\documentclass[oneside]{amsart}

\usepackage{amsmath,amssymb,amsthm}
\usepackage[width=0.75\paperwidth]{geometry}
\usepackage[
    colorlinks=true,
    linkcolor=red,
    citecolor=green,
    urlcolor=blue
]{hyperref}

\newcommand{\E}{\mathbb{E}}
\newcommand{\R}{\mathbb{R}}
\newcommand{\bV}{\mathbf{V}}
\newcommand{\cW}{\mathcal{W}}

\newcommand{\Id}{\operatorname{Id}}
\newcommand{\p}{\partial}
\newcommand{\ud}{\,\mathrm{d}}

\numberwithin{equation}{section}

\newtheorem{theo}{Theorem}[section]
\newtheorem{prop}[theo]{Proposition}
\newtheorem{lem}[theo]{Lemma}
\newtheorem{cor}[theo]{Corollary}
\newtheorem{exa}[theo]{Example}
\newtheorem{rem}[theo]{Remark}
\begin{document}

\author[X Feng]{Xuanrui Feng}
\address{School of Mathematical Sciences, Peking University, Beijing 100871, China.}
\email{pkufengxuanrui@stu.pku.edu.cn}

\author[C Liu]{Chenguang Liu}
\address{Technische Universiteit Delft, Mekelweg 5, 2628 CD Delft, Netherlands}
\email{liucg92@gmail.com}

\author[Z Wang]{Zhenfu Wang}
\address{Beijing International Center for Mathematical Research, Peking University, Beijing 100871, China}
\email{zwang@bicmr.pku.edu.cn}

\title[Uniform-in-time propagation of chaos]
{Uniform-in-time relative entropy estimates for Kac's approximation of the Landau equation}
\subjclass[2020]{35Q35, 60K35, 82C40, 82C22}
\keywords{Kac's program, propagation of chaos, Landau equation, Maxwellian molecules, relative entropy, uniform-in-time estimates}
\date{\today}

\begin{abstract}
    We prove uniform-in-time quantitative relative entropy estimates for Kac's particle approximation of the three-dimensional spatially homogeneous Landau equation for Maxwellian molecules. Using the relative entropy method and logarithmic derivative estimates, we obtain a finite-time normalized entropy bound of order $N^{-1/2}$. The proof relies on explicit covariance identities and a new law-of-large-numbers estimate for the particle system using a new duality method. The uniform-in-time propagation of chaos is derived from an interpolation with an algebraic relaxation estimate toward equilibrium. 
\end{abstract}

\maketitle
\enlargethispage{6pt}
\tableofcontents

\section{Introduction}

\subsection{Landau equation for Maxwellian molecules}

Consider the spatially homogeneous Landau equation for Maxwellian molecules in dimension three:
\begin{equation}\label{eq:landau-equation}
    \p_t f=\int_{\R^3} (\nabla_v-\nabla_w) \cdot \Big( a(v-w) \cdot (\nabla_v-\nabla_w) (f(v)f(w) ) \Big) \ud w,
\end{equation}
where the coefficient matrix $a(z)=|z|^2 \Id-z \otimes z$ is non-negative definite. 
Using convolution expressions and integration by parts, we rewrite it into the more commonly used divergence form:
\begin{equation}\label{eq: div-landau-equation}
    \p_t f=\nabla \cdot \Big( (a \ast f) \cdot \nabla f-(b \ast f)f \Big),
\end{equation}
or non-divergence form:
\begin{equation*}
    \p_t f=(a \ast f) : \nabla^2 f-(c \ast f)f,
\end{equation*}
where a direct computation gives the vector $b(z)=\nabla \cdot a(z)=-2z$ and the scalar $c(z)=\nabla \cdot b(z)=-6$. Denote the initial density by $f_0$. We normalize the initial data by assuming
\begin{equation}\label{eq:normalization-condition}
    \int_{\R^3} f_0 (v) \ud v=1, \quad \int_{\R^3} vf_0 (v) \ud v=0, \quad  \int_{\R^3} |v|^2 f_0 (v) \ud v =3,
\end{equation}
and by the conservation of mass, momentum and kinetic energy, these conditions also hold for any $t \geq 0$. For convenience, we define the $k$-th order moment of $f_0$:
\begin{align*}
    M_k= \int_{\R^3} |v|^kf_0(v)\ud v.
\end{align*}
 We also define the covariance matrix of $f$ at any time $t$:
\begin{equation*}
    E(t)=\int_{\R^3} v \otimes v f_t(v) \ud v.
\end{equation*}
After choosing an orthogonal basis that diagonalizes the symmetric matrix $E(0)$, we write
\begin{equation*}
    E(0)=\Id+\operatorname{diag}(D_1,D_2,D_3).
\end{equation*}
The energy normalization implies that $\displaystyle \sum_{\alpha=1}^3 D_\alpha=0$ and $D_\alpha \in [-1,2]$. A direct computation shows that the covariance matrix has the explicit form
\begin{equation*}
    E(t)=\Id+e^{-12t}\operatorname{diag}(D_1,D_2,D_3),
\end{equation*}
and we refer to Villani \cite{villani1998spatially} for this classical result. With these assumptions, we compute the explicit coefficients in the equation:
\begin{equation*}
    a \ast f(v)=(|v|^2\Id-v \otimes v)+(3\Id-E(t)), \quad b \ast f(v)=-2v, \quad c \ast f(v)=-6.
\end{equation*}
The well-posedness of the Landau equation for Maxwellian molecules was established in Villani \cite{villani1998spatially}. More generally, one considers a wider range of potentials
\begin{equation*}
    a_\gamma(z)=|z|^\gamma (|z|^2 \Id-z \otimes z), \quad \gamma\in[-3,1].
\end{equation*}
The critical exponent $\gamma=-3$ corresponds to Coulomb interactions in Landau's plasma model \cite{landau1936kinetische}. The ranges $\gamma\in(0,1]$, $\gamma\in[-2,0)$, and $\gamma\in[-3,-2)$ are usually called hard, moderately soft, and very soft potentials, respectively. Relevant regularity and well-posedness results include \cite{desvillettes2000spatially1,desvillettes2000spatially2,silvestre2017upper,guillen2025landau2}. Here we restrict attention to Maxwellian molecules, $\gamma=0$, whose explicit covariance dynamics is essential to our argument.

\subsection{Kac's system for the Landau equation}

We are interested in the derivation of the Landau equation from a many-particle system. In this article, our starting point is Kac's system for the Landau equation, described by the following Landau master equation
\begin{equation}\label{eq:master-equation}
    \p_t F_N=\frac{1}{N} \sum_{1 \leq i<j \leq N} (\nabla_{v^i}-\nabla_{v^j}) \cdot \Big( a(v^i-v^j) \cdot (\nabla_{v^i}-\nabla_{v^j}) F_N \Big).
\end{equation}
The initial value is chosen to be fully factorized: $F_N(0)=f_0^{\otimes N}$. The weak formulation of the Landau master equation is given by
\begin{equation*}
    \begin{aligned}
        \p_t \langle F_N, \varphi \rangle&=\frac{1}{N}\sum_{i,j=1}^N \int_{\R^{3N}} b(v^i-v^j) \cdot (\nabla_{v^i} \varphi -\nabla_{v^j} \varphi) F_N \ud V\\
        &+\frac{1}{2N} \sum_{i,j=1}^N \int_{\R^{3N}} a(v^i-v^j):
        \Big(\nabla_{v^iv^i}^2 \varphi-\nabla_{v^iv^j}^2 \varphi-\nabla_{v^jv^i}^2 \varphi+\nabla_{v^jv^j}^2 \varphi\Big)F_N \ud V,
    \end{aligned}
\end{equation*}
where $\ud V=\ud v^1 \cdots \ud v^N$ and $\varphi \in C_b^2(\R^{3N})$ is any test function whose derivatives up to second order are continuous and bounded. 

The master equation has the following conservative stochastic representation. Let $V^i(t)$ be the velocity of particle $i$. Consider the particle system whose evolution is governed by the following SDE:
\begin{equation}\label{eq:particle-sde}
    \ud V^i(t)=\frac{2}{N}\sum_{j=1}^N b \left( V^i(t)-V^j(t) \right) \ud t +\frac{\sqrt{2}}{\sqrt{N}} \sum_{j=1}^N \sigma \left( V^i(t)-V^j(t) \right) \ud B_t^{i,j}.
\end{equation}
The particle system was first introduced by Carrapatoso \cite{carrapatoso2016propagation}. Here the diffusion coefficient is given by a square root matrix of $a(z)$:
\begin{equation*}
    \sigma(z)=|z|\Pi(z), \quad \Pi(z)=\Id-\frac{z\otimes z}{|z|^2} \quad (z\ne0), \quad \sigma(0)=0.
\end{equation*}
Thus $\sigma(z)\sigma(z)^\top=a(z)$ and $\sigma(-z)=\sigma(z)$. For $i<j$, the processes $B^{i,j}$ are independent three-dimensional standard Brownian motions, and let $B^{j,i}=-B^{i,j}$ and $B^{i,i}=0$ in the sense of antisymmetry. By It\^o's formula, the joint law of $\mathbf{V}(t)= (V^1(t),\cdots,V^N(t))$ satisfies the Landau master equation \eqref{eq:master-equation}. Moreover, the system \eqref{eq:particle-sde} conserves total momentum and kinetic energy pointwise. The corresponding normalized quantities are defined by
\begin{equation}\label{eq:particle-conservation}
    m_N=\frac{1}{N} \sum_{i=1}^N V^i(t)=\frac{1}{N} \sum_{i=1}^N V^i(0), \quad e_N=\frac{1}{N} \sum_{i=1}^N |V^i(t)|^2=\frac{1}{N} \sum_{i=1}^N |V^i(0)|^2.
\end{equation}
These identities are used repeatedly in the law-of-large-numbers estimates below and hold almost surely. 

\subsection{Main results}

Denote by $F_N(t)$ the joint law of Kac's system \eqref{eq:master-equation} at time $t$, and  by $F_{N,k}(t)$ its $k$-th order marginal. We write $f_t^{\otimes N}$ for the tensorization of the solution $f_t$ of the Landau equation \eqref{eq:landau-equation}. The normalized relative entropy is defined by
\begin{equation*}
    H_N \left( F_N(t)|f_t^{\otimes N} \right)=\frac{1}{N} \int_{\R^{3N}} F_N(t) \log \frac{F_N(t)}{f_t^{\otimes N}} \ud V.
\end{equation*}

Our first main result is a quantitative finite time propagation of chaos estimate in the sense of relative entropy. Unless otherwise stated, constants denoted by $C$ may change from line to line and depend on the initial constants and norms but are independent of $N$ and $t$. The explicit dependence is given in the theorem.

\begin{theo}[Finite time relative entropy estimate]\label{the:finite-time-entropy}
    Let $F_N$ be an entropy solution to \eqref{eq:master-equation}, and let $f$ be the smooth classical solution to \eqref{eq:landau-equation}, with initial data $F_N(0)=f_0^{\otimes N}$ for some probability density $f_0 \in L^1 \cap L^\infty(\R^3)$.
    Assume that $f_0$ has a Gaussian upper bound
    \begin{equation}\label{con:gaussian-upper-bound}
        f_0(v) \leq L_0 e^{-L_0^{-1}|v|^2}
    \end{equation}
    for some constant $L_0$, and satisfies the logarithmic derivative estimates
    \begin{equation}\label{con:logarithmic-derivative}
        |\nabla^j \log f_0(v)| \leq L_j(1+|v|)^j
    \end{equation}
    for $j=1,2,3$ for some constants $L_1,L_2,L_3$. Then for any $T>0$, there exists a constant $M>0$ which depends on the constants $L_0, L_1, L_2, L_3$ but is independent of $N$ and $T$, such that
    \begin{equation*}
        \sup_{0 \leq t \leq T} H_N \left( F_N(t)| f_t^{\otimes N} \right) \leq \frac{M(1+T)^5}{\sqrt N}.
    \end{equation*}
\end{theo}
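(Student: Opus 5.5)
We will run the relative entropy method of Jabin--Wang type, adapted to the second-order collisional structure. The first step is to compute $\frac{\ud}{\ud t} H_N(F_N|f_t^{\otimes N})$ from the master equation \eqref{eq:master-equation} and the Landau equation \eqref{eq: div-landau-equation}. For this we write the limit equation for $\bar F_N=f_t^{\otimes N}$ as an $N$-particle equation with an error. Since $a\ast f_t$ and $b\ast f_t$ are explicit, namely $a\ast f(v)=|v|^2\Id-v\otimes v+3\Id-E(t)$ and $b\ast f=-2v$, the tensorized density solves
\begin{equation*}
\p_t \bar F_N=\sum_i \nabla_{v^i}\cdot\big((a\ast f)(v^i)\nabla_{v^i}\bar F_N-(b\ast f)(v^i)\bar F_N\big).
\end{equation*}
The particle generator instead uses the empirical averages $\frac1N\sum_j a(v^i-v^j)$ and $\frac1N\sum_j b(v^i-v^j)$, together with cross terms $\nabla_{v^i}\nabla_{v^j}$.

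The standard computation, in the entropy-solution sense, gives
\begin{equation*}
\frac{\ud}{\ud t}H_N\le -\frac{1}{N}\,(\text{relative Fisher-type dissipation})+\frac1N\int F_N\,\mathcal{R}_N(V)\ud V .
\end{equation*}
The remainder $\mathcal R_N$ is built from the differences
\begin{equation*}
\frac1N\sum_j a(v^i-v^j)-a\ast f(v^i),\qquad \frac1N\sum_j b(v^i-v^j)-b\ast f(v^i),
\end{equation*}
contracted against $\nabla\log f$ and $\nabla^2\log f$ and their products. It also contains the cross terms $\frac1N\sum_{i,j}a(v^i-v^j):\nabla\log f(v^i)\otimes\nabla\log f(v^j)$.

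The key simplification for Maxwellian molecules is that $a$ is quadratic. Hence
\begin{equation*}
\frac1N\sum_j a(v^i-v^j)-a\ast f(v^i)=Q(v^i;\,m_N-0,\,\tfrac1N\sum_j v^j\otimes v^j-E(t)),
\end{equation*}
which is an affine expression in $v^i$ times the fluctuations of the empirical first moment and empirical covariance. The $b$-part reduces to $m_N$ alone. Using the conservation laws \eqref{eq:particle-conservation}, $m_N$ and $e_N$ are frozen at their initial values, and their fluctuations are $O(N^{-1/2})$ under $f_0^{\otimes N}$ with Gaussian tails. This leaves the traceless part of the empirical covariance, $\frac1N\sum_j V^j\otimes V^j-E(t)$, as the only genuinely dynamic fluctuation.

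We need the logarithmic derivative bounds $|\nabla^j\log f_t|\le C(1+t)^{k_j}(1+|v|)^j$ for $j=1,2,3$, propagated from \eqref{con:logarithmic-derivative}. With these, and after absorbing the gradient terms into the dissipation, the remainder is bounded by
\begin{equation*}
\frac CN\sum_i (1+t)^{k}(1+|V^i|)^4\,|\Delta_N|,
\end{equation*}
where $\Delta_N$ collects the moment fluctuations, plus a polynomially weighted cross term.

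The step we expect to be hardest is the law-of-large-numbers estimate $\E|\frac1N\sum_j V^j\otimes V^j-E(t)|^2\le C(1+t)^p/N$ for the particle system itself, not for i.i.d. samples. Our first attempt is a direct It\^o computation on $S_N(t)=\frac1N\sum_j V^j\otimes V^j$. Because $a,b$ are quadratic and momentum and energy are conserved, the drift of $S_N$ is the closed linear relaxation $-12(S_N-\text{isotropic part})$ plus terms involving $m_N$. The martingale has quadratic variation $O(1/N)\cdot\frac1N\sum_j|V^j|^4$. This mirrors the explicit covariance identity $E(t)=\Id+e^{-12t}\operatorname{diag}(D_\alpha)$, and Gr\"onwall then gives the bound, provided uniform fourth moments of $F_N$ are available. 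Those moments also follow by It\^o with energy conservation.

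If that direct route fails, the fallback is a duality argument: test the master equation against functions of $S_N-E(t)$ solving a backward equation. The cross terms $\sum_{i\ne j}$ are handled similarly, since their expectation under $F_N$ is a product of sums $\frac1N\sum_j\nabla\log f(V^j)$ weighted by quadratic moments, again controlled by these LLN estimates.

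To close, we apply Cauchy--Schwarz to pass from $\E[\frac1N\sum_i(1+|V^i|)^4|\Delta_N|]$ to $(\E\Delta_N^2)^{1/2}\le C(1+t)^{p/2}N^{-1/2}$, without invoking any large-deviation entropy inequality. Integrating in time then yields $H_N(t)\le M(1+T)^5N^{-1/2}$. The polynomial power of $T$ comes from the growth of the log-derivative bounds and the moment estimates, and the initial relative entropy vanishes because $F_N(0)=f_0^{\otimes N}$.
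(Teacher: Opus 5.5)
There is a genuine gap, and it sits in the cross term, which you dispatch in one sentence. Your handling of the terms where $\frac1N\sum_j a(v^i-v^j)-a\ast f_t(v^i)$ and $\frac1N\sum_j b(v^i-v^j)-b\ast f_t(v^i)$ are contracted against $\nabla^2 f_t/f_t$ and $\nabla\log f_t$ is sound. Your direct It\^o computation for the empirical covariance is exactly Lemma \ref{lem:entrywise_covariance_fluctuation}.

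After subtracting its mean-field part $\bar V^3_f(v)=2v\cdot\nabla\log f_t(v)$, the cross term equals
\[
-\frac1N\sum_{i=1}^N \nabla\log f_t(v^i)\cdot\Big(\frac1N\sum_{j=1}^N a(v^i-v^j)\nabla\log f_t(v^j)-a\ast(f_t\nabla\log f_t)(v^i)\Big).
\]
Expanding the quadratic $a$ shows the bracket is a polynomial in $v^i$ whose coefficients are $\langle\mu_N(t)-f_t,\psi\rangle$, with $\psi\in\{p_\alpha,\,v_\gamma p_\alpha,\,v_\gamma v_\eta p_\alpha\}$ and $p=\nabla\log f_t$.

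These are not moment fluctuations. The mean-field values $\langle f_t,\psi\rangle$ reduce to moments of $f_t$ only by integrating by parts against $f_t$, and that has no analogue for the empirical measure. So conservation of $m_N,e_N$ and the covariance LLN say nothing about, e.g., $\frac1N\sum_j\nabla\log f_t(V^j(t))$. A direct It\^o computation on $\frac1N\sum_j\psi(V^j(t))$ does not close either: applying the generator to the time-dependent score produces new non-polynomial functions of $f_t$, giving an infinite hierarchy.

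If "absorbing the gradient terms into the dissipation" is meant to dispose of this term, that fails too. Cauchy--Schwarz against $\mathcal D_N$ leaves $\frac{1}{N^2}\sum_{i<j}\int F_N|\sigma(v^i-v^j)(\nabla\log f_t(v^i)-\nabla\log f_t(v^j))|^2$, which is of order one. Smallness comes only from the cancellation with $\p_t\log f_t^{\otimes N}$, i.e.\ from an LLN for exactly these score-weighted $\psi$. Your fallback duality on functions of $S_N-E(t)$ targets the covariance, which you already control, not this. So the step you flag as hardest is the easy one, and the core of the proof is missing.

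The paper fills this with a duality argument at the level of the limit equation (Lemmas \ref{lem:polynomial_backward_estimates} and \ref{lem:score_weighted_moment_lln}). It solves $\p_s u_s=-(a\ast f_s):\nabla^2u_s-2(b\ast f_s)\cdot\nabla u_s$ on $[0,t]$ with $u_t=\psi$, represented as $u_s(v)=\E\,\psi(X^{s,v}_t)$ for an SDE that is affine in the state with contracting Jacobian. Hence $|\nabla^r u_s|$ keeps the polynomial growth of $\nabla^r\psi$, and $\int u_sf_s$ is conserved. It\^o's formula then gives
\[
\langle\mu_N(t)-f_t,\psi\rangle=\langle\mu_N(0)-f_0,u_0\rangle+\int_0^t\big\langle\mu_N(s),(a\ast\mu_N(s)-a\ast f_s):\nabla^2u_s+2(b\ast\mu_N(s)-b\ast f_s)\cdot\nabla u_s\big\rangle\ud s+\mathcal M_t.
\]
Each piece is then controllable:
\begin{itemize}
\item the first term is an i.i.d.\ fluctuation;
\item the drift involves only $m_N$, $e_N-3$ and $Q-E$, so your moment LLN now applies;
\item $\mathcal M_t$ is $O(N^{-1/2})$ by BDG.
\end{itemize}
This needs $\nabla^2\psi$, hence $\nabla^3\log f_t$, which is why $j=3$ is in the hypotheses. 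In your plan the third-order bound is never used, a symptom of the missing step. Finally, you take the propagation of the logarithmic derivative bounds as given. The paper proves it (Theorem \ref{the:logarithmic_derivative}) by a Bernstein-type maximum-principle argument, a further nontrivial ingredient you would need to supply.
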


The proof of Theorem \ref{the:finite-time-entropy} starts with the standard time evolution of relative entropy computation, which is similar to that in \cite{carrillo2025relative}. However, because the master equation for Kac's system differs from that of the Nanbu-type particle system, we need to establish a new law-of-large-numbers estimate for test functions involving the score function $\nabla \log f$ and the Landau coefficients. Since the score function has no explicit polynomial structure, we cannot explicitly expand the relevant terms or compute their exact time evolution as in \cite{carrillo2025relative}.

The main technical contributions in this article are summarized in Lemma \ref{lem:polynomial_backward_estimates} and Lemma \ref{lem:score_weighted_moment_lln}. We develop a duality method at the level of a dual semigroup of the limiting equation, which has a nice structure due to the regularity of $f$. Therefore, we transfer the law-of-large-numbers estimate at time $t$ with a given test function $\psi$ to that estimate at initial time $t=0$ with a time-dependent backward solution $u_s$ as the test function. The result then follows from the initial chaotic assumption and the nice properties of $u_s$, namely the polynomial growth estimate. This also requires a proof of logarithmic derivative estimates up to third order, and we  prove it in Theorem \ref{the:logarithmic_derivative} up to any finite order as a key technical novelty. We expect that our new duality method would be of wide use in quantitative propagation of chaos for singular systems. 

\begin{rem}
    The first-order derivative bound in \eqref{con:logarithmic-derivative} implies a Gaussian lower bound for $f_0$, see also \cite[Proposition 3.2]{carrillo2025relative} for instance. More precisely, there exists a constant $c_0>0$ depending on $L_1$ such that 
    \begin{equation}\label{eq:initial-gaussian-bounds}
        f_0(v) \geq c_0 e^{-c_0^{-1} |v|^2}.
    \end{equation}
    By \cite[Section 7]{villani1998spatially}, such a Gaussian lower bound holds for any time $t \geq 0$, under possible modification of the positive constant $c_0>0$. For simplicity, we use \eqref{eq:initial-gaussian-bounds} for any time $t \geq 0$.
    Moreover, \eqref{con:logarithmic-derivative} also implies a finite Fisher information for $f_0$: 
    \begin{align*}
        \int_{\R^3} f_0 |\nabla \log f_0(v)|^2 \ud v\le L_1 \int_{\R^3} (1+|v|^2) f_0(v)  \ud v <\infty.
    \end{align*}
\end{rem}

\begin{rem}
    The time dependence in Theorem \ref{the:finite-time-entropy} has polynomial growth of order $T^5$. This growth rate comes from the logarithmic estimates in Theorem \ref{the:logarithmic_derivative} below and repeated use of the Cauchy--Schwarz inequality in time integral. A more careful computation or a preservation of the exponential term in the estimate of $|D_v X_r^{s,v}|$ in Step 2 of Lemma \ref{lem:polynomial_backward_estimates} may provide a better rate. We do not pursue an optimization of this rate here, since nevertheless we would obtain a uniform-in-time convergence easily in the next theorems.
\end{rem}

Our second main result is to upgrade the finite-time convergence result to a uniform-in-time version. This is formulated by the following uniform-in-time relative entropy estimate.

\begin{theo}[Uniform-in-time relative entropy estimate]\label{the:time_uniform}
    Assume the hypotheses of Theorem \ref{the:finite-time-entropy}. Assume in addition that $f_0$ is $L^2$ relative to the standard Gaussian distribution:
    \begin{equation}\label{eq:relative_L2_initial}
        A_0=\int_{\R^3} \left( \frac{f_0}{\gamma}-1 \right)^2 \gamma \ud v<\infty, \quad \gamma(v)=(2\pi)^{-3/2} e^{-|v|^2/2}.
    \end{equation}
    Then for any $\delta>0$, there exists a constant $C_\delta>0$, depending on $\delta$ and the initial bounds $A_0, L_0, L_1, L_2, L_3$, such that
    \begin{equation*}
        H_N \left( F_N(t)|f_t^{\otimes N} \right) \leq C_\delta \left( \frac{1}{(1+t)^\delta}+\frac{1}{\sqrt N} \right),
    \end{equation*}
    for any $t \geq 0$.
\end{theo}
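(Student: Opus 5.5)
The estimate follows by interpolating between the finite-time bound of Theorem \ref{the:finite-time-entropy}, which is good for $t\lesssim N^{1/(10)}$, and a relaxation-to-equilibrium argument, which is good for large $t$. The common reference point is the standard Gaussian $\gamma$. Under the normalization \eqref{eq:normalization-condition}, $\gamma$ is the unique equilibrium of \eqref{eq:landau-equation}. The measure $\gamma^{\otimes N}$ is not invariant for the master equation on all of $\R^{3N}$, but the uniform measure on the conservation manifold $\{m_N=0,\ e_N=3\}$ is. So I would compare $F_N(t)$ with $\gamma^{\otimes N}$ only up to $O(1/N)$ corrections coming from the fluctuations of $(m_N,e_N)$ under $f_0^{\otimes N}$.

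\textbf{Step 1 (relaxation of the limit).} Using \eqref{eq:relative_L2_initial}, I would show that $\|f_t/\gamma-1\|_{L^2(\gamma)}$ decays, at least algebraically and presumably exponentially, since for Maxwellian molecules the linearized Landau operator has a spectral gap. A cheaper route that only needs $(1+t)^{-\delta}$ is to combine the Fisher-information/entropy decay of Villani for Maxwellian molecules with uniform moment and regularity bounds. Either way this yields
\[
H(f_t|\gamma)\le C(1+t)^{-\delta'},
\]
with $\delta'$ arbitrarily large. This is the ``algebraic relaxation estimate'' mentioned in the abstract.

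\textbf{Step 2 (relaxation of the particle system).} Here I would bound $H_N(F_N(t)|\gamma^{\otimes N})$ uniformly in $N$ and show it decays algebraically up to an $O(N^{-1/2})$ error. The plan:
\begin{itemize}
\item Decompose relative to the conserved quantities: write $F_N(t)$ as a mixture over $(m_N,e_N)$ of laws on the spheres $S_{m,e}$. Since $(m_N,e_N)$ is frozen, its law equals its law under $f_0^{\otimes N}$.
\item On each sphere, use an entropy-entropy production inequality for Kac's Landau generator. The Maxwellian case is tractable: Carrapatoso-type estimates give an $N$-uniform gap, or at least an algebraic entropy decay obtained by interpolating the Fisher information with moment bounds. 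Together with the initial normalized entropy bound from $f_0\in L^\infty$ and the Gaussian tail \eqref{con:gaussian-upper-bound}, this should give decay like $(1+t)^{-\delta}$.
\item Pass from the uniform law on spheres to $\gamma^{\otimes N}$, which costs $O(1/N)$ in normalized entropy, plus the normalized entropy of the law of $(m_N,e_N)$, which is $O(\log N/N)$ by the central limit theorem.
\end{itemize}

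\textbf{Step 3 (combination).} Since $f_t^{\otimes N}$ is log-concave-like only asymptotically, I would avoid a triangle inequality for entropy and work directly. Expand
\[
H_N(F_N|f_t^{\otimes N})=H_N(F_N|\gamma^{\otimes N})+\frac1N\int F_N\sum_i\log\frac{\gamma}{f_t}(v^i).
\]
The last term equals $\int F_{N,1}\log(\gamma/f_t)$. Using the Gaussian lower bound \eqref{eq:initial-gaussian-bounds} and the logarithmic derivative bounds, $|\log(\gamma/f_t)|\le C(1+|v|^2)$ and it tends to $0$ as $f_t\to\gamma$. Interpolating the Step 1 decay with uniform moments of $F_{N,1}$ (which are controlled by energy conservation and the Gaussian tails) bounds this term by $C(1+t)^{-\delta}$. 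Together with Step 2 this gives $C((1+t)^{-\delta}+N^{-1/2})$ for $t\ge T_N$.

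For $t\le T_N:=N^{1/10}$, Theorem \ref{the:finite-time-entropy} gives $M(1+t)^5N^{-1/2}$, which is not uniform in $t$. I would fix the mismatch by choosing exponents: given $\delta$, apply Step 2 with decay rate $\delta$. For $t\ge N^{\epsilon}$ the bound $(1+t)^{-\delta}$ is fine; for $t< N^{\epsilon}$ use the finite-time bound, which gives at most $N^{5\epsilon-1/2}$. This route produces $C_\delta((1+t)^{-\delta}+N^{-1/2+5\epsilon})$. To recover exactly $N^{-1/2}$, note that $\sqrt N\,(1+t)^{-\delta}\ge 1$ whenever $t\le N^{1/(2\delta)}$. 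So on that range the finite-time bound $(1+t)^5/\sqrt N\le (1+t)^{5-\delta}$ is dominated by $C(1+t)^{-\delta}$ once the exponents are arranged, i.e. we apply the large-time decay with $\delta+5$ in place of $\delta$.

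\textbf{Main obstacle.} The hardest step is Step 2: obtaining an $N$-uniform algebraic entropy decay for the Kac–Landau master equation. The degenerate matrix $a$ and the conservation constraints make the entropy production only controllable through moment interpolation. I would first try a Carrapatoso-style spectral gap on the sphere combined with entropy–Fisher interpolation.
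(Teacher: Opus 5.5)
Your decomposition $H_N(F_N|f_t^{\otimes N})=H_N(F_N|\gamma_N)+\int F_{N,1}\log(\gamma/f_t)$ is the paper's, and your Step 1 matches its use of the $L^2(\gamma^{-1})$ contraction for the limit equation. The genuine gap is Step 2, which carries the whole difficulty: the mechanisms you propose do not give an $N$-uniform decay of the normalized entropy.

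An $N$-uniform $L^2$ spectral gap is useless here. For chaotic data the $L^2$ distance to equilibrium is exponentially large in $N$; for instance $\|f_0^{\otimes N}/\gamma_N\|_{L^2(\gamma_N)}^2=(1+A_0)^N$. Via $H\le\log\|F/\mu\|^2_{L^2(\mu)}$, a gap $\lambda$ therefore only yields $H_N\lesssim\frac1N\log\bigl(1+C^Ne^{-2\lambda t}\bigr)$, which is relaxation on the time scale $t\sim N$. What you would really need is an entropy--entropy production inequality for the Kac--Landau generator with $N$-independent constants, either linear or algebraic via moment interpolation. That is a Cercignani-type conjecture at the $N$-particle level. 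No such inequality is known for this model. For the analogous Kac--Boltzmann master equation, the optimal linear constant is known to degenerate essentially like $1/N$ (Villani, Einav). Interpolating Fisher information against moments cannot remove an $N$-dependence in the underlying constant. As written, nothing in your argument produces $H_N(F_N(t)|\gamma_N)\lesssim(1+t)^{-\delta}+N^{-1/2}$.

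The missing idea is to bypass entropy production entirely. Since $\gamma_N$ is $1$-log-concave, the HWI inequality gives
\begin{equation*}
H_N(F_N|\gamma_N)\le \frac{\cW_2(F_N,\gamma_N)}{\sqrt N}\,I_N(F_N|\gamma_N)^{1/2}.
\end{equation*}
Moreover $I_N(F_N|\gamma_N)=\frac1N I(F_N)-3\le I(f_0)$ uniformly in $t$ and $N$, because the Fisher information is nonincreasing along the master equation \cite{carrillo2025fisher} and the energy is conserved. One then needs only a \emph{Wasserstein} relaxation. Projecting onto the Boltzmann sphere via $V^i\mapsto(V^i-m)/S$ costs $O(N^{-1/2})$ in $N^{-1/2}\cW_2$, coming from the fluctuations of $m$ and $S$. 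On the sphere, the coupling estimate \cite[Theorem 24]{fournier2017kac} gives algebraic decay uniformly in $N$.

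This produces both terms of the bound for all $t\ge0$. Your interpolation with Theorem \ref{the:finite-time-entropy} is therefore unnecessary; the paper uses it only for Corollary \ref{cor:uniform-in-time-poc}.

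Two smaller remarks. First, $\gamma^{\otimes N}$ \emph{is} stationary for \eqref{eq:master-equation}, since $a(v^i-v^j)(v^i-v^j)=0$. The obstruction to $F_N\to\gamma_N$ is non-ergodicity (frozen $m_N,e_N$), not lack of invariance. Second, in the cross term, uniform moments of $F_{N,1}$ alone do not turn the $L^2(\gamma^{-1})$ decay of $f_t$ into decay of $\int F_{N,1}\log(\gamma/f_t)$. The paper adds a uniform $L^2$ bound on $F_{N,1}$ (Sobolev plus Fisher monotonicity) on a ball of radius $R\sim\sqrt t$, and controls the tail with propagated Gaussian moments. With moments only, you would have to prove local uniform convergence of $\log(f_t/\gamma)$ separately, for example from the logarithmic derivative bounds.
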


Interpolating between Theorem \ref{the:finite-time-entropy} and Theorem \ref{the:time_uniform}, we deduce uniform-in-time propagation of chaos in the sense of relative entropy.

\begin{cor}[Uniform-in-time propagation of chaos]\label{cor:uniform-in-time-poc}
    Under the assumptions of Theorem \ref{the:time_uniform}, for any $0<\varepsilon<1/2$, there exists a constant $C_{\varepsilon}>0$, depending on $\varepsilon$ and the initial bounds $A_0, L_0, L_1, L_2, L_3$, such that
    \begin{equation}\label{ineq: entropy uni}
        \sup_{t \geq 0} H_N \left( F_N(t)|f_t^{\otimes N} \right) \leq  C_\varepsilon N^{-1/2+\varepsilon}.
    \end{equation}
     Moreover, for the particle system SDE \eqref{eq:particle-sde}, for any $0<\varepsilon<1/4$, we have
    \begin{equation}\label{ineq: was2}
        \sup_{t \geq 0} \E \bigg[ \mathcal{W}_1 \bigg( \frac 1 N \sum_{i=1}^N \delta_{V^i(t)},f_t \bigg) \bigg] \leq C_\varepsilon N^{-\frac{1}{4}+\varepsilon},
    \end{equation}
    where $\mathcal{W}_1$ is the Wasserstein-1 distance on $\mathcal{P}_2(\R^3)$.
\end{cor}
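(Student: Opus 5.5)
The entropy bound \eqref{ineq: entropy uni} follows by interpolating Theorem \ref{the:finite-time-entropy} with Theorem \ref{the:time_uniform}, using a time threshold $T_N$ chosen depending on $N$. Fix $0<\varepsilon<1/2$ and set $T_N=N^{\theta}$ for some $\theta>0$ to be chosen.

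\textbf{Short times.} For $t\le T_N$, Theorem \ref{the:finite-time-entropy} with $T=T_N$ gives
\[
H_N(F_N(t)|f_t^{\otimes N})\le M(1+N^\theta)^5N^{-1/2}\le 32M\,N^{5\theta-1/2}.
\]
Choosing $\theta=\varepsilon/5$ makes this $\le CN^{-1/2+\varepsilon}$.

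\textbf{Long times.} For $t\ge T_N$, Theorem \ref{the:time_uniform} with a parameter $\delta$ gives
\[
H_N\le C_\delta\big((1+t)^{-\delta}+N^{-1/2}\big)\le C_\delta\big(N^{-\theta\delta}+N^{-1/2}\big).
\]
Taking $\delta=(1/2)/\theta=5/(2\varepsilon)$ yields $N^{-\theta\delta}=N^{-1/2}$. Both regimes are then bounded by $C_\varepsilon N^{-1/2+\varepsilon}$. The constant depends only on $\varepsilon$ through $C_\delta$ and on $A_0,L_0,\dots,L_3$, since $M$ is $T$-independent. This step is routine; the only care needed is that $M$ in Theorem \ref{the:finite-time-entropy} does not depend on $T$, which the statement guarantees.

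\textbf{Wasserstein bound, reduction.} For \eqref{ineq: was2} we pass from relative entropy to the empirical measure. Use the triangle inequality
\[
\E\,\cW_1(\mu_N,f_t)\le \E\,\cW_1(\mu_N,\nu_N)+\ldots
\]
in the following form. By the Donsker--Varadhan variational formula (the entropy inequality), for any measurable functional $\Phi$ of $\bV$ and $\lambda>0$,
\[
\E_{F_N}[\Phi]\le \frac{1}{\lambda}\Big(N H_N(F_N|f_t^{\otimes N})+\log \E_{f_t^{\otimes N}}e^{\lambda\Phi}\Big).
\]
Apply this with $\Phi=\cW_1(\mu_N,f_t)$ and $\lambda=\kappa N$, where $\mu_N$ is the empirical measure. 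This gives $\E_{F_N}\Phi\le \kappa^{-1}H_N+(\kappa N)^{-1}\log\E_{f_t^{\otimes N}}e^{\kappa N\Phi}$.

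\textbf{Wasserstein bound, concentration (main obstacle).} We need a concentration estimate for i.i.d.\ samples of $f_t$, uniform in $t$. Since $f_t$ has a uniform Gaussian upper bound (propagated from \eqref{con:gaussian-upper-bound}, or at least uniform exponential moments through energy conservation and Povzner-type estimates for Maxwellian molecules), it satisfies a $T_1$ transport inequality with a uniform constant by Djellout--Guillin--Wu. Bolley--Guillin--Villani then gives $\log\E e^{\kappa N(\Phi-\E\Phi)}\le C\kappa^2N$ for $\kappa$ small. The i.i.d.\ mean satisfies $\E_{f_t^{\otimes N}}\cW_1(\mu_N,f_t)\le CN^{-1/3}$ in dimension three (Fournier--Guillin, with uniform moments). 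Hence
\[
\E_{F_N}\Phi\le \kappa^{-1}H_N+CN^{-1/3}+C\kappa.
\]
Optimizing with $\kappa=\sqrt{H_N}$ gives $\E_{F_N}\Phi\le C\sqrt{H_N}+CN^{-1/3}\le C N^{-1/4+\varepsilon/2}+CN^{-1/3}$. Relabeling $\varepsilon$ (using the range $\varepsilon<1/4$) yields \eqref{ineq: was2}.

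The main obstacle is securing constants uniform in $t$ for both the $T_1$ inequality and the Fournier--Guillin moment input. I would first try to derive a time-uniform Gaussian upper bound for $f_t$, perhaps available from Villani's analysis or from the backward estimates used earlier. Failing that, I would use the uniform Gaussian-type moment bound $\sup_t\int e^{a|v|^2}f_t<\infty$, which suffices for Djellout--Guillin--Wu.
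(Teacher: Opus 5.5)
Your proposal is correct and follows the paper's proof. For the entropy bound you use the same $N$-dependent splitting of time between Theorem \ref{the:finite-time-entropy} and Theorem \ref{the:time_uniform}, and your choice $T_N=N^{\varepsilon/5}$, $\delta=5/(2\varepsilon)$ is the consistent one, since the paper's $T=N^{\varepsilon/4}-1$ literally gives $N^{-1/2+5\varepsilon/4}$ on short times. For the Wasserstein bound you use the same ingredients: uniform-in-time Gaussian moments of $f_t$ (which the paper takes from Villani's work), sub-Gaussian concentration of $f_t^{\otimes N}$ for functions Lipschitz in $d_N(x,y)=\sum_i|x_i-y_i|$, Donsker--Varadhan with $\kappa=\sqrt{H_N}$, and the Fournier--Guillin $N^{-1/3}$ rate. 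The only difference is organizational: you apply Donsker--Varadhan directly to $\mathcal{W}_1(\mu_N,f_t)$, which is $N^{-1}$-Lipschitz for $d_N$, and import the concentration bound, whereas the paper proves the tensorized sub-Gaussian estimate by hand, turns it into $N^{-1}\mathbf{W}_1(F_N,f_t^{\otimes N})\le C\sqrt{H_N}$, and concludes via an optimal coupling and the triangle inequality.
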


\begin{rem}
    The uniform-in-time quantitative convergence rate in Corollary \ref{cor:uniform-in-time-poc} could be compared with the estimate of Fournier--Guillin \cite[Theorem 4]{fournier2017kac}, who proved uniform-in-time propagation of chaos in Wasserstein-2 distance: for any $0<\varepsilon<1/3$, there exists a constant $C_\varepsilon>0$ depending on $\varepsilon$ and the initial entropy and high-order moments, such that
    \begin{equation*}
        \sup_{t \geq 0} \E \bigg[ \cW_2^2\left( \frac 1 N  \sum_{i=1}^N \delta_{V^i(t)}, f_t \right) \bigg] \leq C_\varepsilon N^{-1/3+\varepsilon}.
    \end{equation*}
    If $f_t$ satisfies a Talagrand $T_2$ inequality with a constant uniform in time (see for instance Villani \cite{villani2021topics}), then our relative entropy estimate would imply such a convergence result in Wasserstein-2 distance with a better rate $N^{-1/2+ \varepsilon}$. Nevertheless, we need to impose stronger initial assumptions \eqref{con:gaussian-upper-bound} and \eqref{con:logarithmic-derivative}, which are used primarily in the proof of our new law-of-large-numbers estimate Proposition \ref{prop:weighted_lln_ap} via our duality argument. On the other hand, if we use the estimate in \cite{fournier2017kac} in place of Proposition \ref{prop:weighted_lln_ap}, the initial assumptions could be relaxed, but the convergence rate will be  worse than our current result.
\end{rem}

\subsection{Discussion on the Nanbu-type system and the duality method}

The program of particle approximation for the Landau equation has been well studied in the past decade. Apart from Kac's system \eqref{eq:particle-sde} studied in this article, we also mention the Nanbu-type particle system introduced by Fournier \cite{fournier2009particle} in a probabilistic sense. The SDE reads
\begin{equation*}
    \ud V^i(t)=\frac{2}{N} \sum_{j=1}^N b \left( V^i(t)-V^j(t) \right) \ud t+\sqrt{2} \bigg( \frac{1}{N} \sum_{j=1}^N a \left( V^i(t)-V^j(t) \right) \bigg)^{1/2} \ud B_t^i, \quad 1 \leq i \leq N,
\end{equation*}
where $B^i$ are independent standard Brownian motions. The corresponding master equation reads
\begin{equation*}
    \partial_t F_N=\sum_{i=1}^N \nabla_{v^i} \cdot \bigg[ \bigg( \frac{1}{N} \sum_{j=1}^N a(v^i-v^j) \bigg) \cdot \nabla_{v^i} F_N-\bigg( \frac{1}{N} \sum_{j=1}^N b(v^i-v^j) \bigg) F_N \bigg],
\end{equation*}
which is different from \eqref{eq:master-equation}. In particular, the Nanbu-type system only preserves momentum and energy in expectation, while Kac's system preserves them pathwise.

The mean-field convergence from the Nanbu-type system to the Landau equation with Maxwellian molecules was studied in \cite{carrillo2025relative} using also the relative entropy method, which in general requires a law-of-large-numbers estimate in the form of
\begin{equation*}
    \E_{F_N(t)} \left| \langle\psi , \mu_N(t)-f_t \rangle \right|^2.
\end{equation*}
In \cite{carrillo2025relative} the test functions are of polynomial forms  with degrees less than or equal to $2$, while in this article we need to deal with various test functions containing the coefficients $a,b$ and the score function $\nabla \log f$, see Proposition \ref{prop:weighted_lln_ap} for details. When $\psi$ is purely polynomial function, the convolution terms can be expanded into explicit polynomials, which allows an explicit computation of time evolution of each term in \cite{carrillo2025relative}. In this article, however, since the score function $\nabla \log f$ has no explicit structure,  a direct computation fails, which inspires us to introduce the duality method in Section \ref{sec:lln} to push everything back to the initial time. This is the main technical contribution of this article.

For the Nanbu-type system, since there is no pathwise conservation of momentum and energy, the computation of time evolution of moments of every order in \cite{carrillo2025relative} is quite complicated. In this subsection, we briefly sketch an alternative duality method to simplify the explicit computation and provide a new point of view for such estimates.

For any test function $\psi$, we define the bilinear form operator
\begin{equation*}
    \mathcal{B}_\psi(\mu, \nu)=\iint K_\psi (v,w) \mu(\ud v) \nu( \ud w)
\end{equation*}
for any signed measures $\mu$ and $\nu$, where the kernel function
\begin{equation*}
    K_\psi(v,w)=a(v-w):\nabla^2 \psi(v)+2b(v-w) \cdot \nabla \psi(v).
\end{equation*}
With these notations, we have
\begin{equation*}
    \frac{\ud}{\ud t} \E_{F_N(t)} \langle \mu_N(t), \psi \rangle=\E_{F_N(t)} \mathcal{B}_\psi (\mu_N(t), \mu_N(t)), \quad \frac{\ud}{\ud t} \langle f_t, \psi \rangle=\mathcal{B}_\psi (f_t,f_t).
\end{equation*}
Now we define the dual backward equation by
\begin{equation*}
    \partial_s u_s+\mathcal{L}_f^\ast u_s=0, \quad u_t=\psi,
\end{equation*}
where the linear operator $\mathcal{L}_f^\ast$ is given by
\begin{equation*}
    (\mathcal{L}_f^\ast \phi)(v)=\int K_\phi(v,w) f(w) \ud w+\int K_\phi(w,v) f(w) \ud w.
\end{equation*}
This well-constructed dual backward solution $u_s$ preserves the quadratic form of the test function $\psi$ in the following sense: if $\psi(v)=v^{\mathsf{T}} Hv$ for some symmetric constant matrix $H$, then there exists a decomposition
\begin{equation*}
    u_s(v)=v^{\mathsf{T}} H_s v+c_s,
\end{equation*}
where $H_s$ is a time-dependent symmetric constant matrix with $|H_s| \leq C|H|$ and $c_s$ is a constant. The explicit computation follows from a straightforward computation and is omitted here since it is not the main goal of this article.

With this explicit formula, we can proceed as in Lemma \ref{lem:score_weighted_moment_lln} of this article by rewriting the target term by duality:
\begin{equation*}
    R_\psi^N(t)=\langle \psi, \mu_N(t)-f_t \rangle=\langle u_0, \mu_N(0)-f_0 \rangle+\int_0^t \ud \langle u_s, \mu_N(s)-f_s \rangle.
\end{equation*}
The first term is estimated at the initial time $t=0$ and the law-of-large-numbers result comes from the initial chaotic assumption, and the second term is estimated by It\^o's formula and explicit decomposition of $u_s$. This routine therefore somehow unifies the complicated computation for various moment terms in \cite{carrillo2025relative} into one single duality framework.

\subsection{Related works}

The program of deriving kinetic equations from stochastic particle systems via mean-field limits was initiated by Kac in his seminal article \cite{kac1956foundations}, where he studied his caricature of the one-dimensional Boltzmann equation. The classical probabilistic formulation of propagation of chaos can be found for instance in McKean \cite{mckean1967propagation} and Sznitmann \cite{sznitman1991topics}, and more recent progress on Kac's program was shown  for instance in Mischler--Mouhot \cite{mischler2013kac} and Hauray--Mischler \cite{hauray2014kac}. 

Entropy, Fisher information, and relative entropy serve as central tools  for proving singular mean-field limits and propagation of chaos recently. At the qualitative level, Hauray and Mischler \cite{hauray2014kac} developed the theory of entropic and Fisher-information chaos, while Fournier, Hauray, and Mischler \cite{fournier2014propagation} used entropy and Fisher-information compactness to establish propagation of chaos for singular stochastic vortex systems. At the quantitative level, Jabin and Wang \cite{jabin2016mean,jabin2018quantitative} developed a relative-entropy method that yields explicit propagation-of-chaos estimates for stochastic systems with bounded or $W^{-1,\infty}$ interaction kernels. Carrillo, Feng, Guo, Jabin, and Wang \cite{carrillo2025relative} subsequently adapted this method to the Landau particle system for Maxwellian molecules, directly comparing its $N$-particle law with the tensorized solution of the Landau equation. Uniform-in-time propagation-of-chaos estimates for singular stochastic systems have also been established; see, for example, Guillin, Le Bris, and Monmarch\'e \cite{guillin2024uniform}.

For the spatially homogeneous Landau equation, particle approximations and propagation of chaos have been obtained by several complementary methods. Early probabilistic and numerical particle approximations include Fontbona--Gu\'erin--M\'el\'eard \cite{fontbona2009measurability} and Fournier \cite{fournier2009particle}; compactness and hierarchical arguments for Kac's model for the Landau equation go back in particular to Miot--Pulvirenti--Saffirio \cite{miot2011kac}. See also the work by Carrillo--Guo \cite{carrillo2025fisher}. For Maxwellian molecules, Carrapatoso \cite{carrapatoso2016propagation} proved quantitative uniform-in-time propagation of chaos in the Wasserstein-$1$ distance, as well as entropic chaos for the dynamics on the centered fixed-energy Boltzmann sphere. For hard potentials and Maxwellian molecules, Fournier--Guillin \cite{fournier2017kac} obtained quantitative convergence from a Kac-like particle system to the Landau equation, while for moderately soft potentials the qualitative and partially quantitative theory was developed in Fournier--Hauray \cite{fournier2016propagation}.

The regimes of very soft potentials and Coulomb interactions require substantially different ideas because of the strong singularity and degeneracy of the Landau diffusion matrix. Recent progress was made by Feng--Wang \cite{feng2026kac}, who treat the full range of interaction potentials, including the Coulomb interactions, by a duality and cluster-expansion approach together with new functional estimates. In parallel, Tabary \cite{tabary2025propagation} treated very soft potentials and Coulomb interactions by a compactness-uniqueness method based on entropy production and higher-order Fisher-information estimates.

\subsection{Outline of the article}

The rest of this article is organized as follows. In Section \ref{sec:logarithmic_derivatives} we prove a new logarithmic derivative estimate of any finite order, which is one of the key technical novelties of this article. Section \ref{sec:entropy_evolution} derives the evolution of the relative entropy, which reduces the proof of our main theorems into the proof of several law of large numbers estimates. We also present some preliminary estimates. Section \ref{sec:lln} proves a new law-of-large-numbers estimate for a large class of unbounded test functions by introducing a new duality method, which is another technical contribution of this article and potentially of wider use. In Section \ref{sec:finite_time} we prove the finite-time relative entropy estimate in Theorem \ref{the:finite-time-entropy}. The proof of the uniform-in-time estimate Theorem \ref{the:time_uniform} is completed in Section \ref{sec:uniform_time}, by a combination of an HWI inequality, a relaxation to equilibrium estimate on the Boltzmann sphere, and propagation of exponential moment estimates.

\section{Logarithmic derivatives of the Landau equation}\label{sec:logarithmic_derivatives}

In this section, we derive an improved logarithmic derivative estimate for the Landau equation \eqref{eq:landau-equation} up to any finite order, which is an extension of the gradient and Hessian estimates in \cite{carrillo2025relative}. In particular, we adapt a decomposition of the Landau operator so that we treat the derivative formulas in a unified way, which simplifies the computation in \cite{carrillo2025relative}.

We introduce the following decomposition of the Landau operator, which is similar to \cite{villani1998spatially}. Define
\begin{equation*}
    B(t)=\operatorname{diag} \left( 2-e^{-12t} D_1, 2-e^{-12t} D_2, 2-e^{-12t} D_3 \right).
\end{equation*}
The non-degeneracy assumption of the initial data gives $B(t) \geq \eta \Id$ for some $\eta>0$.  For $1 \leq p<q \leq 3$, we define the differential operators
\begin{equation*}
    \Omega_{pq}=v_p \partial_{v_q}-v_q \partial_{v_p}.
\end{equation*}
Let $A_{pq}=e_q \otimes e_p-e_p \otimes e_q$ be the corresponding antisymmetric constant matrices.  Then we have $\Omega_{pq}=(A_{pq}v) \cdot \nabla$. Using $\displaystyle \sum_{p<q}\Omega_{pq}^2=(|v|^2 \Id-v \otimes v):\nabla^2-2v \cdot \nabla$, the Landau equation \eqref{eq:landau-equation} takes the form
\begin{equation}\label{eq:landau_rotation_decomposition}
    \partial_t f=Lf=\bigg( B(t):\nabla^2+\sum_{p<q} \Omega_{pq}^2+2v \cdot \nabla+6 \bigg) f.
\end{equation}

\begin{theo}[Logarithmic derivative estimate]
\label{the:logarithmic_derivative}
    Let $f$ be the smooth classical solution of \eqref{eq:landau-equation} with the normalizations above. Assume that $f_0$ has a Gaussian upper bound
    \begin{equation*}
        f_0(v) \leq L_0 e^{-L_0^{-1}|v|^2}
    \end{equation*}
    and logarithmic derivative bounds up to order $m$:
    \begin{equation*}
        |\nabla^k \log f_0(v)| \leq L_k (1+|v|^k), \quad 1 \leq k \leq m.
    \end{equation*}
    Then we have
    \begin{equation*}
        |\nabla^k\log f(t,v)| \leq C_k(1+t+|v|^2)^{k/2}, \quad 1 \leq k \leq m,
    \end{equation*}
    where $C_k$'s depend only on $L_k$'s and $\eta$.
\end{theo}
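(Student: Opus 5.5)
We work with $g=\log f$ and use the decomposition \eqref{eq:landau_rotation_decomposition}. Write $\mathcal{A}=B(t):\nabla^2+\sum_{p<q}\Omega_{pq}^2+2v\cdot\nabla$. Then $g$ solves the nonlinear parabolic equation
\begin{equation*}
    \partial_t g=\mathcal{A}g+\Gamma(g,g)+6,\qquad \Gamma(g,g)=\nabla g^{\mathsf T}B(t)\nabla g+\sum_{p<q}(\Omega_{pq}g)^2 .
\end{equation*}
We differentiate this equation $k$ times and run a maximum-principle argument on weighted quantities. The weighted quantities are of the form
\begin{equation*}
    \Phi_k=\frac{|\nabla^k g|^2}{(\lambda+t+|v|^2)^{k}},
\end{equation*}
possibly combined with lower orders as $\Phi_k+K\Phi_{k-1}$. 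We proceed by induction on $k$.

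The key structural facts are commutator identities. First, $\partial_{v_r}$ commutes with $B(t):\nabla^2$. Second, $[\partial_r,\Omega_{pq}]$ is again a first-order operator of the form $\delta_{rp}\partial_q-\delta_{rq}\partial_p$. Third, $[\partial_r,2v\cdot\nabla]=2\partial_r$. So $h=\nabla^k g$ satisfies
\begin{equation*}
    \partial_t h=\mathcal{A}h+2\nabla g^{\mathsf T}B\nabla h+2\sum_{p<q}(\Omega_{pq}g)\,\Omega_{pq}h+\mathcal{R}_k .
\end{equation*}
The terms in $\mathcal{R}_k$ contain only $\nabla^j g$ with $j\le k$. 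These are either linear terms, with constant coefficients from the commutators and with $\Omega$ acting on $\nabla^{k-1}g$, or bilinear products $\nabla^{j}g\ast\nabla^{k+1-j}g$ with $2\le j\le k-1$, together with $v$-weighted variants coming from $\Omega$.

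The terms with a top-order factor, namely $\nabla g\cdot\nabla^{k+1}g$ and $\Omega g\,\Omega\nabla^k g$, are transport terms. At a maximum point of $\Phi_k$ they are harmless, because $\nabla\Phi_k$ vanishes there, except for the contribution through the weight. The dangerous top-order terms, where $\Omega$ falls on $\nabla^{k}g$ inside products with lower derivatives, we absorb using the good term $-|\nabla^{k+1}g|^2/(\cdots)$ and $-\sum_{p<q}|\Omega_{pq}\nabla^k g|^2$. These come from $\mathcal{A}$ acting on $|h|^2$, using $B\ge\eta\Id$.

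By the induction hypothesis, $|\nabla^jg|\le C_j(1+t+|v|^2)^{j/2}$ for $j<k$. Each bilinear remainder is then bounded by $(1+t+|v|^2)^{(k+1)/2}\,|\nabla^kg|$ times constants. This is consistent with the homogeneity of $\Phi_k$: the weight gains exactly the growth produced by the drift $2v\cdot\nabla$ and by $|v|$ factors from $\Omega$.

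The base case $k=1$ would replicate the gradient estimate of \cite{carrillo2025relative}, which we take essentially as given. The same applies to a Hessian-type structure. For $k=2$, the nonlinearity $\nabla^2 g B\nabla^2 g$ actually carries a good sign, which helps. We would use the Gaussian upper and lower bounds, the latter from the Remark, to control $g$ itself by $C(1+|v|^2)$ where needed.

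For general $k$, compute $(\partial_t-\mathcal{A})\Phi_k$. The weight $w=(\lambda+t+|v|^2)^{-k}$ satisfies
\begin{equation*}
    (\partial_t-\mathcal{A})w\le\frac{Ck}{\lambda+t+|v|^2}\,w ,
\end{equation*}
since $\Omega_{pq}|v|^2=0$ and $2v\cdot\nabla|v|^2=4|v|^2$. The latter produces $-4k|v|^2 w/(\lambda+t+|v|^2)$, which is actually favorable in sign. Time is in the weight so that $\partial_t w$ has the good sign as well.

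We aim to show that at an interior maximum with $\Phi_k$ large, $(\partial_t-\mathcal{A}-\text{transport})\Phi_k<0$. Then $\sup\Phi_k\le\max(\sup\Phi_k(0),C)$, and the initial bound follows from the hypothesis $|\nabla^k\log f_0|\le L_k(1+|v|^k)$. Unboundedness of the domain is handled by a standard localization, or by a priori decay assumptions on the smooth solution.

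The main obstacle is the cross term $2\nabla g^{\mathsf T}B\nabla h$, and more precisely the piece of $\nabla(\nabla g)$ hitting the weight. Here $\nabla g$ grows like $|v|$, so $\nabla g\cdot\nabla w\sim |v|^2 w/(\lambda+t+|v|^2)$ is only of order $w$. This produces an $O(1)$ exponential-rate term $C\Phi_k$ that a naive maximum principle cannot close uniformly in time. Our first attempt is to exploit the favorable $-4k|v|^2/(\lambda+t+|v|^2)$ from the drift, choosing the weight exponent or $\lambda$ large to dominate. Failing that, we would use the decomposition $g=-|v|^2/2+\tilde g$ near equilibrium, since the Maxwellian part of $\nabla g$ is exactly $-v$, which combines with $2v\cdot\nabla$ to cancel. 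Only $\nabla\tilde g$ would then enter the cross term.

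A similar difficulty arises from the $|v|$-weighted commutator $[\nabla,\Omega]$ terms coupled with $\Omega g$. Sorting these into terms absorbed by the good quadratic terms via Cauchy--Schwarz, and terms controlled by induction, is the bookkeeping core. Throughout, the constants depend only on $L_j$ and $\eta$.
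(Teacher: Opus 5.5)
There is a genuine gap: the maximum principle for $\Phi_k=|\nabla^k g|^2(\lambda+t+|v|^2)^{-k}$ is never closed, and the two mechanisms you rely on to close it do not work. \emph{First, the drift hurts the weight rather than helping it.} With $w=(\lambda+t+|v|^2)^{-k}$ you have $2v\cdot\nabla w=-4k|v|^2w/(\lambda+t+|v|^2)$. Hence $(\partial_t-\mathcal A)w$ contains $+4k|v|^2w/(\lambda+t+|v|^2)$, so your bound $(\partial_t-\mathcal A)w\le \frac{Ck}{\lambda+t+|v|^2}w$ is false. This term adds to the $+4k|\nabla^k g|^2$ coming from $[\nabla^k,2v\cdot\nabla]=2k\nabla^k$, and the resulting $O(1)$ positive rate has nothing to absorb it. Your fallback, linearizing around $-|v|^2/2$, would need smallness of $\nabla^j(g+|v|^2/2)$ at large $|v|$, which the hypotheses do not give. \emph{Second, the nonlinearity is miscounted.} With $\bar a=B+|v|^2\Id-v\otimes v$ you have $\Gamma(g,g)=\bar a:\nabla g\otimes\nabla g$, and $\nabla^k\Gamma(g,g)$ contains products $\nabla^a g\ast\nabla^b g$ with $a+b=k+2$, not $k+1$. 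In particular it contains $2k\,\nabla^2 g\,\bar a\,\nabla^k g$. This term is linear in $h=\nabla^k g$, but under your induction hypothesis its coefficient is sign-indefinite and of size $(1+|v|^2)(1+t+|v|^2)$. The extra dissipation you get from $K\Phi_{k-1}$ is only $2\eta K(\lambda+t+|v|^2)\Phi_k$ (the rotational part controls only angular derivatives), which is short by a factor $1+|v|^2$. For $k=2$ this term is the Riccati term $\operatorname{tr}(\bar a\,(\nabla^2 g)^3)$, which is cubic in the unknown. It is nonpositive when $\nabla^2 g\le 0$ but has no sign in general, so it helps propagate a \emph{lower} bound on $\nabla^2\log f$, not a bound on $|\nabla^2\log f|$; $\log f$ is not concave in general. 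Taking $k=1,2$ from the literature does not remove the problem, since the same terms reappear at every $k\ge 3$.

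The missing idea is to run Bernstein's method on the \emph{linear} equation $\partial_t f=Lf$ rather than on the nonlinear equation for $\log f$. The paper works with $E_k=|\nabla^k f|^2/f$. Since $\nabla^k\Omega_{pq}f=(\Omega_{pq}+R^{(k)}_{pq})\nabla^k f$ with $R^{(k)}_{pq}$ antisymmetric and commuting with $\Omega_{pq}$, all rotational commutator terms merge with the carr\'e du champ into $-\frac{2}{f}|\Omega_{pq}\nabla^k f+R^{(k)}_{pq}\nabla^k f-\nabla^k f\,\Omega_{pq}\log f|^2\le 0$. What remains is $(\partial_t-L)E_k\le 4kE_k-\eta E_{k+1}+C|\nabla\log f|^2E_k$, with no Riccati term and no $|v|$-growing coefficients. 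Instead of an external weight, the paper compares with $F_k=f(A+\lambda t-\log f)^k$, which satisfies $(\partial_t-L)F_k\ge k(\lambda-6)F_{k-1}+\frac{\eta}{2}k|\nabla\log f|^2F_{k-1}$. The $\lambda t$ supplies exactly the $O(1)$ good term you lacked, and the second term matches the only unbounded coefficient $|\nabla\log f|^2$. One then gets $E_1\le A_1F_1$ for $A_1$ large (at least $8/\eta$). Inductively, $E_{k+1}+\frac{4(k+1)}{\eta}E_k-A_{k+1}F_{k+1}$ is a subsolution, because the $-\eta E_{k+1}$ from the refined $E_k$ inequality cancels $4(k+1)E_{k+1}$. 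The maximum principle then gives $|\nabla^k f|/f\le C(A+\lambda t-\log f)^{k/2}$. The Gaussian upper bound orders the data at $t=0$ (it gives $A-\log f_0\gtrsim 1+|v|^2$), the Gaussian lower bound converts the estimate into $(1+t+|v|^2)^{k/2}$, and $\nabla^k\log f$ is a polynomial in the $\nabla^j f/f$.
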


Here we state this estimate in a general setting. In particular, for the proof of the main result of this article, it suffices to take $m=3$ here.

The brief idea follows from Bernstein's method. We construct some auxiliary functions which typically involve $\displaystyle \frac{|\nabla^k f|^2}{f}$ and $f (\log f)^k$. In the spirit of $\Gamma$-calculus, we show that these auxiliary functions remain non-positive by the maximum principle in \cite{carrillo2025relative} and the desired estimates follow. We first prove two technical lemmas.

\begin{lem}[$\Gamma$-calculus of high-order derivatives]\label{lem:higher_order_quotient}
    Denote by $\displaystyle E_k=\frac{|\nabla^k f|^2}{f}$. For any $1 \leq k \leq m$, we have
    \begin{equation*}
        (\partial_t-L) E_k \leq 4k E_k,
    \end{equation*}
    and a refined version
    \begin{equation*}
        (\partial_t-L) E_k \leq 4k E_k-\eta E_{k+1}+C|\nabla \log f|^2 E_k.
    \end{equation*}
\end{lem}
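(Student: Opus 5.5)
We propose to prove the lemma by a direct $\Gamma$-calculus computation based on the rotational decomposition \eqref{eq:landau_rotation_decomposition}. Write $L = L_0 + 2v\cdot\nabla + 6$ with the second-order part
\begin{equation*}
    L_0 = B(t):\nabla^2 + \sum_{p<q}\Omega_{pq}^2 .
\end{equation*}
The basic identity is the following. For a second-order operator $Q=\sum_\ell X_\ell^2$ given by a sum of squares of vector fields (here $X_\ell$ ranges over $\sqrt{B_{\alpha\alpha}}\,\partial_\alpha$ and $\Omega_{pq}$), and for any scalar or tensor $g$ and positive $f$,
\begin{equation*}
    Q\Big(\frac{|g|^2}{f}\Big) = \frac{2g\cdot Qg}{f} - \frac{|g|^2 Qf}{f^2} + \frac{2}{f}\sum_\ell \Big|X_\ell g - \frac{g\,X_\ell f}{f}\Big|^2 .
\end{equation*}
The last sum is nonnegative, and it is the source of the good term.

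We apply this with $g=\nabla^k f$. The main step is to compute $(\partial_t - L)\nabla^k f$, i.e.\ the commutator $[\nabla^k, L]$.
\begin{itemize}
    \item $B(t):\nabla^2$ commutes with $\nabla^k$ because $B$ is constant in $v$.
    \item For the drift, $[\partial_\alpha, 2v\cdot\nabla] = 2\partial_\alpha$. Hence $\nabla^k(2v\cdot\nabla f) = 2v\cdot\nabla\nabla^k f + 2k\nabla^k f$.
    \item For the rotations, $[\partial_\alpha, \Omega_{pq}]$ is again a first-order derivative. The key structural fact is that the $\Omega_{pq}$ generate rotations, so $|\nabla^k f|^2$ is invariant under rotation: rotating the argument rotates the tensor $\nabla^k f$ by an orthogonal map. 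Concretely, $\nabla^k(\Omega_{pq}h) = \Omega_{pq}\nabla^k h + \mathcal{A}_{pq}\nabla^k h$, where $\mathcal{A}_{pq}$ is the antisymmetric action of $A_{pq}$ on $k$-tensors. Then $\nabla^k\Omega_{pq}^2 f = (\Omega_{pq}+\mathcal{A}_{pq})^2\nabla^k f$.
\end{itemize}
We therefore regard $\tilde X_{pq} = \Omega_{pq} + \mathcal{A}_{pq}$ as the vector field acting on tensors. Since $\mathcal{A}_{pq}$ is antisymmetric, we have $\tilde X_{pq}|g|^2 = \Omega_{pq}|g|^2$, and the sum-of-squares identity continues to hold with $\tilde X$ in place of $X$ when acting on $g$. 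The constant term $6$ is harmless because it appears linearly. Collecting terms in $(\partial_t - L)E_k$:
\begin{itemize}
    \item the zeroth-order contribution $6$ cancels between $2g\cdot Lg/f$ and $-|g|^2 Lf/f^2$ (the first gives $12$, the second $-6$, and the operator $L$ applied to the quotient subtracts $6$);
    \item the drift commutator contributes $2\cdot 2k\,E_k = 4kE_k$;
    \item the remaining contribution is $-\frac{2}{f}\sum_\ell |\tilde X_\ell g - gX_\ell f/f|^2 \le 0$.
\end{itemize}
This gives the first inequality.

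For the refined version we keep part of the dissipative sum, namely the $B$-directions:
\begin{equation*}
    \frac{2}{f}\sum_\alpha B_{\alpha\alpha}\,\Big|\partial_\alpha\nabla^k f - \nabla^k f\,\partial_\alpha\log f\Big|^2 \ \ge\ \frac{2\eta}{f}\,\big|\nabla^{k+1}f - \nabla^k f\otimes\nabla\log f\big|^2 .
\end{equation*}
By $|x-y|^2\ge \tfrac12|x|^2-|y|^2$, this is at least $\eta E_{k+1} - 2\eta|\nabla\log f|^2 E_k$. This yields the refined inequality with $C=2\eta$, or any larger constant.

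The main obstacle is the bookkeeping of the rotational commutators. One must check carefully that the term $\mathcal{A}_{pq}$ produces no leftover first-order contribution in $E_k$; antisymmetry should make this so, giving exact cancellation. One must also track the precise constant $4k$ coming from the drift, and the treatment of the zeroth-order term $6$, which should cancel exactly between the two pieces of the quotient rule. I would first verify the case $k=1$ by hand and then induct on $k$ using $\nabla^{k}=\nabla\nabla^{k-1}$.
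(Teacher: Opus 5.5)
Your proposal is correct and follows essentially the same route as the paper. You commute $\nabla^k$ through the rotational decomposition, which gives $2k\nabla^k f$ from the drift and $(\Omega_{pq}+R^{(k)}_{pq})^2$ from the rotations. You then apply the quotient rule for $|H|^2/f$, use antisymmetry of the tensor action so that the rotational contributions form the exact negative square $-\frac{2}{f}\sum_{p<q}|\Omega_{pq}\nabla^k f+R^{(k)}_{pq}\nabla^k f-\nabla^k f\,\Omega_{pq}\log f|^2$, and keep the $B$-part with $B\ge\eta\Id$ for the refined bound.

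The only difference is cosmetic. You build the antisymmetric action into the metric-compatible field $\Omega_{pq}+\mathcal A_{pq}$ from the start, so the square appears directly. The paper instead computes with the untwisted coefficient matrix and completes the square afterwards using $\nabla^k f : R^{(k)}_{pq}\nabla^k f=0$. Your explicit constant $C=2\eta$ is consistent with the paper's Cauchy--Schwarz step.
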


\begin{proof}
    The starting point is to derive the equation solved by $\nabla^k f$. This requires the computation of commutators between the operators in the bracket in \eqref{eq:landau_rotation_decomposition} and $\nabla^k$. Obviously $[\nabla^k,v\cdot\nabla]=k\nabla^k$ and the constant multiplication commutes with $\nabla^k$.

    Recalling $\Omega_{pq}=(A_{pq}v) \cdot \nabla$ with $A_{pq}=e_q \otimes e_p-e_p \otimes e_q$, we differentiate one component to get
    \begin{equation*}
        \partial_i (\Omega_{pq}f) =\Omega_{pq} (\partial_i f)+(A_{pq})_{\ell i} \partial_\ell f.
    \end{equation*}
    We iterate the differentiation and obtain
    \begin{equation}\label{eq:rotation_tensor_commutator}
        \nabla^k(\Omega_{pq}f)=\Omega_{pq} \nabla^k f+R_{pq}^{(k)} \nabla^k f,
    \end{equation}
    where $R_{pq}^{(k)}$ applies $A_{pq}^{\mathsf T}=-A_{pq}$ in turn to each of the $k$ tensor indices. In particular $(R_{pq}^{(k)})^*=-R_{pq}^{(k)}$ and $R_{pq}^{(k)}$ commutes with $\Omega_{pq}$. Applying \eqref{eq:rotation_tensor_commutator} twice gives
    \begin{equation*}
        \nabla^k (\Omega_{pq}^2 f) =(\Omega_{pq}+R_{pq}^{(k)})^2 \nabla^k f.
    \end{equation*}
    Since $B$ is independent of $v$, we conclude by
    \begin{equation}\label{eq:Hk_evolution}
        (\partial_t-L) \nabla^k f=2k \nabla^k f+ \sum_{p<q} \left( 2R_{pq}^{(k)} \Omega_{pq}+ (R_{pq}^{(k)})^2 \right) \nabla^k f.
    \end{equation}

    We next compute the evolution of $\displaystyle \frac{|\nabla^k f|^2}{f}$. Write $L=a_{ij} \partial_{ij}+b_i \partial_i+6$, where
    \begin{equation*}
        a=B+ \sum_{p<q} (A_{pq}v) \otimes (A_{pq}v), \quad b=2v.
    \end{equation*}
    For an arbitrary tensor $H$, a direct differentiation of $\displaystyle\frac{|H|^2}{f}$ gives
    \begin{align*}
        (\partial_t-L)\frac{|H|^2}{f}=&\, \frac{2 H:(\partial_t-L) H}{f} -\frac{|H|^2 (\partial_t-L) f}{f^2}\\
        &\, -\frac{2}{f} a_{ij} \left( \partial_i H-H \partial_i \log f \right): \left( \partial_j H-H \partial_j \log f \right).
    \end{align*}
    For the specific choice of $H=\nabla^k f$, we insert \eqref{eq:Hk_evolution} in the first term and notice that the second term vanishes:
    \begin{align*}
        (\partial_t-L) \frac{|\nabla^k f|^2}{f}=&\, 4k \frac{|\nabla^k f|^2}{f}+ \frac{2}{f} \sum_{p<q} \nabla^k f : \left( 2R_{pq}^{(k)} \Omega_{pq}+(R_{pq}^{(k)})^2 \right) \nabla^k f \\
        &\, -\frac{2}{f} a_{ij} \left( \partial_i \nabla^k f-\nabla^k f \partial_i \log f \right): \left( \partial_j \nabla^k f-\nabla^k f \partial_j \log f \right).
    \end{align*}
    We split $a_{ij}$ in the last term. The part with $B(t)$ forms a negative square itself, and the remaining tensor part is combined with the second term together:
    \begin{align*}
        &\, -\frac{2}{f} \left| \Omega_{pq} \nabla^k f-\nabla^k f \cdot \Omega_{pq} \log f \right|^2+\frac{2}{f} \nabla^k f:\left( 2R_{pq}^{(k)} \Omega_{pq}+(R_{pq}^{(k)})^2 \right) \nabla^k f\\
        =&\, -\frac{2}{f} \left| \Omega_{pq} \nabla^k f+ R_{pq}^{(k)} \nabla^k f-\nabla^k f \cdot \Omega_{pq} \log f \right|^2.
    \end{align*}
    Here we use $(R_{pq}^{(k)})^\ast=-R_{pq}^{(k)}$ and in particular $\nabla^k f : R_{pq}^{(k)} \nabla^k f=0$. Therefore, we conclude by
    \begin{equation*}
        \begin{aligned}
            (\partial_t-L) \frac{|\nabla^k f|^2}{f}=4k \frac{|\nabla^k f|^2}{f} &\, -\frac{2}{f}\sum_{p<q} |\Omega_{pq} \nabla^k f+R_{pq}^{(k)} \nabla^k f-\nabla^k f \Omega_{pq} \log f|^2\\
            &\, -\frac{2}{f} B_{ij} (\partial_i \nabla^k f-\nabla^k f \partial_i \log f):(\partial_j \nabla^k f-\nabla^k f \partial_j \log f).
        \end{aligned}
    \end{equation*}
    In particular, we abandon the last two terms to get
    \begin{equation*}
        (\partial_t-L) \frac{|\nabla^k f|^2}{f} \leq 4k\frac{|\nabla^k f|^2}{f},
    \end{equation*}
    and abandon only the second term but keep the last term to get
    \begin{equation*}
        (\partial_t-L) \frac{|\nabla^k f|^2}{f} \leq 4k \frac{|\nabla^k f|^2}{f}-\eta \frac{|\nabla^{(k+1)} f|^2}{f}+ C|\nabla\log f|^2 \frac{|\nabla^k f|^2}{f},
    \end{equation*}
    where the second inequality follows from ellipticity $B(t) \geq \eta \Id$ and the Cauchy--Schwarz inequality.
\end{proof}

\begin{lem}[$\Gamma$-calculus of logarithms]\label{lem:logarithmic_weight}
    Denote by $F_k=f (A+\lambda t-\log f)^k$, where $\lambda>6$ and $A$ is sufficiently large such that $A-\log f \geq 2(m-1)$. For any $1 \leq k \leq m$, we have
    \begin{equation*}
        (\partial_t-L)F_k \geq k (\lambda-6) F_{k-1}+\frac{\eta}{2} k |\nabla \log f|^2 F_{k-1}.
    \end{equation*}
\end{lem}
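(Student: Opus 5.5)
The plan is a direct computation of $(\partial_t-L)$ applied to a product $f\,h$ with $h=g(u)$, where
\[
u=A+\lambda t-\log f,\qquad g(u)=u^k,
\]
so that $F_k=f\,g(u)$. Write $L=a_{ij}\partial_{ij}+b_i\partial_i+6$ with $a=B+\sum_{p<q}(A_{pq}v)\otimes(A_{pq}v)$ and $b=2v$, as in the proof of Lemma \ref{lem:higher_order_quotient}. The key structural fact is $a\geq B(t)\geq \eta\Id$, since the rotational part is a sum of rank-one nonnegative matrices.

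\textbf{Step 1: product rule.} Using $(\partial_t-L)f=0$ and the Leibniz rule for the second-order operator $L$ (the zeroth-order term $6$ is absorbed in $hLf$), I expect
\[
(\partial_t-L)(fh)=f\,\partial_t h-f\,(a:\nabla^2 h+b\cdot\nabla h)-2\,a\nabla f\cdot\nabla h .
\]

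\textbf{Step 2: chain rule.} Substitute the chain rule for $h=g(u)$, using
\[
\partial_t u=\lambda-\frac{Lf}{f},\qquad \nabla u=-\nabla\log f,\qquad \nabla^2 u=-\frac{\nabla^2 f}{f}+\nabla\log f\otimes\nabla\log f .
\]
Collecting terms, the contributions $-Lf$, $a:\nabla^2 f$ and $b\cdot\nabla f$ recombine into $-6f$. The two gradient-squared terms, with coefficients $-1$ and $+2$, combine into a single positive term. The outcome should be the exact identity
\[
(\partial_t-L)F_k=g'(u)\Big((\lambda-6)f+f\,a\nabla\log f\cdot\nabla\log f\Big)-g''(u)\,f\,a\nabla\log f\cdot\nabla\log f .
\]

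\textbf{Step 3: specialize and bound.} Take $g'=ku^{k-1}$ and $g''=k(k-1)u^{k-2}$. This gives
\[
(\partial_t-L)F_k=k(\lambda-6)F_{k-1}+k\,f\,u^{k-2}\big(u-(k-1)\big)\,a\nabla\log f\cdot\nabla\log f .
\]
For $k=1$ the $g''$ term vanishes and $u\ge 0$ suffices. For $k\ge 2$, the hypothesis $A-\log f\ge 2(m-1)$ together with $\lambda t\ge0$ gives $u\ge 2(m-1)\ge 2(k-1)$. Hence $u-(k-1)\ge u/2$ and $u>0$, so every power of $u$ is nonnegative. Bounding $a\ge\eta\Id$ then yields the second-order term $\ge \frac{\eta}{2}k|\nabla\log f|^2F_{k-1}$. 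The first term is nonnegative since $\lambda>6$.

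\textbf{Main obstacle.} The only delicate point is the bookkeeping in Step 2: checking that the cross term $-2a\nabla f\cdot\nabla h$ and the term $f\,a\nabla\log f\cdot\nabla\log f$ coming from $\nabla^2 u$ combine with the right sign, and that the constant $6$ and the drift cancel exactly against $Lf$. The $g''$ term carries a negative sign. Controlling it is precisely why $u$ must dominate $2(k-1)$, and I would verify that sign carefully. One should also confirm that the temporal dependence of $B(t)$ causes no extra terms. It does not, because $L$ acts only in $v$ and $\partial_t f=Lf$ is used as a black box.
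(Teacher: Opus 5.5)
Your proposal is correct and takes essentially the same route as the paper. The paper computes $(\partial_t-L)Y^k$ for $Y=A+\lambda t-\log f$ and then applies the product rule with $f$, arriving at exactly your identity $(\partial_t-L)F_k=k(\lambda-6)F_{k-1}+kY^{k-2}(Y-k+1)\,f\,a\nabla\log f\cdot\nabla\log f$, which it bounds using $a\ge\eta\Id$ and $Y-k+1\ge Y/2$; doing the product rule first and the chain rule for a general $g$ afterwards is only a cosmetic reordering.
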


\begin{proof}
    The starting point is to derive the equation solved by $\log f$. Recalling $L=a_{ij}\partial_{ij}+b_i\partial_i+6$, we compute directly
    \begin{align*}
        \partial_t \log f=a_{ij}\partial_{ij} \log f+b_i \partial_i \log f+a_{ij}\partial_i \log f \partial_j \log f+6.
    \end{align*}
    For any two functions $u$ and $v$, the product rule for $L$ reads
    \begin{equation}\label{eq: tLu}
        (\partial_t-L)(uv) =u (\partial_t-L)v+v (\partial_t-L)u -2a_{ij} \partial_i u \partial_jv+6uv.
    \end{equation}
    And hence for $Y=A+\lambda t-\log f$ we have
    \begin{align*}
        (\partial_t-L)(Y^k)=k Y^{k-1}\left( \lambda-6-a_{ij} \partial_i \log f \partial_j \log f \right)-k(k-1)Y^{k-2}a_{ij} \partial_i \log f \partial_j \log f-6Y^k.
    \end{align*}
    We apply \eqref{eq: tLu} with $u=f$ and $v=Y^k$ to obtain
    \begin{align*}
        (\partial_t-L)(fY^k)=&\, kfY^{k-1} \left( \lambda-6 -a_{ij} \partial_i \log f \partial_j \log f \right)\\
        &\, -k(k-1)f Y^{k-2} a_{ij} \partial_i \log f \partial_j \log f+2kf Y^{k-1} a_{ij}\partial_i \log f \partial_j \log f.
    \end{align*}
    Collecting the terms containing $f a_{ij} \partial_i \log f \partial_j \log f \geq \eta |\nabla \log f|^2f$ yields
    \begin{equation*}
        (\partial_t-L)(fY^k) \geq k(\lambda-6)f Y^{k-1}+kY^{k-2} (Y-k+1) \eta |\nabla \log f|^2f,
    \end{equation*}
    which yields the desired estimate by noticing $Y-k+1 \geq Y/2$ by construction.
\end{proof}

\begin{proof}[Proof of Theorem \ref{the:logarithmic_derivative}]
    With Lemma \ref{lem:higher_order_quotient} and Lemma \ref{lem:logarithmic_weight}, we carry out a series of Bernstein auxiliary functions to derive our logarithmic derivative estimates at all orders.
    
    We start with $k=1$, where we have
    \begin{equation*}
        (\partial_t-L) E_1 \leq 4E_1, \quad (\partial_t-L) F_1 \geq \frac{\eta}{2} |\nabla \log f|^2 f.
    \end{equation*}
    Since $E_1=|\nabla \log f|^2 f$, we get
    \begin{equation*}
        (\partial_t-L) (E_1-A_1F_1) \leq 0
    \end{equation*}
    as long as $A_1>8/\eta$. By the initial assumption on $\nabla \log f_0$ and Gaussian upper bound, we may choose $A_1$ large enough but only depending on $L_0$ and $L_1$ such that $E_1-A_1F_1 \leq 0$ holds initially. By the parabolic maximum principle (see \cite[Proposition 3.1]{carrillo2025relative}), we conclude by
    \begin{equation*}
        E_1 \leq A_1 F_1, \quad \forall \, t \geq 0.
    \end{equation*}

    Now we proceed by induction on $k$. Assume that we have $E_k \leq A_k F_k$ for any $t \geq 0$ for some $A_k$ depending on $L_0, \cdots, L_k$. Now we combine the following estimates
    \begin{equation*}
        (\partial_t-L) E_k \leq 4kE_k-\eta E_{k+1}+C|\nabla \log f|^2 E_k, \quad (\partial_t-L) E_{k+1} \leq 4(k+1) E_{k+1},
    \end{equation*}
    \begin{equation*}
        (\partial_t-L) F_{k+1} \geq (k+1)(\lambda-6)F_k+\frac{\eta}{2}(k+1) |\nabla \log f|^2 F_k.
    \end{equation*}
    Using the induction hypothesis, we get
    \begin{equation*}
        (\partial_t-L) \left(E_{k+1}+\frac{4(k+1)}{\eta}E_k-A_{k+1}F_{k+1}\right) \leq 0
    \end{equation*}
    as long as $\displaystyle A_{k+1} \geq \max\left( \frac{8C}{\eta^2}, \frac{16k}{\eta(\lambda-6)} \right) A_k$. By the initial assumption on $f_0$, we may also choose $A_{k+1}$ large enough and depending on $L_0, \cdots, L_{k+1}$ such that $E_{k+1}-A_{k+1}F_{k+1} \leq 0$ holds initially. By the parabolic maximum principle, we verify the estimate for $k+1$.

    Therefore, we have proved that
    \begin{equation*}
        \frac{|\nabla^k f|^2}{f}(t,v) \leq A_k f(t,v) (A+\lambda t-\log f(t,v))^k.
    \end{equation*}
    By the Gaussian lower bound, the right-hand side is bounded by $C_k(1+t+|v|^2)^k f(t,v)$ for some $C_k$. Divide by $f(t,v)$ and take the square root on both sides, and the result follows from elementary identities between $\nabla^k \log f$ and $\nabla^k f/f$.
\end{proof}

\section{Evolution of the relative entropy and preliminary results}\label{sec:entropy_evolution}

In this section, we first derive the evolution equation for the normalized relative entropy and then establish several preliminary moment estimates. 

\subsection{Evolution of the relative entropy}

This subsection is devoted to the time evolution formula of the relative entropy.

\begin{lem}[Evolution of relative entropy]\label{lem:evolution_entropy}
   For any $t \geq 0$, it holds that
   \begin{align*}
      H_N \left( F_N(t)| f_t^{\otimes N} \right) &\, \leq H_N \left( F_N(0)| f_0^{\otimes N} \right) \\
       &\, +\int_0^t \int_{\R^{3N}} F_N(s) \frac{1}{N} \sum_{i=1}^N \left[ \frac{1}{N} \sum_{j=1}^N V_{f}(v^i,v^j) -\bar V_{f}(v^i) \right]\ud s,
    \end{align*}
    where the two-body mean-field defect functional $V_f$ is defined by
    \begin{align*}
        V_{f}(v,w)= a(v-w): \frac{\nabla^2 f}{f}(v)&\, +2b(v-w) \cdot \nabla \log f(v)\\
        &\, -a(v-w):\nabla \log f(v) \otimes \nabla \log f(w),
    \end{align*}
    and $\bar V_f$ is the one-side expectation of $V_f$ under $f$:
    \begin{equation*}
      \bar  V_{f}(v)=\int_{\R^3} V_{f}(v,w)f(w) \ud w.
    \end{equation*}
\end{lem}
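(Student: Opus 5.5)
We compute $\frac{\ud}{\ud t}$ of $\int F_N\log F_N$ and of $-\int F_N\log f_t^{\otimes N}$ separately, then combine. Throughout we write $\mathcal{D}_{ij}=\nabla_{v^i}-\nabla_{v^j}$ and $a_{ij}=a(v^i-v^j)$, so that the master equation reads $\partial_t F_N=\frac1N\sum_{i<j}\mathcal{D}_{ij}\cdot(a_{ij}\mathcal{D}_{ij}F_N)$. The inequality (rather than equality) in the statement comes from working with an entropy solution: the entropy part is only known to satisfy the dissipation inequality. Equivalently, one can do the computation for a regularized problem and pass to the limit using lower semicontinuity.

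\textbf{Step 1: entropy of $F_N$.} Integrating by parts gives
\[
\frac{\ud}{\ud t}\int F_N\log F_N \le -\frac1N\sum_{i<j}\int a_{ij}:\mathcal{D}_{ij}\log F_N\otimes\mathcal{D}_{ij}\log F_N\, F_N .
\]

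\textbf{Step 2: cross term.} Set $\Phi=\log f_t^{\otimes N}=\sum_k\log f_t(v^k)$. Use the weak formulation with test function $\Phi$ and add $-\int F_N\,\partial_t\Phi$. After one integration by parts the master equation contributes
\[
+\frac1N\sum_{i<j}\int a_{ij}:\mathcal{D}_{ij}\log F_N\otimes\mathcal{D}_{ij}\Phi\,F_N .
\]
Now complete the square against the dissipation from Step 1:
\[
-|x|_a^2+\langle x,y\rangle_a\le -\tfrac12|x|_a^2+\tfrac12|y|_a^2 .
\]
Done naively this leaves a quadratic term $\frac{1}{2N}\sum a_{ij}:\mathcal{D}_{ij}\Phi\otimes\mathcal{D}_{ij}\Phi$. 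That is not of the form $V_f$, so I would use a different route to keep things exact. Write $\log F_N=\log(F_N/f^{\otimes N})+\Phi$ and use
\[
\int F_N\, \mathcal D_{ij}\log\tfrac{F_N}{f^{\otimes N}}\cdot G=\int f^{\otimes N}\mathcal D_{ij}\tfrac{F_N}{f^{\otimes N}}\cdot G .
\]
Integrating by parts once more puts all derivatives on $f^{\otimes N}$ and $a_{ij}$. The cleanest organization is to use the identity
\[
\frac{\ud}{\ud t}H=-\frac1N\sum_{i<j}\int a_{ij}:\mathcal{D}_{ij}\log\tfrac{F_N}{f^{\otimes N}}\otimes\mathcal{D}_{ij}\log F_N\,F_N-\int F_N\,\frac{(\partial_t f)^{\otimes}}{f^{\otimes N}}\text{-terms}.
\]
Then split $\mathcal{D}_{ij}\log F_N=\mathcal{D}_{ij}\log\tfrac{F_N}{f^{\otimes N}}+\mathcal{D}_{ij}\Phi$. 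The first piece gives a nonpositive relative Fisher term, which we drop (this is again the source of $\le$). The second piece is integrated by parts onto $f^{\otimes N}$ via $F_N\nabla\log(F_N/f^{\otimes N})=f^{\otimes N}\nabla(F_N/f^{\otimes N})$. This yields
\[
\frac1N\sum_{i<j}\int F_N\,\frac{1}{f^{\otimes N}}\mathcal{D}_{ij}\cdot\big(a_{ij}\mathcal{D}_{ij}f^{\otimes N}\big).
\]

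\textbf{Step 3: identification with $V_f$.} Expand $\frac{1}{f^{\otimes N}}\mathcal{D}_{ij}\cdot(a_{ij}\mathcal{D}_{ij}f^{\otimes N})$ using $\nabla\cdot a=b$, $a(0)=0$ and the symmetry $a(-z)=a(z)$, $b(-z)=-b(z)$. This produces the terms $a_{ij}:\frac{\nabla^2f}{f}(v^i)+2b_{ij}\cdot\nabla\log f(v^i)$ and their $i\leftrightarrow j$ counterparts, together with the cross terms $-2a_{ij}:\nabla\log f(v^i)\otimes\nabla\log f(v^j)$. Symmetrizing over $i<j$ turns this into $\sum_{i,j}V_f(v^i,v^j)$; the diagonal terms vanish since $a(0)=0=b(0)$. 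After normalizing by $\frac1N$ in $H_N$, we obtain $\frac1N\sum_i\frac1N\sum_j V_f(v^i,v^j)$. Finally, $-\int F_N\sum_i\partial_t\log f(v^i)$ is handled by rewriting the Landau equation as $\frac{\partial_t f}{f}(v)=\int V_f(v,w)f(w)\ud w=\bar V_f(v)$. This follows from the divergence form: expand $\nabla\cdot((a*f)\nabla f-(b*f)f)/f$ and integrate by parts in $w$ for the $\nabla f(w)$ term. Integrating in time then gives the claim.

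\textbf{Main obstacle.} The main obstacle is Step 2–3 bookkeeping: checking that the sign and factor conventions make the $\nabla\log f(v)\otimes\nabla\log f(w)$ cross term appear with coefficient exactly $-1$ in both the particle part and $\bar V_f$. I would also need to justify the integrations by parts for an entropy solution, using the Gaussian tails from Theorem \ref{the:logarithmic_derivative} and the polynomial growth of $\nabla^k\log f$.
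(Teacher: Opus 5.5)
Your route is the paper's route step for step. You differentiate $H_N$, integrate by parts, split $\mathcal D_{ij}\log F_N=\mathcal D_{ij}\log\frac{F_N}{f^{\otimes N}}+\mathcal D_{ij}\log f^{\otimes N}$, drop the nonnegative relative dissipation, and move the remaining derivatives onto $f^{\otimes N}$. Your expansion $\frac{1}{f^{\otimes N}}\mathcal D_{ij}\cdot(a_{ij}\mathcal D_{ij}f^{\otimes N})=V_f(v^i,v^j)+V_f(v^j,v^i)$ is correct, so the cross-term coefficient you flagged is not the problem.

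The step that fails is the last one. With $V_f$ as defined, the identity $\partial_t\log f(v)=\int V_f(v,w)f(w)\ud w$ is false. Since $\int a(v-w)\nabla f(w)\ud w=(b\ast f)(v)$,
\begin{equation*}
    \bar V_f=(a\ast f):\frac{\nabla^2 f}{f}+2(b\ast f)\cdot\nabla\log f-(b\ast f)\cdot\nabla\log f=(a\ast f):\frac{\nabla^2 f}{f}+(b\ast f)\cdot\nabla\log f,
\end{equation*}
whereas expanding the divergence form gives $\partial_t\log f=(a\ast f):\frac{\nabla^2 f}{f}-c\ast f$. The difference is $(b\ast f)\cdot\nabla\log f+c\ast f=-2v\cdot\nabla\log f-6$. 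It has zero $f$-average but is not zero pointwise. For the stationary solution $f=\gamma$ one finds $V_\gamma(v,w)=2|v|^2-2|w|^2$, so $\bar V_\gamma(v)=2|v|^2-6$, while $\partial_t\log\gamma=0$.

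What is true is the symmetric identity $\partial_t\log f(v)=\int\big(V_f(v,w)+V_f(w,v)\big)f(w)\ud w$, because $\int V_f(w,v)f(w)\ud w=-(c\ast f)(v)-(b\ast f)(v)\cdot\nabla\log f(v)$. The paper's written proof contains the same slip. It replaces $2(b\ast f)\cdot\nabla\log f$ by $(b\ast f)\cdot\nabla\log f-c\ast f$ using $\int b(v-w)\cdot\nabla f(w)\ud w=c\ast f$. That is legitimate only if half of the drift term carries $\nabla\log f(w)$ instead of $\nabla\log f(v)$.

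So your computation yields the inequality with $\bar V_f(v^i)$ replaced by $\partial_t\log f_s(v^i)$. That is the stated right-hand side plus $-\int_0^t\int F_N(s)\frac1N\sum_{i}\big(6+2v^i\cdot\nabla\log f_s(v^i)\big)\ud V\ud s$, not the lemma as written.

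The clean repair is to define the drift part of $V_f$ symmetrically, as $b(v-w)\cdot\big(\nabla\log f(v)-\nabla\log f(w)\big)$. This leaves $V_f(v,w)+V_f(w,v)$ unchanged, hence also your particle double sum and the vanishing diagonal. Now $\int V_f(v,w)f(w)\ud w=(a\ast f):\frac{\nabla^2 f}{f}-c\ast f=\partial_t\log f$ holds exactly. The additional fluctuation this produces later involves only the test functions $\partial_\alpha\log f$ and $v\cdot\nabla\log f$, which the law-of-large-numbers lemma already covers.
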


\begin{proof}
    We start by differentiating the normalized relative entropy directly.
    \begin{equation}\label{eq:H_derivative_start}
        \frac{\ud}{\ud t} H_N \left( F_N| f^{\otimes N} \right)= \frac{1}{N} \int (\partial_t F_N) \log \frac{F_N}{f^{\otimes N}}-\frac{1}{N} \int F_N \partial_t \log f^{\otimes N}.
    \end{equation}
    Recall the Landau master equation \eqref{eq:master-equation} solved by $F_N$:
    \begin{equation*}
        \partial_t F_N=\frac{1}{N} \sum_{i<j} (\nabla_{v^i}-\nabla_{v^j}) \cdot \big( a(v^i-v^j) \cdot (\nabla_{v^i}-\nabla_{v^j}) F_N \big).
    \end{equation*}
    We insert this formula into the first term of \eqref{eq:H_derivative_start} and integrate by parts.
    \begin{align}\label{eq:FN_entropy_part}
        \frac{1}{N} \int (\partial_t F_N) \log \frac{F_N}{f^{\otimes N}} &\, = -\frac{1}{N^2}\sum_{i<j} \int F_N \left[ a(v^i-v^j) : (\nabla_{v^i}-\nabla_{v^j}) \log F_N \otimes (\nabla_{v^i}-\nabla_{v^j}) \log \frac{F_N}{f^{\otimes N}} \right] \nonumber\\
        &\, = -\mathcal D_N(t) -\frac{1}{N^2}\sum_{i<j} \int F_N \left[ a(v^i-v^j): (\nabla_{v^i}-\nabla_{v^j})\log f^{\otimes N} \otimes (\nabla_{v^i}-\nabla_{v^j}) \log \frac{F_N}{f^{\otimes N}} \right],
    \end{align}
    where $\mathcal D_N(t)$ is the relative entropy dissipation functional defined by
    \begin{equation*}
        \mathcal D_N(t)= \frac{1}{N^2} \sum_{i<j} \int F_N \left|\sigma(v_i-v_j) \cdot (\nabla_{v^i}-\nabla_{v^j}) \log \frac{F_N}{f^{\otimes N}} \right|^2 \geq 0
    \end{equation*}
    with $\sigma(z)=|z|\Pi(z)$. We integrate by parts again in the last term in \eqref{eq:FN_entropy_part}.
    \begin{align}
        &\, -\frac{1}{N^2} \sum_{i<j} \int F_N \left[ a(v^i-v^j): (\nabla_{v^i}-\nabla_{v^j}) \log f^{\otimes N} \otimes (\nabla_{v^i}-\nabla_{v^j}) \log \frac{F_N}{f^{\otimes N}} \right] \nonumber\\
        =&\, \frac{1}{N^2} \sum_{i<j} \int F_N \left[ (\nabla_{v^i}-\nabla_{v^j}) \cdot \big(a(v^i-v^j) \cdot (\nabla_{v^i}-\nabla_{v^j}) \log f^{\otimes N} \big) +\left| \sigma(v_i-v_j) \cdot (\nabla_{v^i}-\nabla_{v^j}) \log f^{\otimes N} \right|^2 \right].
    \end{align}
    By the elementary identity $(\nabla_{v^i}-\nabla_{v^j})\log f^{\otimes N}=\nabla \log f(v_i)-\nabla \log f(v_j)$, and since $a$ is even and $b=\nabla\cdot a$ is odd, a direct expansion gives
    \begin{align*}
        &\, (\nabla_{v^i}-\nabla_{v^j}) \cdot \big( a(v^i-v^j) \cdot (\nabla \log f(v_i)-\nabla \log f(v_j)) \big)+ \left|\sigma(v_i-v_j) \cdot (\nabla \log f(v_i)-\nabla \log f(v_j)) \right|^2 \\
        =&\, V_{f}(v^i,v^j)+V_{f}(v^j,v^i).
    \end{align*}
    Hence, we sum up in $i,j$ and get
    \begin{equation}\label{eq:LN_G_over_G}
        -\frac{1}{N^2} \sum_{i<j} a(v^i-v^j): (\nabla_{v^i}-\nabla_{v^j}) \log f^{\otimes N} \otimes (\nabla_{v^i}-\nabla_{v^j}) \log \frac{F_N}{f^{\otimes N}}= \frac{1}{N^2}\sum_{i,j=1}^N V_{f}(v^i,v^j),
    \end{equation}
    where the diagonal terms vanish because we set $a(0)=0$ and $b(0)=0$, which implies finally
    \begin{align*}
        \frac{1}{N} \int (\partial_t F_N) \log \frac{F_N}{f^{\otimes N}}+\mathcal{D}_N(t)= \int  F_N(t) \left[ \frac{1}{N^2} \sum_{i,j=1}^N V_{f}(v^i,v^j) \right].
    \end{align*}
    It remains to identify the one-side expectation terms. Integrating by parts gives
    \begin{equation*}
        a \ast (f\nabla \log f)=b \ast f, \qquad \int (b(v-w) \cdot \nabla \log f(w)) f(w) \ud w=(c \ast f)(v).
    \end{equation*}
    Therefore, we compute explicitly
    \begin{align*}
        \int V_{f}(v,w) f(w) \ud w &\, =(a \ast f)(v):\frac{\nabla^2 f}{f}(v)+(b \ast f)(v) \cdot \nabla \log f(v)-(c \ast f)(v) \nonumber\\
        &\, -\nabla \log f(v) \cdot (a \ast (f\nabla \log f))(v) \nonumber\\
        &\, =(a \ast f)(v):\frac{\nabla^2 f}{f}(v)-(c \ast f)(v)=\partial_t \log f(v),
    \end{align*}
    where the last equality is the non-divergence form of the Landau equation. Consequently,
    \begin{equation*}
        \partial_t \log f^{\otimes N} =\sum_{i=1}^N \int V_{f}(v^i,w)f(w) \ud w.
    \end{equation*}
    Combining this identity with \eqref{eq:H_derivative_start}-\eqref{eq:LN_G_over_G} gives
    \begin{equation*}
        \frac{\ud}{\ud t} H_N \left( F_N| f^{\otimes N} \right) =-\mathcal D_N(t)+\int F_N(t) \left[ \frac{1}{N} \sum_{i=1}^N \left( \frac{1}{N} \sum_{j=1}^N V_{f}(v^i,v^j) -\bar V_{f}(v^i)\right) \right].
    \end{equation*}
    Since $\mathcal D_N(t)\geq 0$, the result follows from a time integration. 
\end{proof}

\subsection{Some preliminary results}

In this subsection, we gather some preliminary computations for Kac's system \eqref{eq:particle-sde}. The first lemma proves a uniform-in-time polynomial moment estimate for the particle system. The quantitative estimate \eqref{eq:conditional_moment_bound} will play a key role in the proof of Proposition \ref{prop:landau_exponential_moments}. 

\begin{lem}[Uniform-in-time polynomial moment]\label{lem:particle_moment_propagation}
    Let $k \geq 1$ and assume $M_{2k}<\infty$. Then there exists a constant $C_k>0$, independent of $N$ and $t$, such that
    \begin{equation}\label{eq:uniform_particle_moment}
        \sup_{N \geq 2} \sup_{t \geq 0} \E |V^1(t)|^{2k} \leq C_k M_{2k}.
    \end{equation}
    More precisely, conditioning on the initial configuration, we have
    \begin{equation}\label{eq:conditional_moment_bound}
        \sup_{N \geq 2} \sup_{t \geq 0} \E \left[ |V^1(t)|^{2k} \big| \bV(0) \right] \leq \max \left\{ |V^1(0)|^{2k}, (ke_N)^k \left( 1+\frac{k-1}{N} \right)^{-k} \right\},
    \end{equation}
    where we recall $\displaystyle e_N=\frac{1}{N} \sum_{i=1}^N |V^i(0)|^2=\frac{1}{N} \sum_{j=1}^N |V^j(t)|^2$ from \eqref{eq:particle-conservation}.
\end{lem}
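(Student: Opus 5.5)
The plan is to apply It\^o's formula to $g(V^1(t))$ with $g(v)=|v|^{2k}$, conditionally on $\bV(0)$. The goal is a closed differential inequality for $u(t)=\E[|V^1(t)|^{2k}\,|\,\bV(0)]$ whose only fixed point is the threshold $(ke_N)^k(1+\frac{k-1}{N})^{-k}$. The key simplification is that $b(z)=-2z$ and the conservation laws \eqref{eq:particle-conservation} make every empirical average that appears expressible through $V^1$, $m_N$ and $e_N$ alone, and $m_N,e_N$ are constants of motion.

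\emph{Drift.} The drift of particle $1$ is $\frac{2}{N}\sum_j b(V^1-V^j)=-4(V^1-m_N)$.

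\emph{Generator.} With $\nabla g=2k|v|^{2k-2}v$ and $\nabla^2 g=2k|v|^{2k-2}\Id+4k(k-1)|v|^{2k-4}v\otimes v$, and using $a(z):\Id=2|z|^2$ and $a(z):v\otimes v=|z|^2|v|^2-(z\cdot v)^2$, the generator applied to $g(V^1)$ is
\begin{equation*}
-8k|V^1|^{2k-2}V^1\cdot(V^1-m_N)+\frac{1}{N}\sum_{j}\Big[4k|V^1|^{2k-2}|z_j|^2+4k(k-1)|V^1|^{2k-4}\big(|z_j|^2|V^1|^2-(z_j\cdot V^1)^2\big)\Big],
\end{equation*}
where $z_j=V^1-V^j$. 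The identity $\frac1N\sum_j|z_j|^2=|V^1|^2-2V^1\cdot m_N+e_N$ handles the first sum.

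\emph{Main obstacle.} The hardest step is the lower bound on $\frac1N\sum_j (z_j\cdot V^1)^2$. The $j=1$ term vanishes, so only $N-1$ terms contribute, and I would apply Cauchy--Schwarz over $j\neq1$:
\begin{equation*}
\frac1N\sum_{j\ne1}(z_j\cdot V^1)^2\ge \frac{1}{N(N-1)}\Big(\sum_{j\neq 1} z_j\cdot V^1\Big)^2=\frac{N}{N-1}\big(|V^1|^2-V^1\cdot m_N\big)^2 .
\end{equation*}
I expect this restriction to $j\neq 1$ to be exactly what produces the factor $(1+\frac{k-1}{N})$. Writing $x=|V^1|^2$ and $y=V^1\cdot m_N$, I then need to check that all the $y$-dependent terms combine into something nonpositive, or at least do no harm, so that the generator is bounded by
\begin{equation*}
4k\,x^{k-1}\Big(ke_N-\big(1+\tfrac{k-1}{N}\big)x\Big).
\end{equation*}
For the $y$ terms I would first try completing the square in $y$, together with $|m_N|^2\le e_N$. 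If the sharp constant fails, a slightly weaker threshold would still give \eqref{eq:uniform_particle_moment}.

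\emph{Closing the ODE.} Taking conditional expectations (the martingale part is a true martingale by the moment bounds) and using Jensen in the form $\E[x^{k-1}]\le u^{(k-1)/k}$ gives
\begin{equation*}
u'\le 4k\,u^{(k-1)/k}\Big(ke_N-\big(1+\tfrac{k-1}{N}\big)u^{1/k}\Big).
\end{equation*}
The right-hand side is negative whenever $u$ exceeds $(ke_N)^k(1+\frac{k-1}{N})^{-k}$. A comparison argument therefore shows that $u$ never exceeds the maximum of its initial value $|V^1(0)|^{2k}$ and this threshold, which is \eqref{eq:conditional_moment_bound}.

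\emph{Unconditional bound.} Finally I take the full expectation and use Jensen, $e_N^k\le\frac1N\sum_i|V^i(0)|^{2k}$. Since the initial data are i.i.d.\ with law $f_0$, this gives $\E[e_N^k]\le M_{2k}$. Hence $\E|V^1(t)|^{2k}\le M_{2k}+k^kM_{2k}$, which is \eqref{eq:uniform_particle_moment}.
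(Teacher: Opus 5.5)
Your proposal is correct and follows the same architecture as the paper's proof: It\^o's formula for $|v|^{2k}$ conditionally on $\bV(0)$, Jensen to close the inequality, a comparison argument, then $\E[e_N^k]\le M_{2k}$. The only difference is how you handle the negative quartic term, and the step you left open does go through with the sharp constant. After your Cauchy--Schwarz bound, the generator is at most
\begin{equation*}
    -4k\Big(1+\frac{k-1}{N-1}\Big)x^k+4k^2e_N\,x^{k-1}+\frac{4k(k-1)}{N-1}\,x^{k-2}\big(2xy-Ny^2\big).
\end{equation*}
Since $2xy-Ny^2\le x^2/N$ and $\frac{k-1}{N-1}-\frac{k-1}{N(N-1)}=\frac{k-1}{N}$, this is bounded by exactly $4k\,x^{k-1}\big(ke_N-(1+\frac{k-1}{N})x\big)$. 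Neither $|m_N|^2\le e_N$ nor a fallback threshold is needed.

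The paper avoids this computation. It uses $|z_j|^2|V^1|^2-(z_j\cdot V^1)^2=|V^1\times V^j|^2=|V^1|^2|V^j|^2-(V^1\cdot V^j)^2$, so the terms involving $y$ cancel identically. The generator then becomes $\frac{4k}{N}\sum_j\big[-|V^1|^{2k}+k|V^1|^{2k-2}|V^j|^2-(k-1)|V^1|^{2k-4}(V^1\cdot V^j)^2\big]$, and one simply drops the nonpositive $j\neq1$ terms of the last sum. In that route the factor $1+\frac{k-1}{N}$ comes from the retained self-term $j=1$, which contributes $-(k-1)|V^1|^{2k}/N$, not from restricting to $j\neq1$.

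Your Cauchy--Schwarz lower bound on $\frac1N\sum_j(z_j\cdot V^1)^2$ actually exceeds the paper's implicit bound $x^2-2xy+x^2/N$ by $\frac{N}{N-1}(y-x/N)^2$. However, completing the square throws away exactly this surplus, so both routes land on the same differential inequality.
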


\begin{proof}
    Fix the initial configuration and define
    \begin{equation*}
        m_k(t)=\E \left[ |V^1(t)|^{2k} \big| \bV(0) \right].
    \end{equation*}
    Applying It\^o's formula to $\phi(v)=|v|^{2k}$ and using the explicit computations
    \begin{equation*}
        \nabla \phi(v)=2k|v|^{2k-2} v, \quad \nabla^2 \phi(v)=2k|v|^{2k-2} \Id+2k(2k-2)|v|^{2k-4} v\otimes v,
    \end{equation*}
    together with $b(z)=-2z$ and $a(z)=|z|^2\Id-z \otimes z$, we have
    \begin{align*}
        \frac{\ud}{\ud t} m_k(t) =\frac{4k}{N} \sum_{j=1}^N \E \Big[ -|V^1|^{2k} &\, +k|V^1|^{2k-2} |V^j|^2 \nonumber\\
        &\, -(k-1)|V^1|^{2k-4} |V^1 \cdot V^j|^2 \Big| \bV(0) \Big].
    \end{align*}
    The first line can be expressed in an explicit way. For the second line, we only keep the non-positive term with $j=1$ and drop the remaining non-positive terms. Using \eqref{eq:particle-conservation} and Jensen's inequality, we write
    \begin{align*}
        \frac{\ud}{\ud t} m_k(t) &\, \leq -4k \left( 1+\frac{k-1}{N} \right) m_k(t) +4k^2 e_N \E \left[ |V^1(t)|^{2k-2} \big| \bV(0) \right]\\
        &\, \leq -4k \left( 1+\frac{k-1}{N} \right) m_k(t) +4k^2 e_N m_k(t)^{1-1/k}.
    \end{align*}
    We denote $u_k(t)=m_k(t)^{1/k}$. The preceding inequality gives
    \begin{equation*}
        u_k'(t) \leq -4 \left( 1+\frac{k-1}{N} \right) u_k(t)+ 4ke_N.
    \end{equation*}
    Solving this scalar differential inequality gives
    \begin{equation*}
        u_k(t) \leq \max \left\{ u_k(0), \left( 1+\frac{k-1}{N} \right)^{-1} k e_N \right\}.
    \end{equation*}
    Taking the $k$-th power on both sides yields \eqref{eq:conditional_moment_bound}. Finally, by convexity, we have
    \begin{equation*}
        \E [e_N^k] =\E \left( \frac{1}{N} \sum_{i=1}^N |V^i(0)|^2 \right)^k \leq \frac{1}{N} \sum_{i=1}^N \E |V^i(0)|^{2k}=M_{2k}.
    \end{equation*}
    Taking expectations in \eqref{eq:conditional_moment_bound} proves \eqref{eq:uniform_particle_moment}.
\end{proof}

The next lemma estimates the initial energy fluctuation.

\begin{lem}[Initial energy fluctuation]\label{lem:initial_energy_fluctuation}
    Assume $M_4<\infty$. Then we have
    \begin{equation*}
       \E \left( \frac{1}{N} \sum_{i=1}^N |V^i(0)|^2-3 \right)^2 \leq \frac{C}{N}(M_4+1).
    \end{equation*}
\end{lem}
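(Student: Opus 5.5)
This is a direct variance computation for an average of i.i.d. centered variables. Since $F_N(0)=f_0^{\otimes N}$, the initial velocities $V^1(0),\dots,V^N(0)$ are i.i.d. with law $f_0$. By the energy normalization \eqref{eq:normalization-condition} we have $\E|V^i(0)|^2=3$. Set
\begin{equation*}
    X_i=|V^i(0)|^2-3 .
\end{equation*}
The quantity to bound is then
\begin{equation*}
    \E\Big(\frac1N\sum_{i=1}^N X_i\Big)^2=\frac1{N^2}\sum_{i,j=1}^N \E[X_iX_j].
\end{equation*}

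The key step is to expand the square and use independence. The $X_i$ are independent with $\E X_i=0$, so every off-diagonal term $\E[X_iX_j]=\E X_i\,\E X_j$ with $i\neq j$ vanishes. Only the $N$ diagonal terms survive, and the expression equals $\frac1N\E X_1^2$.

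For the diagonal term,
\begin{equation*}
    \E X_1^2=\E|V^1(0)|^4-6\,\E|V^1(0)|^2+9=M_4-9\le M_4 .
\end{equation*}
More crudely, $(x-3)^2\le 2x^2+18$ also gives $\E X_1^2\le 2M_4+18\le C(M_4+1)$. Either way, the claimed bound holds with $C=1$ (or $C=18$ in the crude version).

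There is no real obstacle. The only points to check are that $M_4<\infty$ makes $X_1$ square integrable, and that the identity $\E|V^1(0)|^2=3$ from the normalization is what kills the cross terms.
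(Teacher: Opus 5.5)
Your proposal is correct and follows the same route as the paper: the initial velocities are i.i.d. with $\E|V^1(0)|^2=3$, so the cross terms vanish after expanding the square, leaving $\frac1N\E\big(|V^1(0)|^2-3\big)^2\le \frac{C}{N}(M_4+1)$. Your exact evaluation $\E X_1^2=M_4-9$ is also right and slightly sharper than what the paper records.
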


\begin{proof}
    The variables $V^1(0),\cdots,V^N(0)$ are i.i.d. with $\E |V^1(0)|^2=3$. Hence all the cross terms vanish after expanding the square.
    \begin{equation*}
        \E \left( \frac{1}{N} \sum_{i=1}^N |V^i(0)|^2-3 \right)^2 = \frac{1}{N} \E \left( |V^1(0)|^2-3 \right)^2 \leq \frac{C}{N}(M_4+1).
    \end{equation*}
\end{proof}

Finally we compute an empirical covariance fluctuation estimate.

\begin{lem}[Empirical covariance fluctuation]\label{lem:entrywise_covariance_fluctuation}
    Assume $M_{16}<\infty$. For every $\alpha,\beta\in\{1,2,3\}$,
    \begin{equation}\label{eq:entrywise_covariance_fluctuation}
        \sup_{t \geq 0} \E \left| \frac{1}{N} \sum_{j=1}^N V^j_\alpha(t) V^j_\beta(t)-E_{\alpha\beta}(t) \right|^2 \leq \frac{C}{N}(M_4+1),
    \end{equation}
    \begin{align}\label{eq:m16}
        \sup_{t \geq 0} \E \left| \frac{1}{N} \sum_{j=1}^N V^j_\alpha(t) V^j_\beta(t)-E_{\alpha\beta}(t) \right|^8 \leq \frac{C}{N^4}(M_{16}+1).
    \end{align}
    Here $E_{\alpha\beta}(t)$ is the covariance matrix of the Landau equation.
\end{lem}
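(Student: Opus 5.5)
We will prove \eqref{eq:entrywise_covariance_fluctuation} and \eqref{eq:m16} by exploiting the fact that, for Kac's system with Maxwellian molecules, the empirical second moment matrix satisfies a \emph{closed} linear stochastic differential equation whose drift reproduces exactly the linear ODE solved by $E(t)$. Set
\begin{equation*}
    S_N(t)=\frac1N\sum_{j=1}^N V^j(t)\otimes V^j(t), \qquad \Delta(t)=S_N(t)-E(t).
\end{equation*}

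\textbf{Step 1: the SDE for $S_N$.} Apply It\^o's formula to $\phi(v)=v_\alpha v_\beta$ using \eqref{eq:particle-sde}. The drift contributes $\frac{2}{N^2}\sum_{i,j} [b(V^i-V^j)\otimes V^i+V^i\otimes b(V^i-V^j)]$. With $b(z)=-2z$, momentum conservation $m_N=\frac1N\sum_j V^j(0)$, and $\sum_i V^i\otimes m_N = N\, m_N\otimes m_N$, this equals $-8S_N+8\,m_N\otimes m_N$.

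The It\^o correction is $\frac{2}{N^2}\sum_{i,j} a(V^i-V^j)$, and the cross-variation between different particles $i$ and $j$ through the shared $B^{i,j}=-B^{j,i}$ contributes the off-diagonal term $-\frac{2}{N^2}\sum_{i,j}a(V^i-V^j)$ (mixed second derivatives). The net effect is a combination of $\frac1{N^2}\sum_{i,j} a(V^i-V^j)$. Expanding $a(z)=|z|^2\Id-z\otimes z$ gives, exactly,
\begin{equation*}
    \frac1{N^2}\sum_{i,j}a(V^i-V^j)=2e_N\Id-2S_N-2|m_N|^2\Id+2m_N\otimes m_N,
\end{equation*}
which is affine in $S_N$, with coefficients $e_N$ and $m_N$ that are conserved and random only through $\bV(0)$. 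Collecting terms yields
\begin{equation*}
    \ud S_N=\big(-12 S_N+4e_N\Id+R_N\big)\ud t+\ud \mathcal M_N,
\end{equation*}
where $R_N$ is a quadratic expression in $m_N$ and $\mathcal M_N$ is a martingale. The deterministic version $E'=-12E+12\Id$ (with $e=3$, $m=0$) is the classical covariance equation, consistent with $E(t)=\Id+e^{-12t}\operatorname{diag}(D)$. Care is needed with the precise constants, and we will check them against this limit.

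\textbf{Step 2: Duhamel.} Subtracting the two equations gives
\begin{equation*}
    \Delta(t)=e^{-12t}\Delta(0)+\int_0^t e^{-12(t-s)}\big(4(e_N-3)\Id+R_N\big)\ud s+\int_0^t e^{-12(t-s)}\ud\mathcal M_N(s).
\end{equation*}
The first two terms are bounded uniformly in $t$ by $|\Delta(0)|+C|e_N-3|+C|m_N|^2$, since $\int_0^\infty e^{-12s}\ud s$ is finite. These are all centred i.i.d.\ averages at time $0$:
\begin{itemize}
    \item by Lemma \ref{lem:initial_energy_fluctuation} and its analogue for $\Delta(0)$, their second moments are $O(1/N)$ with a constant $C(M_4+1)$;
    \item $\E|m_N|^4=O(N^{-2})$.
\end{itemize}
For the eighth moments, Rosenthal's (or Marcinkiewicz--Zygmund's) inequality for i.i.d.\ sums gives $\E|\frac1N\sum_j X_j|^8\le CN^{-4}\E|X_1|^8$ with $X_j=V^j_\alpha V^j_\beta-E_{\alpha\beta}(0)$. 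This requires $M_{16}$. The term $|m_N|^2$ requires only $\E|m_N|^{16}=O(N^{-8})$, which again needs $M_{16}$.

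\textbf{Step 3: the stochastic convolution (main obstacle).} The martingale is
\begin{equation*}
    \ud\mathcal M_N=\frac{\sqrt2}{N^{3/2}}\sum_{i,j}\big(V^i\otimes\sigma(V^i-V^j)\ud B^{i,j}+\text{transpose}\big).
\end{equation*}
By antisymmetry, its quadratic variation is of size
\begin{equation*}
    \frac{C}{N^3}\sum_{i<j}|V^i-V^j|^2\big(|V^i|^2+|V^j|^2\big)\le \frac{C}{N}\cdot\frac1N\sum_i |V^i|^4+\frac{C}{N}\,e_N^2 .
\end{equation*}
The stochastic convolution is not itself a martingale. We handle it either by It\^o's formula for $e^{12t}\Delta$ applied to $|\Delta|^2$ and $|\Delta|^8$ directly, or by noting that
\begin{equation*}
    Z(t)=\int_0^t e^{-12(t-s)}\ud\mathcal M_N(s)
\end{equation*}
satisfies $\ud Z=-12Z\,\ud t+\ud\mathcal M_N$. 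Then for $p=2$,
\begin{equation*}
    \frac{\ud}{\ud t}\E|Z|^2\le-24\E|Z|^2+\frac CN\,\E\Big[\frac1N\sum_i|V^i|^4+e_N^2\Big]\le -24\E|Z|^2+\frac CN(M_4+1),
\end{equation*}
using Lemma \ref{lem:particle_moment_propagation} with $k=2$. Grönwall's inequality then gives $\sup_{t\ge0}\E|Z|^2\le C(M_4+1)/N$.

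For $p=8$, It\^o's formula for $|Z|^8$ gives
\begin{equation*}
    \frac{\ud}{\ud t}\E|Z|^8\le -96\E|Z|^8+C\,\E\big[|Z|^6\,\ud\langle\mathcal M\rangle/\ud t\big].
\end{equation*}
We bound the last term by Young's inequality: $\E|Z|^8$ times a small constant plus $C\,\E(\ud\langle\mathcal M\rangle/\ud t)^4$. The latter is at most
\begin{equation*}
    \frac{C}{N^4}\,\E\Big(\frac1N\sum_i|V^i|^4\Big)^4\le \frac{C}{N^4}\sup_t\E|V^1|^{16}\le \frac{C}{N^4}M_{16},
\end{equation*}
by convexity and Lemma \ref{lem:particle_moment_propagation} with $k=8$. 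This is exactly where $M_{16}$ enters. Grönwall's inequality again yields the uniform-in-time bound $C(M_{16}+1)N^{-4}$, and combining with Step 2 proves \eqref{eq:m16}.

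The delicate points are the exact algebraic closure in Step 1 (which relies on pathwise conservation) and tracking the eighth-moment quadratic-variation bound uniformly in time.
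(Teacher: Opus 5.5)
Your proposal follows the paper's proof: the same closed SDE $\ud S_N=[-12S_N+12\,m_N\otimes m_N+4(e_N-|m_N|^2)\Id]\ud t+\ud\mathcal M_N$, the same Duhamel formula against $E'=-12E+12\Id$, the same i.i.d.\ moment bounds at time $0$, and the same quadratic-variation bound combined with Lemma \ref{lem:particle_moment_propagation} for $k=2$ and $k=8$. The only methodological difference is in the stochastic convolution $Z$. You control it by It\^o's formula for $|Z|^2$ and $|Z|^8$ plus Young and Gr\"onwall. The paper instead applies It\^o's isometry and BDG to $e^{-12t}\int_0^t e^{12s}\ud M_s$ and then Jensen in time. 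Both work equally well.

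One step in Step 1 must be corrected, however. Your claim that the cross-variation between particles $i\neq j$ contributes $-\frac{2}{N^2}\sum_{i,j}a(V^i-V^j)$ through mixed second derivatives is false. The function $\frac1N\sum_k v^k\otimes v^k$ is a sum of one-particle functions, so its mixed Hessian $\nabla^2_{v^iv^j}$ vanishes for $i\neq j$. Hence the cross-variations $\ud\langle V^i,V^j\rangle=-\frac2N a(V^i-V^j)\ud t$ never enter. The It\^o correction is exactly
\[
\frac{2}{N^2}\sum_{i,j}a(V^i-V^j)=4(e_N-|m_N|^2)\Id-4S_N+4m_N\otimes m_N,
\]
and adding the first-order drift $-8S_N+8m_N\otimes m_N$ gives the coefficient $-12$.

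This is not cosmetic. If your off-diagonal term were really present, it would cancel the correction and the drift would be $-8S_N$. Then $\Delta$ would pick up the $O(1)$ forcing $4E(t)-12\Id$, and the estimate would fail. So the exact match with the coefficient $-12$ in the limit covariance equation has to be derived from the particle dynamics; it cannot be calibrated against the limit you are comparing with. With this fix, the rest of your argument goes through as written.
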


\begin{proof}
    We define the shorthand notation
    \begin{equation*}
        Q_{\alpha\beta}(t)= \frac{1}{N} \sum_{j=1}^N V^j_\alpha(t) V^j_\beta(t).
    \end{equation*}
    We also recall
    \begin{equation*}
        m_N=\frac{1}{N} \sum_{i=1}^N V^i(0)=\frac{1}{N} \sum_{i=1}^N V^i(t), \quad e_N=\frac{1}{N} \sum_{i=1}^N |V^i(0)|^2=\frac{1}{N} \sum_{i=1}^N |V^i(t)|^2
    \end{equation*}
    from \eqref{eq:particle-conservation}. Applying It\^o's formula for the particle system \eqref{eq:particle-sde} gives
    \begin{equation*}
        \ud Q_{\alpha\beta}(t)=\left[ -12Q_{\alpha\beta}(t)+12 m_{N,\alpha} m_{N,\beta} +4\delta_{\alpha\beta} \left( e_N-|m_N|^2 \right) \right] \ud t+\ud M_{\alpha\beta}(t),
    \end{equation*}
    where the martingale $M_{\alpha\beta}$ is given by
    \begin{equation*}
        \ud M_{\alpha\beta}(t)= \frac{\sqrt{2}}{N^{3/2}} \sum_{i<j} \sum_{\gamma=1}^3 \left[ (V^i_\beta-V^j_\beta) \sigma_{\alpha\gamma}(V^i-V^j)+ (V^i_\alpha-V^j_\alpha)
        \sigma_{\beta\gamma}(V^i-V^j) \right] \ud B_t^{i,j,\gamma}.
    \end{equation*}
    Since $|\sigma(z)|\leq C|z|$, its quadratic variation satisfies
    \begin{equation*}
        \frac{\ud}{\ud t} \langle M_{\alpha\beta} \rangle_t \leq \frac{C}{N^3} \sum_{i<j} |V^i(t)-V^j(t)|^4 \leq \frac{C}{N^2} \sum_{i=1}^N |V^i(t)|^4,
    \end{equation*}
    where we use $|V^i-V^j|^4\leq 8(|V^i|^4+|V^j|^4)$ in the last step.
    The covariance matrix of the Landau solution satisfies
    \begin{equation*}
        E_{\alpha\beta}'(t)=-12E_{\alpha\beta}(t)+12\delta_{\alpha\beta},
    \end{equation*}
    see for instance in \cite{villani1998spatially}. Let $\displaystyle D_{\alpha\beta}(t)=Q_{\alpha\beta}(t)-E_{\alpha\beta}(t)$ be the target difference. Subtracting the preceding two equations and solving the linear equation, we have
    \begin{equation}\label{eq:entrywise_D_solution_revised}
        D_{\alpha\beta}(t)=e^{-12t} D_{\alpha\beta}(0)+(1-e^{-12t}) R_{\alpha\beta}+ \int_0^t e^{-12(t-s)} \ud M_{\alpha\beta}(s),
    \end{equation}
    where $R_{\alpha\beta}$ is a constant defined by
    \begin{equation*}
        R_{\alpha\beta}=
        \begin{cases}
            m_{N,\alpha} m_{N,\beta}, &\, \alpha \neq \beta;\\
            m_{N,\alpha}^2+\frac{1}{3} (e_N-|m_N|^2-3), \quad &\, \alpha=\beta.
        \end{cases}
    \end{equation*}
    To prove \eqref{eq:entrywise_covariance_fluctuation}, we estimate the three terms in \eqref{eq:entrywise_D_solution_revised} separately. By independence at $t=0$, in a similar spirit as Lemma \ref{lem:initial_energy_fluctuation} we have
    \begin{equation*}
        \E |D_{\alpha\beta}(0)|^2 \leq \frac{C}{N}(M_4+1), \quad \E |m_N|^4 \leq \frac{C}{N^2} (M_4+1).
    \end{equation*}
    Together with Lemma \ref{lem:initial_energy_fluctuation}, this gives
    \begin{equation*}
        \E |R_{\alpha\beta}|^2 \leq \frac{C}{N}(M_4+1).
    \end{equation*}
    Finally, for the martingale term, by It\^o's isometry and the quadratic-variation bound, we have
    \begin{align*}
        \E \left| \int_0^t e^{-12(t-s)} \ud M_{\alpha\beta}(s) \right|^2 \leq \frac{C}{N^2}\int_0^t e^{-24(t-s)} \E \sum_{i=1}^N |V^i(s)|^4 \ud s.
    \end{align*}
    Applying Lemma \ref{lem:particle_moment_propagation} with $k=2$ gives the desired bound \eqref{eq:entrywise_covariance_fluctuation}. As for \eqref{eq:m16}, we also start with \eqref{eq:entrywise_D_solution_revised}. Independence and the standard eighth-moment estimate for centered empirical averages give
    \begin{equation*}
        \E |D_{\alpha\beta}(0)|^8 \leq \frac{C}{N^4} (M_{16}+1), \quad \E |e_N-3|^8 \leq \frac{C}{N^4} (M_{16}+1), \quad \E |m_N|^{16} \leq \frac{C}{N^8} (M_{16}+1).
    \end{equation*}
    Consequently, the first two terms in \eqref{eq:entrywise_D_solution_revised} satisfy
    \begin{equation*}
        \E \left| e^{-12t} D_{\alpha\beta}(0)+(1-e^{-12t}) R_{\alpha\beta} \right|^8
        \leq \frac{C}{N^4} (M_{16}+1).
    \end{equation*}
    For the martingale term, we apply the Burkholder--Davis--Gundy inequality and Lemma \ref{lem:particle_moment_propagation} with $k=8$ to show
    \begin{align*}
        \E \left( \int_0^t e^{-12(t-s)} \ud M_{\alpha\beta}(s) \right)^8 
        &\, \leq C\E \left( \int_0^t e^{-24(t-s)}
        \ud\langle M_{\alpha\beta}\rangle_s \right)^4 \\
        &\, \leq \frac{C}{N^8}\E \left( \int_0^t e^{-24(t-s)}
        \sum_{i=1}^N |V^i(s)|^4\ud s \right)^4 \\
        &\, \leq \frac{C}{N^4}(M_{16}+1).
    \end{align*}
    This completes the proof of \eqref{eq:m16}.
\end{proof}

\section{Law-of-large-numbers estimates}\label{sec:lln}

This section is devoted to the proof of a new law-of-large-numbers estimate involving two logarithmic derivatives. 

\begin{lem}[Dual backward solutions]\label{lem:polynomial_backward_estimates}
    Fix any terminal time $T>0$ and any test function $\psi: \R^3 \to \R$ satisfying the pointwise polynomial estimate
    \begin{equation}\label{eq:psi_polynomial_bound}
        |\nabla^r \psi(v)| \leq A_T (1+|v|^{n_r})
    \end{equation}
    for $0 \leq r \leq m$ for some constants $A_T>0$, $m \geq 0$ and $n_0, \cdots, n_m \geq 0$. Then, for every $0 \leq t \leq T$, there exists a family of functions $u_s: \R^3 \to \R$ with $0 \leq s \leq t$, satisfying the dual backward equation
    \begin{equation}\label{eq:backward_linear_landau}
    \begin{aligned}
        \partial_s u_s &\, = -(a \ast f_s): \nabla^2 u_s -2 (b \ast f_s) \cdot \nabla u_s \\
        &\, = -\nabla \cdot \left( (a \ast f_s) \cdot \nabla u_s +(b \ast f_s) u_s \right) +(c \ast f_s)u_s,
    \end{aligned}
    \end{equation}
    with terminal condition $u_t=\psi$. Moreover, we have the conservation condition
    \begin{equation}\label{eq:conservation_condition}
        \int_{\R^3} \psi(v) f_t(v) dv =\int_{\R^3} u_0(v) f_0(v) dv,
    \end{equation}
    and the pointwise polynomial estimate
    \begin{equation}\label{eq:backward_growth_bound}
        |\nabla^r u_s(v)| \leq CA_T (1+|v|^{n_r}), \quad  0 \leq s \leq t \leq T
    \end{equation}
    for $0 \leq r \leq m$, where $C>0$ is a universal constant.
\end{lem}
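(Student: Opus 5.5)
The plan is to construct $u_s$ explicitly through a stochastic (Feynman--Kac) representation, which works because for Maxwellian molecules the backward operator has coefficients that are affine in $v$. Using the explicit formulas $a\ast f_s(v)=(|v|^2\Id-v\otimes v)+(3\Id-E(s))$ and $b\ast f_s=-2v$, equation \eqref{eq:backward_linear_landau} becomes
\begin{equation*}
    \partial_s u_s+(a\ast f_s):\nabla^2 u_s-4v\cdot\nabla u_s=0 .
\end{equation*}
The diffusion matrix admits the decomposition already used in Section \ref{sec:logarithmic_derivatives}:
\begin{equation*}
    a\ast f_s(v)=B(s)+\sum_{p<q}(A_{pq}v)\otimes(A_{pq}v),
\end{equation*}
where $B(s)$ is the constant diagonal matrix defined there and is uniformly positive definite. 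I would therefore take, for $s\le r\le t$, the \emph{linear} SDE
\begin{equation*}
    \ud X_r^{s,v}=-4X_r^{s,v}\ud r+\sqrt{2}\,B(r)^{1/2}\ud W_r^0+\sqrt2\sum_{p<q}A_{pq}X_r^{s,v}\ud W_r^{pq},\qquad X_s^{s,v}=v,
\end{equation*}
with independent Brownian motions $W^0$ (three-dimensional) and $W^{pq}$ (scalar). I then define $u_s(v)=\E\,\psi(X_t^{s,v})$. By It\^o's formula and the Markov property, $u$ solves the backward equation with $u_t=\psi$. Since the coefficients are smooth and affine, and $\psi$ has polynomial growth, classical Kolmogorov backward theory applies once the moment bounds below are in hand.

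The first set of estimates concerns moments of $X$. It\^o's formula gives
\begin{equation*}
    \ud|X|^2=\bigl(-8|X|^2+2\operatorname{tr}B+2\textstyle\sum_{p<q}|A_{pq}X|^2\bigr)\ud r+\ud(\text{mart.}),
\end{equation*}
and $\sum_{p<q}|A_{pq}X|^2=2|X|^2$, so the drift is $-4|X|^2+C$. The same computation for $|X|^{2k}$ gives a dissipative differential inequality. Hence
\begin{equation*}
    \E|X_r^{s,v}|^{2k}\le C_k(1+|v|^{2k})
\end{equation*}
uniformly in $s\le r\le T$ (in fact uniformly in time).

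The second set of estimates concerns derivatives in $v$. Because the SDE is affine, $J_r=D_vX_r^{s,v}$ solves the homogeneous linear equation
\begin{equation*}
    \ud J=-4J\ud r+\sqrt2\sum_{p<q}A_{pq}J\,\ud W^{pq},\qquad J_s=\Id,
\end{equation*}
and all higher derivatives $D_v^kX$ with $k\ge2$ vanish. Antisymmetry of $A_{pq}$ again makes $|J|^{2k}$ a supermartingale-type quantity, so $\E|J_r|^{2k}\le C_k$. Differentiating under the expectation then gives
\begin{equation*}
    \nabla^r u_s(v)=\E\bigl[\nabla^r\psi(X_t^{s,v})[J_t,\dots,J_t]\bigr].
\end{equation*}
Combining \eqref{eq:psi_polynomial_bound} with H\"older's inequality and the moment bounds yields \eqref{eq:backward_growth_bound}, with $C$ depending only on $n_r$, $r$ and $\eta$.

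Finally, for the conservation identity \eqref{eq:conservation_condition} I would compute $\frac{\ud}{\ud s}\int u_s f_s$. Using the divergence form of the Landau equation for $\partial_s f_s$ and the divergence form of \eqref{eq:backward_linear_landau} for $\partial_s u_s$, the two contributions cancel after integration by parts. The boundary terms vanish because $u_s$ and its first two derivatives grow at most polynomially while $f_s$ and $\nabla f_s$ have Gaussian decay, by the Gaussian bounds and Theorem \ref{the:logarithmic_derivative}.

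I expect the main obstacle to be the rigorous justification rather than the formal calculus. This means showing that $u$ is a classical $C^{1,m}$ solution, justifying differentiation under the expectation, and controlling the martingale terms in the moment bounds for $|X|^{2k}|J|^{2\ell}$ (for instance by localization and Fatou's lemma). The key structural fact that makes this tractable is that the dependence on $v$ is affine, which avoids any Lipschitz or Gr\"onwall loss.
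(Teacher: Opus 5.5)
Your proposal is correct and follows essentially the same route as the paper. The paper uses the same affine SDE, since your $\sqrt2\sum_{p<q}A_{pq}X_r\,\ud W^{pq}_r$ and $\sqrt2\,B(r)^{1/2}\ud W^0_r$ coincide with its $\sqrt2\sum_k (e_k\times X_r)\,\ud W^k_r$ and $\sqrt{2(3\Id-E(r))}\,\ud B_r$. The rest also matches: the representation $u_s(v)=\E\,\psi(X_t^{s,v})$, moment bounds for $X$, differentiation under the expectation using that $D_vX$ is independent of $v$, and the integration-by-parts cancellation for \eqref{eq:conservation_condition}. The one minor difference is that the paper notes the martingale part of $|D_vX_r^{s,v}x|^2$ vanishes identically, so $|D_vX_r^{s,v}x|=e^{-2(r-s)}|x|$ holds deterministically, whereas your bound $\E|J_r|^{2k}\le C_k$ is weaker but equally sufficient.
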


\begin{rem}
    This lemma could be compared to  the duality method of Bresch--Duerinckx--Jabin \cite{bresch2024duality}. But our method works on the level of (the limit) Landau equation,  instead of  the dual master equation.
    Any test function $\psi$ acting on $f$ at time $t$ can be pulled back to time zero via the adjoint of the limiting semigroup, while \eqref{eq:backward_growth_bound} ensures that this dual evolution preserves the same polynomial class for the test function.
\end{rem}

\begin{proof}
    {\bf Step 1: Construction of the solution.} We consider the linear stochastic equation
    \begin{equation}\label{eq:backward_sde}
        \ud X_r=-4X_r \ud r +\sqrt{2} \sum_{k=1}^3 (e_k\times X_r) \ud W_r^k +\sqrt{2(3\Id-E(r))} \ud B_r, \quad X_s=v,
    \end{equation}
    where $(e_k)_{k=1}^3$ denotes the canonical basis of $\R^3$, and $(W^k)_{k=1}^3$ are independent standard one-dimensional Brownian motions, and $B$ is a standard three-dimensional Brownian motion independent of $(W^k)_{k=1}^3$. Since the coefficients are affine in the state variable, and the diffusive matrix $3\Id-E(r)$ is non-negative definite and uniformly bounded under the energy normalization condition \eqref{eq:normalization-condition}, \eqref{eq:backward_sde} admits a pathwise unique strong solution. 

    For any $0 \leq s \leq t \leq T$ and any $v \in \R^3$, let $X_r^{s,v}$ denote the solution of \eqref{eq:backward_sde} starting from $v$ at time $s$, and define
    \begin{equation*}
        u_s(v)= \E \left[ \psi(X_t^{s,v}) \right].
    \end{equation*}
    In the following steps, we show that $u$ satisfies all the properties stated in the lemma. The terminal condition follows immediately from the definition:
    \begin{align*}
        u_t(v)= \E \left[ \psi(X_t^{t,v}) \right]=\psi(v).
    \end{align*}

    {\bf Step 2: Verification of the pointwise estimate.} We next verify \eqref{eq:backward_growth_bound}. Since the stochastic flow associated with \eqref{eq:backward_sde} depends affinely on the initial condition $v$, its derivative $D_vX_r^{s,v}$ is independent of $v$. For any $x\in\R^3$,  the process $(D_vX_r^{s,v}(x))_{s\le r\le t}$ satisfies $D_v X_s^{s,v}(x)=x$ since $ X_s^{s,v}=v$, and we compute
    \begin{align*}
        \ud D_v X_r^{s,v} (x)= -4 D_v X_r^{s,v} (x) \ud r +\sqrt{2} \sum_{k=1}^3 [e_k \times D_v X_r^{s,v} (x)] \ud W_r^k.
    \end{align*}
    Since $\displaystyle \sum_{k=1}^3 |e_k\times D_v X_r^{s,v} (x)|^2=2| D_v X_r^{s,v} (x)|^2$, by It\^o's formula we have
    \begin{align*}
        \ud | D_v X_r^{s,v} (x)|^2=&\, 2D_v X_r^{s,v} (x) \cdot \ud  D_v X_r^{s,v} (x)+2 \sum_{k=1}^3 |e_k\times D_v X_r^{s,v} (x)|^2 \ud r\\
        =&\, -4|D_v X_r^{s,v} (x)|^2 \ud r +\sqrt{2} \sum_{k=1}^3 \underbrace{D_v X_r^{s,v} (x)\cdot [e_k\times D_v X_r^{s,v} (x)]}_{=0} \ud W_r^k\\
        =&\, -4|D_v X_r^{s,v} (x)|^2 \ud r,
    \end{align*}
    which in turn implies $|D_v X_r^{s,v}(x)|=e^{-2(r-s)}|x| \leq |x|$ for any $x \in \R^3$. In particular, the operator norm of $D_vX_r^{s,v}$ is bounded by $1$.

    We next derive the moment estimate for $X_r^{s,v}$. Applying It\^o's formula to $|X_r^{s,v}|^{2q}$ for any $q \geq 1$ and using the boundedness of $3\Id-E(r)$ gives
    \begin{align*}
        \frac{\ud}{\ud r}\E|X_r^{s,v}|^{2q} \leq -c_q \E |X_r^{s,v}|^{2q} +C_q \E |X_r^{s,v}|^{2q-2} \leq -\frac{c_q}{2} \E |X_r^{s,v}|^{2q} +C_q,
    \end{align*}
    where the last inequality follows from Young's inequality. Solving this scalar differential inequality gives the uniform moment estimate
    \begin{equation}\label{eq:backward_process_moments}
        \sup_{0 \leq s \leq r \leq t} \E[|X_r^{s,v}|^{2q}] \leq C_q(1+|v|^{2q})
    \end{equation}
    for some constant $C_q>0$ for any $q \geq 1$.

    We now differentiate the stochastic representation $u_s(v)=\E \left[ \psi(X_t^{s,v}) \right]$. Since the flow is affine in $v$, its second derivative with respect to $v$ vanishes. Therefore, differentiating under the expectation gives
    \begin{equation*}
        \nabla u_s(v)=\E \left[ (D_v X_t^{s,v})^{\mathsf T} \nabla \psi(X_t^{s,v}) \right],
    \end{equation*}
    and also
    \begin{equation*}
        \nabla^2 u_s(v)=\E \left[ (D_v X_t^{s,v})^{\mathsf T} \nabla^2 \psi(X_t^{s,v}) D_v X_t^{s,v} \right].
    \end{equation*}
    Combining these two identities with the upper bound of $D_v X_r^{s,v}$, the pointwise assumption on $\psi$ and the moment estimate \eqref{eq:backward_process_moments}, we get \eqref{eq:backward_growth_bound}.

    {\bf Step 3: Verification of the weak solution and the conservation condition.} It remains to verify that $u$ satisfies the dual backward equation \eqref{eq:backward_linear_landau} and the conservation condition \eqref{eq:conservation_condition}. First, using the elementary identity $\displaystyle \sum_{k=1}^3 (e_k\times v)(e_k\times v)^{\mathsf T}=|v|^2\Id- v \otimes v$, we obtain the quadratic variation matrix
    \begin{align*}
        \ud \langle X \rangle_r &\, = 2\sum_{k=1}^3 (e_k\times X_r)(e_k\times X_r)^\top \ud r +2(3\Id-E(r)) \,\ud r\\
        &\, = 2\left( |X_r|^2 \Id-X_r \otimes X_r+3 \Id-E(r) \right) \ud r.
    \end{align*}
    Consequently, the infinitesimal generator of $X$ is given, for a smooth function $\phi$, by
    \begin{align*}
        \mathcal L_r \phi(v) &\, =\left( |v|^2\Id-v \otimes v+3\Id-E(r) \right):\nabla^2 \phi(v)-4v \cdot \nabla \phi(v) \notag\\
        &\, =(a \ast f_r)(v):\nabla^2 \phi(v)+2(b \ast f_r)(v) \cdot \nabla \phi(v).
    \end{align*}
    By the flow property and the Markov property of $X$, we write
    \begin{equation*}
        u_s(v) =\E \left[ \psi(X_t^{s,v}) \right] =\E \left[ \E \left[ \psi(X_t^{s+h, X_{s+h}^{s,v}})| \mathcal F_{s+h} \right] \right] =\E \left[ u_{s+h}(X_{s+h}^{s,v}) \right].
    \end{equation*}
    Now we apply the time-dependent It\^o's formula to $u_{s+h}(X_{s+h}^{s,v})$ for $h \geq 0$. 
    \begin{align*}
        u_{s+h}(X_{s+h}^{s,v}) =u_s(v)&\, +\int_s^{s+h} (\partial_r+\mathcal L_r) u_r (X_r^{s,v}) \ud r+\sqrt{2} \sum_{k=1}^3 \int_s^{s+h} \nabla u_r (X_r^{s,v}) \cdot (e_k \times X_r^{s,v}) \ud W_r^k\\
        &\, +\int_s^{s+h} \nabla u_r (X_r^{s,v})^\top \sqrt{2(3\Id-E(r))} \ud B_r.
    \end{align*}
    Taking expectations on both sides, we get
    \begin{equation*}
        0=\E \left[ u_{s+h}(X_{s+h}^{s,v})-u_s(v) \right]=\int_s^{s+h} \E \left[ (\partial_r u_r+\mathcal L_r u_r)(X_r^{s,v}) \right] \ud r.
    \end{equation*}
    Since $X_r^{s,v} \to v$ as $r \to s$, the continuity of the coefficients and the estimates established in Step 2 allow us to take the limit $h \to 0$. We obtain
    \begin{equation*}
        0= \lim_{h \to 0} h^{-1} \int_s^{s+h} \E \left[ (\partial_r u+\mathcal L_r u) \right] \ud r=\partial_s u_s+\mathcal L_s u_s.
    \end{equation*}
    Thus $u$ satisfies \eqref{eq:backward_linear_landau}. It remains to prove \eqref{eq:conservation_condition}, which follows from a direct computation. By \eqref{eq:backward_linear_landau} and \eqref{eq: div-landau-equation} and integration by parts, we have
    \begin{align*}
        \frac{\ud}{\ud s} \int_{\R^3} u_s f_s \ud v = &\, \int_{\R^3} \partial_s u_s f_s \ud v+\int_{\R^3} u_s \partial_s f_s \ud v\\
        =&\, \int_{\R^3} \nabla f_s \cdot \left( (a \ast f_s) \nabla u_s+(b \ast f_s) u_s \right)+ (c \ast f_s) u_s f_s \ud v\\
        &\, \qquad \qquad \qquad \qquad -\int_{\R^3} \nabla u_s \cdot \left( (a \ast f_s) \nabla f_s-(b \ast f_s) f_s \right) \ud v =0. 
    \end{align*}
    Hence the integral $\displaystyle \int_{\R^3} u_s f_s$ is conserved in time, which yields \eqref{eq:conservation_condition} by comparing the values of $s=0$ and $s=t$.
\end{proof}

Using this dual backward solution, we prove a new law of large numbers for weighted moments, which cannot be treated directly as in subsection 3.2 using pointwise conservation laws.

\begin{lem}[LLN for weighted moments]\label{lem:score_weighted_moment_lln}
    For any $T>0$ and any weight function $\psi\in\mathcal{C}^2(\R^3)$ satisfying the assumptions of Lemma \ref{lem:polynomial_backward_estimates}, there exists a universal constant $C>0$ such that for any $0 \leq t \leq T$,
    \begin{equation*}
        \E \left[ \left( 1+\frac{1}{N} \sum_{i=1}^N |V^i(t)|^4 \right) \left| \frac{1}{N} \sum_{j=1}^N \psi(V^j(t)) -\int_{\R^3} \psi(v) f_t(v) \ud v \right|^2 \right] \leq \frac{C}{N} (1+T)^2A_T^2 M_n.
    \end{equation*}
    Here $n=8\max(n_0/2,n_1,n_2+4)$.
\end{lem}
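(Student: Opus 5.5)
Let $\mu_N(s)=\frac1N\sum_j\delta_{V^j(s)}$ and $R(t)=\langle\psi,\mu_N(t)-f_t\rangle$. Fix $t\le T$ and take the dual backward solution $u_s$ of Lemma \ref{lem:polynomial_backward_estimates} with $u_t=\psi$. By the conservation condition \eqref{eq:conservation_condition},
\begin{equation*}
R(t)=\langle u_0,\mu_N(0)-f_0\rangle+\int_0^t \ud\langle u_s,\mu_N(s)\rangle ,
\end{equation*}
since $\langle u_s,f_s\rangle$ is constant in $s$. The first term is an i.i.d. centered average at time zero. The second term I expand with It\^o's formula along the particle SDE \eqref{eq:particle-sde}. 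Using the backward equation \eqref{eq:backward_linear_landau}, the drift becomes
\begin{equation*}
\partial_s u_s+\frac1N\sum_j\big[a(V^i-V^j):\nabla^2u_s(V^i)+2b(V^i-V^j)\cdot\nabla u_s(V^i)\big]-\big[(a\ast f_s):\nabla^2 u_s+2(b\ast f_s)\cdot\nabla u_s\big](V^i).
\end{equation*}
Because $a$ is quadratic and $b$ is linear, the empirical convolutions are explicit. We have $\frac1N\sum_j a(v-V^j)=|v|^2\Id-v\otimes v-2(v\cdot m_N)\Id+v\otimes m_N+m_N\otimes v+e_N\Id-Q$ and $\frac1N\sum_j b(v-V^j)=-2(v-m_N)$. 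Comparing these with $a\ast f_s=(|v|^2\Id-v\otimes v)+3\Id-E(s)$ and $b\ast f_s=-2v$, the drift defect is therefore a sum of terms of the form (fluctuation)$\times$(polynomial weight times $\nabla u_s$ or $\nabla^2 u_s$). The fluctuations are $m_N$, $e_N-3$ and $Q_{\alpha\beta}(s)-E_{\alpha\beta}(s)$, and the weights are bounded by $(1+|v|)|\nabla^2u_s|+|\nabla u_s|$. The remainder is a martingale with quadratic variation bounded by $\frac{C}{N^3}\sum_{i<j}|V^i-V^j|^2|\nabla u_s(V^i)-\nabla u_s(V^j)|^2$, which is of order $1/N$ times an empirical polynomial moment.

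Next I estimate each of the three pieces with the weight $W(t)=1+\frac1N\sum_i|V^i(t)|^4$, applying Cauchy--Schwarz or H\"older to separate $W$ from the squared fluctuation.

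For the initial term, I split $\E[W(t)X^2]\le(\E W^2)^{1/2}(\E X^4)^{1/2}$. Lemma \ref{lem:particle_moment_propagation} bounds $\E W^2$ uniformly in time. The fourth moment of the i.i.d. centered average $X=\langle u_0,\mu_N(0)-f_0\rangle$ is $O(N^{-2})$ times $\E|u_0|^4\le CA_T^4(1+M_{4n_0})$ by \eqref{eq:backward_growth_bound}. This gives $O(A_T^2/N)$.

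For the drift term, I use Cauchy--Schwarz in time, which produces a factor $t$, together with H\"older on $\E\big[W\cdot\text{fluct}^2\cdot\text{(empirical weight)}^2\big]$. The eighth-moment bound \eqref{eq:m16} controls $|Q-E|^8$, and the analogous eighth-moment bounds for $m_N$ and $e_N-3$ hold at time $0$; these quantities are conserved, so the bounds hold for all time. The remaining factors are empirical polynomial moments of degree up to about $n_1+1$ and $n_2+2$ (raised to powers), and they are controlled uniformly in time by Lemma \ref{lem:particle_moment_propagation}. This explains the index $n=8\max(n_0/2,n_1,n_2+4)$. The result is a bound of order $t^2A_T^2/N$.

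For the martingale term, I write $W(t)\le W(0)$-type control. More precisely, I use H\"older with the BDG inequality, bounding $\E[W(t)^2]^{1/2}\,\E\big[|\int_0^t \ud M|^4\big]^{1/2}$ by $C\,\E\big[\langle M\rangle_t^2\big]^{1/2}\lesssim (1+t)A_T^2/N$ via the moment propagation.

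The main obstacle is the bookkeeping of the drift defect. I must check that every term genuinely carries a fluctuation factor with an $N^{-1/2}$ rate. This includes the diagonal $j=i$ contributions, which vanish since $a(0)=0$ and $b(0)=0$, and the factor $1/N$ versus $1/N^2$ normalization. I must also ensure that the H\"older exponents (8 for the fluctuations, $8/7$ split among the weights) stay within $M_n$. If the weight $W(t)$ causes trouble inside the time integral, I would instead bound $W(t)\le C(1+e_N^2)$-type quantities using \eqref{eq:conditional_moment_bound} conditionally on $\bV(0)$.
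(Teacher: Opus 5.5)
Your proposal is correct and follows the paper's proof essentially step for step. You pull $\psi$ back to time $0$ with the dual backward solution and expand $\ud u_s(V^i(s))$ by It\^o's formula, so the drift defect is linear in the fluctuations $m_N$, $e_N-3$ and $Q(s)-E(s)$, which are controlled by their eighth moments and \eqref{eq:m16}; the martingale is handled by BDG, and the initial term by an i.i.d.\ fourth-moment bound.

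The only difference is cosmetic. The paper removes the weight $1+\frac1N\sum_i|V^i(t)|^4$ once at the outset by Cauchy--Schwarz and then bounds $\E|R_\psi^N(t)|^4$, whereas you split it off term by term. Your fallback remarks such as $W(t)\le C(1+e_N^2)$ are not valid pathwise, but they are also not needed.
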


\begin{proof}
    For simplicity, we use the following shorthand notation for the fluctuation 
    \begin{align*}
        R_\psi^N(t)=\frac{1}{N} \sum_{j=1}^N \psi(V^j(t))-\int_{\R^3} \psi(v) f_t(v) \ud v.
    \end{align*}

    {\bf Step 1: Removing the polynomial term.} By the Cauchy--Schwarz inequality, we have
    \begin{align*}
        \E \left[ \left( 1+\frac{1}{N} \sum_{i=1}^N |V^i(t)|^4 \right) \left| R_\psi^N(t) \right|^2 \right] \leq \left[ \E \left( 1+\frac{1}{N} \sum_{i=1}^N |V^i(t)|^4 \right)^2 \right]^{1/2} \left[ \E \left| R_\psi^N(t) \right|^4 \right]^{1/2}.
    \end{align*}
    Now the polynomial terms and the weight terms are separated. Applying Lemma \ref{lem:particle_moment_propagation} with $k=4$, we have
    \begin{equation}\label{eq:polynomial-term}
        \E \left( 1+\frac{1}{N} \sum_{i=1}^N |V^i(t)|^4 \right)^2 \leq 2+\frac{2}{N} \sum_{i=1}^N \E |V^i(t)|^8 \leq C(M_8+1).
    \end{equation}
    Therefore, it remains to prove the bound for $\E \left| R_\psi^N(t) \right|^4$.

    {\bf Step 2: Rewriting the weight term by duality.}  Fix $0 \leq t \leq T$, and let $(u_s)_{0 \leq s \leq t}$ be the solution of the dual backward equation constructed in Lemma \ref{lem:polynomial_backward_estimates}, with terminal condition $u_t=\psi$. We define the empirical measure
    \begin{equation*}
        \mu_N(s)= \frac{1}{N} \sum_{i=1}^N \delta_{V^i(s)}.
    \end{equation*}
    The guiding idea is to use the conservation condition \eqref{eq:conservation_condition} and Newton--Leibniz formula to rewrite
    \begin{equation*}
        R_\psi^N(t)= \langle u_t, \mu_N(t)-f_t \rangle=\langle u_0, \mu_N(0)-f_0 \rangle+\frac{1}{N} \sum_{i=1}^N \int_0^t \ud u_s \left( V^i(s) \right).
    \end{equation*}
    The first term is expected to be small since it is an initial fluctuation, and the second term carries the natural $O(1/N)$ order.

    We first rewrite the integral part. Applying the time-dependent It\^o's formula to $u_s \left( V^i(s) \right)$ and using the dual backward equation \eqref{eq:backward_linear_landau}, we have
    \begin{align}\label{eq:ito-phiv}
        \ud u_s \left( V^i(s) \right) = \Bigg[ \partial_s u_s \left( V^i(s) \right) &\, +\frac{2}{N} \sum_{j=1}^N b \left( V^i(s)-V^j(s) \right) \cdot \nabla u_s \left( V^i(s) \right) \nonumber\\
        &\, +\frac{1}{N} \sum_{j=1}^N a \left( V^i(s)-V^j(s) \right): \nabla^2 u_s \left( V^i(s) \right) \Bigg] \ud s \nonumber\\
        &\, + \frac{\sqrt{2}}{\sqrt{N}} \sum_{j=1}^N \nabla u_s \left( V^i(s) \right)^\top \sigma \left( V^i(s)-V^j(s) \right) \ud B_s^{i,j} \nonumber\\
        = &\, \left[ \left( a \ast \mu_N(s)-a \ast f_s \right):\nabla^2 u_s +2 \left( b \ast \mu_N(s)-b \ast f_s \right) \cdot \nabla u_s \right] \left( V^i(s) \right) \ud s +\ud M_s^i,
    \end{align}
    where the martingale term is given by
    \begin{equation*}
        M_t^i=\frac{\sqrt{2}}{\sqrt{N}} \sum_{j=1}^N \int_0^t \nabla u_s(V^i(s))^{\mathsf T} \sigma \left( V^i(s)-V^j(s) \right) \ud B_s^{i,j}.
    \end{equation*}
    Therefore, we substitute \eqref{eq:ito-phiv} into the formula of $R_\psi^N$ and get
    \begin{equation}\label{eq:empirical_backward_decomposition}
        R_\psi^N(t)= \langle u_0, \mu_N(0)-f_0 \rangle +\int_0^t \left\langle \mu_N(s), (a \ast \mu_N(s) -a \ast f_s):\nabla^2 u_s +2(b \ast \mu_N(s)-b \ast f_s) \cdot \nabla u_s \right\rangle \ud s+\mathcal M_t,
    \end{equation}
    where $\displaystyle \mathcal M_t=\frac{1}{N} \sum_{i=1}^N M_t^i$ is a martingale.
    
    It remains to estimate the three terms on the right-hand side. We begin with the initial fluctuation. Since the variables $(V^i(0))_{i=1}^N$ are i.i.d., the standard moment estimates for sums of centered independent random variables (similar to Lemma \ref{lem:initial_energy_fluctuation}) give
    \begin{equation}\label{eq:initial_fluctuation_fourth_moment}
        \E \left| \frac{1}{N} \sum_{i=1}^N u_0(V^i(0)) -\int_{\R^3} u_0 f_0 \right|^4 \leq \frac{C}{N^2} A_T^4 (M_{4n_0}+1).
    \end{equation}
    Here we used the polynomial growth estimate for $u_0$ from Lemma \ref{lem:polynomial_backward_estimates} and the corresponding moment assumption on $f_0$.

    {\bf Step 3: Estimating the martingale term.} We next estimate the fourth moment of the martingale term $\mathcal M_t$ in \eqref{eq:empirical_backward_decomposition}. Using $B^{j,i}=-B^{i,j}$ and $\sigma(-z)=\sigma(z)$, we symmetrize it as
    \begin{equation*}
        \mathcal M_t= \frac{1}{\sqrt{2N^3}} \sum_{i,j=1}^N \int_0^t \left[ \nabla u_s \left( V^i(s)\right)-\nabla u_s \left( V^j(s) \right) \right]^\top \sigma \left( V^i(s)-V^j(s) \right) \ud B_s^{i,j}.
    \end{equation*}
    Since $|\sigma(z)| \leq |z|$, the quadratic variation of $\mathcal M$ satisfies
    \begin{equation}\label{eq:martingale_quadratic_variation}
        \langle \mathcal M \rangle_t \leq \frac{C}{N^3} \sum_{i,j=1}^N \int_0^t |V^i(s)-V^j(s)|^2 \left| \nabla u_s(V^i(s))-\nabla u_s(V^j(s)) \right|^2 \ud s.
    \end{equation}
    The Hessian bound proved in Lemma \ref{lem:polynomial_backward_estimates} gives, by the mean-value theorem,
    \begin{equation*}
        \left| \nabla u_s(v)-\nabla u_s(w) \right| \leq CA_T |v-w| (1+|v|^{n_2}+|w|^{n_2}).
    \end{equation*}
    Substituting this estimate into \eqref{eq:martingale_quadratic_variation} yields
    \begin{equation*}
        \langle \mathcal M \rangle_t \leq \frac{C}{N} A_T^2 \int_0^t \left( 1+\frac{1}{N} \sum_{i=1}^N |V^i(s)|^{4+2n_2} \right) \ud s.
    \end{equation*}
    The square of the last empirical moment is uniformly integrable by Lemma \ref{lem:particle_moment_propagation}. Hence the Burkholder--Davis--Gundy inequality gives
    \begin{equation}\label{eq:martingale_fourth_moment}
        \E \left[ |\mathcal M_t|^4 \right] \leq C \E \left[ \langle \mathcal M \rangle^2_t \right] \leq \frac{C}{N^2} T^2 A_T^4 (M_{8+4n_2}+1).
    \end{equation}

    {\bf Step 4: Estimating the drift term.} It remains to estimate the drift term in \eqref{eq:empirical_backward_decomposition}. Recall the normalized empirical momentum, energy, and covariance matrix are defined by
    \begin{align*}
        m_N =\frac{1}{N} \sum_{i=1}^N V^i(s), \quad e_N =\frac{1}{N} \sum_{i=1}^N |V^i(s)|^2, \quad Q(s) =\frac{1}{N}\sum_{i=1}^N V^i(s) \otimes V^i(s).
    \end{align*}
    Since $f_s$ satisfies the normalization condition \eqref{eq:normalization-condition}, a direct computation gives
    \begin{align*}
        b \ast \mu_N(s) (v)-b \ast f_s(v) &\, = 2m_N,\\
        a \ast \mu_N(s) (v)-a \ast f_s(v) &\, = -2(v \cdot m_N) \Id +v \otimes m_N+ m_N \otimes v +(e_N-3) \Id -\left( Q(s)-E(s) \right).
    \end{align*}
    We first consider the part involving $b$ in the drift term, which writes 
    \begin{align*}
        \left\langle \mu_N(s), (b \ast \mu_N(s)-b \ast f_s) \cdot \nabla u_s \right\rangle= \frac{2}{N} m_N \cdot \sum_{i=1}^N \nabla u_s \left( V^i(s) \right).
    \end{align*}
    Hence, by the Cauchy--Schwarz inequality, the fourth moment is bounded by
    \begin{align}\label{eq:drift_b_fourth_moment}
        \E \left| \left\langle \mu_N(s), \left( b \ast \mu_N(s)(v)-b \ast f_s(v) \right)\cdot \nabla u_s \right\rangle \right|^4 \leq &\, \frac{C}{N^4} \E \left[ |m_N|^4 \left| \sum_{i=1}^N \nabla u_s (V^i(s)) \right|^4 \right] \nonumber \\
        \leq &\, \frac{C}{N^4} A_T^4 \underbrace{\left[ \E |m_N|^8 \right]^{1/2}}_{N^{-2}}\underbrace{\left[ \E  \left| \sum_{i=1}^N (1+|V^i(s)|^{n_1}) \right|^8 \right]^{1/2}}_{N^4} \nonumber \\
        \leq &\, \frac{C}{N^2} A_T^4 (M_{8n_1}+1).
    \end{align}
    Next for the part involving $a$ in the drift term, we write
    \begin{align*}
        &\, \left\langle \mu_N(s), (a \ast \mu_N(s)-a \ast f_s):\nabla^2 u_s \right\rangle\\
        &\, \quad= \frac{1}{N} \sum_{i=1}^N \Big[ -2 \left( V^i(s) \cdot m_N \right) \Id +V^i(s)\otimes m_N +m_N \otimes V^i(s)\\
        &\, \hspace{4cm} + (e_N-3) \Id -\left( Q(s)-E(s) \right) \Big] :\nabla^2 u_s \left( V^i(s)\right).
    \end{align*}
    Therefore, the fourth moment is bounded by the sum of the fourth moments of each term,
    \begin{align*}
        \E \left[ \left| \left\langle \mu_N(s), \left( a \ast \mu_N(s)(v)-a \ast f_s(v) \right):\nabla^2 u_s \right\rangle \right|^4 \right] \leq \frac{C}{N^4} \Bigg( \E &\, \left[ |m_N|^4 \left| \sum_{i=1}^N |V^i(s)|^4 |\nabla^2 u_s(V^i(s))| \right|^4 \right]\\
        +\E \left[ |e_N-3|^4 \left| \sum_{i=1}^N \nabla^2 u_s(V^i(s)) \right|^4 \right]+\E &\, \left[ |Q(s)-E(s)|^4 \left| \sum_{i=1}^N \nabla^2 u_s(V^i(s)) \right|^4 \right] \Bigg).
    \end{align*}
    The first two terms can be controlled directly by 
    \begin{align*}
        &\, \frac{C}{N^4} \left( \E \left[ |m_N|^4 \left| \sum_{i=1}^N |V^i(s)|^4 |\nabla^2 u_s(V^i(s))| \right|^4 \right] +\E \left[ |e_N-3|^4 \left| \sum_{i=1}^N \nabla^2 u_s(V^i(s)) \right|^4 \right] \right)\\
        \leq &\, \frac{C}{N^4} A_T^4 \left( \left[ \E |m_N|^8 \right]^{1/2} + \left[ \E |e_N-3|^8 \right]^{1/2} \right) \cdot \left[ \E \left| \sum_{i=1}^N (1+|V^i(s)|^{n_2+4}) \right|^8 \right]^{1/2}\\
        \leq &\, \frac{C}{N^2} A_T^4 M_{8n_2+32}.
    \end{align*}
    For the covariance term, by \eqref{eq:m16}, we get
    \begin{align*}
        \frac{C}{N^4} \E \left[ |Q(s)-E(s)|^4 \left| \sum_{i=1}^N \nabla^2 u_s(V^i(s)) \right|^4 \right] \leq &\, \frac{C}{N^4} \left[ \E |Q(s)-E(s)|^8 \right]^{1/2} \left[ \E \left| \sum_{i=1}^N \nabla^2 u_s(V^i(s)) \right|^8 \right]^{1/2}\\
        \leq &\, \frac{C}{N^6} A_T^4 \left[ \E \left| \sum_{i=1}^N (1+|V^i(s)|^{n_2}) \right|^8 \right]^{1/2}\\
        \leq &\, \frac{C}{N^2} A_T^4 (M_{8n_2}+1).
    \end{align*}
    Therefore, we get
    \begin{equation}\label{eq:weighted_coefficient_fourth_moment}
       \E \left| \left\langle \mu_N(s), \left( a \ast \mu_N(s)(v)-a \ast f_s(v) \right): \nabla^2 u_s \right\rangle \right|^4 \leq \frac{C}{N^2} A_T^4 (M_{8n_2+32}+1).
    \end{equation}
    We finish the proof by combining \eqref{eq:polynomial-term} \eqref{eq:initial_fluctuation_fourth_moment} \eqref{eq:martingale_fourth_moment} \eqref{eq:drift_b_fourth_moment} and \eqref{eq:weighted_coefficient_fourth_moment}.
\end{proof}

Now we apply the new law of large numbers to the Landau--Maxwellian setting.

\begin{prop}[Weighted LLN estimate]\label{prop:weighted_lln_ap}
    Under the same assumptions as in Theorem \ref{the:finite-time-entropy}, there exists a constant $C$ independent of $t$ and $N$, such that for every $0 \leq t \leq T$,
    \begin{equation}\label{eq:drift_lln}
        \int_{\R^{3N}} F_N(t) \frac{1}{N} \sum_{i=1}^N \left| b \ast f_t(v^i)-\frac{1}{N} \sum_{j=1}^N b(v^i-v^j) \right|^2 \ud V \leq \frac{C(1+T)^5}{N} M_{16},
    \end{equation}
    \begin{equation}\label{eq:cancel_estimate}
        \int_{\R^{3N}} F_N(t) \frac{1}{N} \sum_{i=1}^N \left| a \ast f_t(v^i)-\frac{1}{N} \sum_{j=1}^N a(v^i-v^j) \right|^2 \ud V \leq \frac{C(1+T)^5}{N} M_{16},
    \end{equation}
    \begin{equation}\label{eq:weighted_lln_assumption}
        \int_{\R^{3N}} F_N(t) \frac{1}{N} \sum_{i=1}^N \left| \frac{1}{N} \sum_{j=1}^N a(v^i-v^j) \nabla \log f_t(v^j) -(a \ast (f_t \nabla \log f_t))(v^i) \right|^2 \ud V \leq \frac{C(1+T)^5}{N} M_{72}.
    \end{equation}
\end{prop}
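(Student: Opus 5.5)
The first two estimates need no duality; we would use the explicit formulas for $b\ast\mu_N-b\ast f$ and $a\ast\mu_N-a\ast f$ already computed in Step 4 of Lemma \ref{lem:score_weighted_moment_lln}.

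\textbf{Estimates \eqref{eq:drift_lln} and \eqref{eq:cancel_estimate}.} Since $\frac1N\sum_j b(v^i-v^j)-b\ast f_t(v^i)=2m_N$, the left side of \eqref{eq:drift_lln} equals $4\E|m_N|^2\le C/N$ by independence at time $0$ and conservation of momentum. For \eqref{eq:cancel_estimate}, the pointwise difference is
\[
-2(v^i\cdot m_N)\Id+v^i\otimes m_N+m_N\otimes v^i+(e_N-3)\Id-(Q-E).
\]
Its square is bounded by $C(|v^i|^2|m_N|^2+|e_N-3|^2+|Q-E|^2)$. After averaging in $i$, the first term is controlled by Cauchy--Schwarz, Lemma \ref{lem:particle_moment_propagation}, and $\E|m_N|^4\le C/N^2$. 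The second is controlled by Lemma \ref{lem:initial_energy_fluctuation}, using that $e_N$ is conserved, and the third by \eqref{eq:entrywise_covariance_fluctuation}. This gives $C M_{16}/N$, uniformly in $t$.

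\textbf{Estimate \eqref{eq:weighted_lln_assumption}.} This is the main term, and the plan is to reduce it to Lemma \ref{lem:score_weighted_moment_lln}. Set $g=\nabla\log f_t$ and expand $a(v-w)=|v|^2\Id-2(v\cdot w)\Id+|w|^2\Id-v\otimes v+v\otimes w+w\otimes v-w\otimes w$, which separates the $v$ and $w$ dependence. Then
\[
\frac1N\sum_j a(v^i-v^j)g(v^j)-a\ast(fg)(v^i)
\]
is a finite sum of terms of the form $P_\ell(v^i)\,R^N_{\psi_\ell}(t)$. Here $P_\ell(v)$ is a monomial of degree at most $2$ in $v$, and $\psi_\ell(w)$ is one of the components of $g(w)$, $w_\alpha g(w)$, or $w_\alpha w_\beta g(w)$. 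Squaring and averaging over $i$, the left side of \eqref{eq:weighted_lln_assumption} is bounded by
\[
C\sum_\ell\E\Big[\Big(1+\tfrac1N\sum_i|V^i(t)|^4\Big)|R^N_{\psi_\ell}(t)|^2\Big],
\]
which has exactly the form treated in Lemma \ref{lem:score_weighted_moment_lln}.

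\textbf{Checking the hypotheses of Lemma \ref{lem:polynomial_backward_estimates}.} The key point is to verify \eqref{eq:psi_polynomial_bound} for each $\psi_\ell$ with $m=2$, uniformly on $[0,T]$. This is where Theorem \ref{the:logarithmic_derivative} with $m=3$ enters: $|\nabla^k g|\le C(1+T+|v|^2)^{(k+1)/2}$ for $k=0,1,2$. Consequently $|\nabla^r\psi_\ell|\le C(1+T)^{3/2}(1+|v|^{n_r})$, with roughly $n_0=3$, $n_1=3$, $n_2=5$ once the weight $w_\alpha w_\beta$ is included; one should bound $(1+T+|v|^2)^{k/2}\le C(1+T)^{k/2}(1+|v|^k)$. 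So $A_T\sim C(1+T)^{3/2}$. Lemma \ref{lem:score_weighted_moment_lln} then gives
\[
\frac{C}{N}(1+T)^2A_T^2M_n\le\frac{C(1+T)^5}{N}M_n,
\]
with $n=8\max(n_0/2,n_1,n_2+4)=72$, which matches $M_{72}$. The moment $M_{72}$ is finite by the Gaussian upper bound \eqref{con:gaussian-upper-bound}.

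\textbf{Main obstacle.} The delicate step is this bookkeeping of growth exponents and of the time dependence of $A_T$. Lemma \ref{lem:score_weighted_moment_lln} requires $\psi\in C^2$ with bounds uniform for $0\le s\le t\le T$, while the score is time dependent. The paper's convention therefore has to be that $A_T$ bounds $\psi=\psi_\ell$ at the fixed time $t\le T$, which suffices since the lemma is applied at each fixed $t$. The second-derivative bound on $\psi_\ell$ is precisely what forces third-order logarithmic derivative estimates, which is why $m=3$ is needed.
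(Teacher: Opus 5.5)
Your proposal is correct. For \eqref{eq:weighted_lln_assumption} it is the paper's argument. You use the same separation of variables in $a(v-w)$ and the same bound by $C(1+|v^i|^4)$ times the sum of $|R^N_\psi(t)|^2$ over $\psi$ a component of $\nabla\log f_t$, $v_\gamma\nabla\log f_t$ or $v_\gamma v_\eta\nabla\log f_t$. You then apply Lemma \ref{lem:score_weighted_moment_lln} with growth exponents supplied by Theorem \ref{the:logarithmic_derivative}. Your bookkeeping $A_T\sim(1+T)^{3/2}$ is exactly what produces the factor $(1+T)^5$, which the paper leaves implicit. One small correction: $n_1=4$ rather than $3$, because of the term $w_\alpha w_\beta\nabla g$, which grows like $(1+T)(1+|w|^4)$. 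This does not change $n=8\max(n_0/2,n_1,n_2+4)=72$.

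You depart from the paper on \eqref{eq:drift_lln} and \eqref{eq:cancel_estimate}. The paper routes these through the duality lemma too, treating the differences as fluctuations $R^N_\psi$ for the test functions $v_\gamma$, $v_\gamma v_\eta$ and $|v|^2$. You instead bound them directly, using the pathwise conservation of $m_N$ and $e_N$, Lemma \ref{lem:initial_energy_fluctuation}, $\E|m_N|^4\le C(M_4+1)/N^2$, and \eqref{eq:entrywise_covariance_fluctuation}. This is the direct route the paper mentions in the remark after the proposition. It gives the stronger bound $C(M_4+1)/N$, uniform in $t$ and with no $(1+T)^5$ factor. The paper chose the duality route only to show that one framework handles all three estimates, at the cost of a weaker, time-growing bound that still suffices for Theorem \ref{the:finite-time-entropy}.
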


\begin{rem}
    Actually \eqref{eq:drift_lln} and \eqref{eq:cancel_estimate} can be proved in a rather direct sense and bounded by slightly lower order moments by expanding the summation and repeatedly applying conservation laws. In \cite{carrillo2025relative} the law of large numbers is exactly \eqref{eq:cancel_estimate}, but the particle system dynamics is slightly different, and a proof with direct expansion is given there. Here we gather them together to show the generality of our duality argument. The essential estimate is indeed \eqref{eq:weighted_lln_assumption}, since the appearance of score function does not allow an explicit expansion formula.
\end{rem}

\begin{proof}
    We first prove \eqref{eq:drift_lln}. Recall the fluctuation function
    \begin{equation*}
        R_\psi^N(t)=\frac{1}{N} \sum_{j=1}^N \psi(V^j(t))- \int_{\R^3} \psi(v) f_t(v) \ud v.
    \end{equation*}
    In Step 4 of the proof of Lemma \ref{lem:score_weighted_moment_lln}, we have shown that $b \ast \mu_N(s)(v)-b \ast f_s(v)=2m_N$. Therefore, the quantity inside the square is equal to $R_\psi^N(t)$ with $\psi=-2z$. Applying Lemma \ref{lem:score_weighted_moment_lln} and noticing $n_0=1, n_1=n_2=0$, we immediately get \eqref{eq:drift_lln}.
    
    We next prove \eqref{eq:cancel_estimate}. In Step 4 of the proof of Lemma \ref{lem:score_weighted_moment_lln}, we have also shown that
    \begin{equation*}
        a \ast \mu_N(s) (v)- a \ast f_s(v)=-2(v \cdot m_N) \Id+v \otimes m_N+m_N \otimes v+(e_N-3) \Id-(Q(s)-E(s)).
    \end{equation*}
    Therefore, by the Cauchy--Schwarz inequality, we have the following entrywise estimate:
    \begin{align*}
        &\, \left| \left[ a \ast f_t(v^i)-\frac{1}{N} \sum_{j=1}^N a(v^i-v^j) \right]_{\alpha\beta} \right|^2 \\
        =&\, \left| \left( R_{|v|^2}^N(t)-2 v^i_\gamma R_{v_\gamma }^N(t) \right) \delta_{\alpha\beta}+ v^i_\alpha R_{v_\beta }^N(t)+ v^i_\beta R_{v_\alpha }^N(t)- R_{v_\alpha v_\beta}^N(t) \right|\\
        \leq &\, C(1+|v^i|^4) \sum_{\psi \in \mathcal S_t^1} \left| R_\psi^N(t) \right|^2,
    \end{align*}
    where the set of test functions $\mathcal S^1_t=\left\{v_\gamma, v_\gamma v_\eta: \gamma,\eta \in \{1,2,3\} \right\}$. Applying Lemma \ref{lem:score_weighted_moment_lln} again and noticing $n_0=2, n_1=1, n_2=0$, we get \eqref{eq:cancel_estimate}.
    
    Finally we prove \eqref{eq:weighted_lln_assumption}. For simplicity we denote by $p_t=\nabla \log f_t$ in this part. For each $\alpha\in\{1,2,3\}$, a careful expansion gives
    \begin{align*}
       \Bigg[ \frac{1}{N} \sum_{j=1}^N a(v^i-v^j) p_t(v^j)-(a \ast (f_tp_t)(v^i)) \Bigg]_\alpha &\, =\left( |v^i|^2 \delta_{\alpha\beta}- v^i_\alpha v^i_\beta \right) R_{p_\beta}^N(t)-2 v^i_\gamma R_{v_\gamma p_\alpha}^N(t)+ v^i_\alpha R_{v_\beta p_\beta}^N(t)\\
        &\, +v^i_\beta R_{v_\alpha p_\beta}^N(t)+ R_{|v|^2 p_\alpha}^N(t) -R_{v_\alpha v_\beta p_\beta}^N(t).  
    \end{align*}
    Therefore, by the Cauchy--Schwarz inequality, we have the following estimate:
    \begin{equation*}
        \left| \frac{1}{N} \sum_{j=1}^N a(v^i-v^j) p_t(v^j) -(a \ast (f_tp_t)(v^i)) \right|^2 \leq C(1+|v^i|^4) \sum_{\psi \in \mathcal S_t^2} \left| R_\psi^N(t) \right|^2,
    \end{equation*}
    where the set of test function $\mathcal S^2_t= \left\{ p_{\alpha}, v_\gamma p_{\alpha},v_\gamma v_\eta p_{\alpha}: \alpha, \gamma, \eta \in \{1,2,3\} \right\}$. And every $\psi \in \mathcal S^2_t$ satisfies \eqref{eq:psi_polynomial_bound} due to Theorem \ref{the:logarithmic_derivative}. Applying Lemma \ref{lem:score_weighted_moment_lln} again and this time noticing $n_0=3,n_1=4,n_2=5$ by Theorem \ref{the:logarithmic_derivative}, we get \eqref{eq:weighted_lln_assumption}.
\end{proof}

\section{Proof of Theorem \ref{the:finite-time-entropy}}\label{sec:finite_time}

In this section, we prove Theorem \ref{the:finite-time-entropy} by applying the law of large numbers in Section \ref{sec:lln} to the entropy decomposition in Lemma \ref{lem:evolution_entropy}. 

We recall from Lemma \ref{lem:evolution_entropy} the following time evolution formula of relative entropy.
\begin{align*}
    H_N \left( F_N(t)| f_t^{\otimes N} \right) &\, \leq H_N \left( F_N(0)| f_0^{\otimes N} \right) \\
    &\, +\int_0^t \int_{\R^{3N}} F_N(s) \frac{1}{N} \sum_{i=1}^N \left[ \frac{1}{N} \sum_{j=1}^N V_{f}(v^i,v^j) -\bar V_{f}(v^i) \right]\ud s,
\end{align*}
where the two-body mean-field defect functional $V_f$ is defined by
\begin{align*}
    V_{f}(v,w)= a(v-w): \frac{\nabla^2 f}{f}(v)&\, +2b(v-w) \cdot \nabla \log f(v)\\
    &\, -a(v-w):\nabla \log f(v) \otimes \nabla \log f(w),
\end{align*}
and $\bar V_f$ is the one-side expectation of $V_f$ under $f$:
\begin{equation*}
  \bar  V_{f}(v)=\int_{\R^3} V_{f}(v,w)f(w) \ud w.
\end{equation*}
{\bf Step 1: Decomposition of the defect functional.} We split $V_{f}=V_{f}^1+V_{f}^2+V_{f}^3$, where
\begin{equation*}
    V_{f}^1(v,w)=a(v-w):\frac{\nabla^2 f}{f}(v), \qquad V_{f}^2(v,w)=2b(v-w) \cdot \nabla \log f(v),
\end{equation*}
\begin{equation*}
    V_{f}^3(v,w)=-a(v-w): \nabla \log f(v) \otimes \nabla \log f(w).
\end{equation*}
For $\ell=1,2,3$, we define the corresponding marginal for $V_f^\ell$:
\begin{equation*}
    \bar V_{f}^\ell(v)=\int_{\R^3} V_{f}^\ell(v,w) f(w) \ud w
\end{equation*}
and the corresponding integral:
\begin{equation*}
    \mathcal I_\ell(s)=\frac{1}{N} \sum_{i=1}^N \int_{\R^{3N}} F_N(s) \left[ \frac{1}{N} \sum_{j=1}^N \left( V_{f}^\ell(v^i,v^j)-\bar V_{f}^\ell(v^i) \right) \right] \ud V .
\end{equation*}
Therefore, Lemma \ref{lem:evolution_entropy} implies
\begin{equation}\label{eq:entropy_by_Iell}
    H_N(F_N(t)|f_t^{\otimes N}) \leq H_N(F_N(0)|f_0^{\otimes N})+ \int_0^t \left( \mathcal I_1(s)+\mathcal I_2(s)+\mathcal I_3(s) \right) \ud s.
\end{equation}
{\bf Step 2: Estimate of $\mathcal I_1$.}
By the elementary identity
\begin{equation*}
    \frac{\nabla^2 f_s}{f_s}=\nabla^2 \log f_s+\nabla \log f_s \otimes \nabla \log f_s,
\end{equation*}
and thus the logarithmic derivative estimates in Theorem \ref{the:logarithmic_derivative} imply
\begin{equation*}
    \left| \frac{\nabla^2 f_s}{f_s}(v) \right| \leq C(1+s)(1+|v|^2).
\end{equation*}
Using the Cauchy--Schwarz inequality, and applying \eqref{eq:cancel_estimate} and Lemma \ref{lem:particle_moment_propagation} with $k=2$, we obtain
\begin{align}\label{eq:I1_bound}
    |\mathcal I_1(s)| &\, \leq \left[ \int F_N(s) \frac{1}{N} \sum_{i=1}^N \left| \frac{1}{N} \sum_{j=1}^N a(v^i-v^j)-a \ast f_s(v^i) \right|^2 \ud V \right]^{1/2} \cdot \left[ \int F_N(s) \frac{1}{N} \sum_{i=1}^N \left| \frac{\nabla^2 f_s}{f_s}(v^i) \right|^2 \ud V \right]^{1/2} \nonumber \\
    &\, \leq \frac{C(1+T)^4}{\sqrt N} (M_{16}+1).
\end{align}
{\bf Step 3: Estimate of $\mathcal I_2$.} We have the explicit formula
\begin{equation*}
    \mathcal I_2(s)=2 \int_{\mathbb R^{3N}} F_N(s) \frac{1}{N} \sum_{i=1}^N \nabla \log f_s(v^i) \cdot \left[ \frac{1}{N} \sum_{j=1}^N b(v^i-v^j)-(b \ast f_s)(v^i) \right] \ud V.
\end{equation*}
Using the Cauchy--Schwarz inequality, and applying \eqref{eq:drift_lln} and the logarithmic
growth bound on $\nabla \log f$ in Theorem \ref{the:logarithmic_derivative}, we obtain
\begin{align}\label{eq:I2_bound}
    |\mathcal I_2(s)| &\, \leq 2 \left( \int F_N(s) \frac{1}{N} \sum_{i=1}^N |\nabla \log f_s(v^i)|^2 \ud V \right)^{1/2} \cdot \left( \int F_N(s) \frac{1}{N} \sum_{i=1}^N \left| \frac{1}{N} \sum_{j=1}^N b(v^i-v^j)-(b \ast f_s)(v^i) \right|^2 \ud V \right)^{1/2} \nonumber \\
    &\, \leq \frac{C(1+T)^4}{\sqrt N} (M_{16}+1).
\end{align}
{\bf Step 4: Estimate of $\mathcal I_3$.} Since $(a \ast (f_t \nabla \log f_t))(v)=(b \ast f_t)(v)=-2v,$ we have
\begin{equation*}
    \bar V_f^3(v)=-\nabla \log f_t(v) \cdot ( a \ast ( f_t \nabla \log f_t))(v)=2v \cdot \nabla \log f_t(v).
\end{equation*}
Consequently, the fluctuation part is given by 
\begin{equation*}
    \frac{1}{N} \sum_{j=1}^N \left( V_f^3(v^i,v^j)-\bar V_f^3(v^i)\right)=- \nabla \log f_t(v^i) \cdot \left( \frac{1}{N} \sum_{j=1}^N a(v^i-v^j) \nabla \log f_t(v^j)-(a \ast (f_t \nabla \log f_t))(v^i) \right).
\end{equation*}
Using the Cauchy--Schwarz inequality, and applying \eqref{eq:weighted_lln_assumption} in Proposition \ref{prop:weighted_lln_ap} and the logarithmic growth bound on $\nabla \log f$, we obtain
\begin{align}\label{ineq: I3}
    \left| \mathcal I_3(t) \right| &\, \leq \left( \int_{\R^{3N}} F_N(t) \frac{1}{N} \sum_{i=1}^N |\nabla \log f_t (v^i)|^2 \ud V \right)^{1/2} \nonumber \\
    &\, \times \left( \int_{\R^{3N}} F_N(t) \frac{1}{N} \sum_{i=1}^N \left| \frac{1}{N} \sum_{j=1}^N a(v^i-v^j) \cdot \nabla \log f_t(v^j) -(a \ast (f_t \nabla \log f_t))(v^i) \right|^2 \ud V \right)^{1/2} \nonumber \\
    &\, \leq \frac{C(1+T)^4}{\sqrt{N}} (M_{72}+1).
\end{align}
{\bf Step 5: Conclusion.} Since we take $F_N(0)=f_0^{\otimes N}$, the initial relative entropy is zero. Combining \eqref{eq:entropy_by_Iell} with \eqref{eq:I1_bound} \eqref{eq:I2_bound} \eqref{ineq: I3} yields, for every $0 \leq t \leq T$,
\begin{equation*}
    H_N(F_N(t)| f_t^{\otimes N}) \leq \int_0^t \left( |\mathcal I_1(s)|+|\mathcal I_2(s)|+|\mathcal I_3(s)| \right) \ud s \leq \frac{C(1+T)^5}{\sqrt N} (M_{72}+1).
\end{equation*}
This proves Theorem \ref{the:finite-time-entropy}.

\section{Uniform-in-time propagation of chaos}\label{sec:uniform_time}

In this section, we prove Theorem \ref{the:time_uniform} and Corollary \ref{cor:uniform-in-time-poc}. We first decompose the relative entropy as
\begin{equation*}
    H_N(F_N|f^{\otimes N})=\frac{1}{N} \int_{\R^{3N}} F_N \log \frac{F_N}{\gamma_N} \ud V-\frac{1}{N} \int_{\R^{3N}} F_N \log \frac{f^{\otimes N}}{\gamma^{\otimes N}} \ud V,
\end{equation*}
where $\gamma$ represents the standard Gaussian distribution
\begin{equation*}
    \gamma(v)=(2\pi)^{-3/2}e^{-|v|^2/2}, \quad \gamma_N=\gamma^{\otimes N}.
\end{equation*}
We first deal with the first term, which is exactly $H_N(F_N|\gamma_N)$.

In this section, we denote by
\begin{equation*}
    I(\rho)=\int |\nabla\rho|^2/\rho
\end{equation*}
the unnormalized absolute Fisher information, in the ambient dimension of $\rho$. 

\begin{lem}\label{lem:FN_gammaN_uniform}
    Assume the hypotheses of Theorem \ref{the:time_uniform}. For any $\delta>0$, there exists a constant $C_\delta>0$ independent of $t$ and $N$, such that for every $t \geq 0$,
    \begin{equation*}
        H_N(F_N|\gamma_N)=\frac{1}{N} \int_{\R^{3N}} F_N \log \frac{F_N}{\gamma_N} \ud V \leq C_\delta \left( \frac{1}{\sqrt N}+\frac{1}{(1+t)^\delta} \right).
    \end{equation*}
\end{lem}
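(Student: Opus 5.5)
The plan is to exploit the fact that $\gamma_N$ is a stationary solution of the master equation \eqref{eq:master-equation}. Indeed, $(\nabla_{v^i}-\nabla_{v^j})\gamma_N=-(v^i-v^j)\gamma_N$ and $a(z)z=0$, so every collision flux vanishes on $\gamma_N$. Hence $t\mapsto H_N(F_N(t)|\gamma_N)$ is nonincreasing, and
\[
H_N(F_N(t)|\gamma_N)\le H_N(f_0^{\otimes N}|\gamma_N)=H(f_0|\gamma)\le \log(1+A_0)
\]
uniformly in $N$ and $t$. This gives the trivial $O(1)$ bound. The whole difficulty is to produce a decay in $t$ that is uniform in $N$, up to an $N^{-1/2}$ error.

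\textbf{Step 1: split off the conserved quantities.} Since $m_N$ and $e_N$ in \eqref{eq:particle-conservation} are conserved pathwise, I would disintegrate $F_N(t)$ and $\gamma_N$ along the map $V\mapsto(m_N,e_N)$. By the chain rule for relative entropy,
\[
N\,H_N(F_N|\gamma_N)=H(\mathrm{Law}(m_N,e_N)\,|\,\gamma_N\circ(m_N,e_N)^{-1})+\E\, H(F_N^{m,e}|\gamma_N^{m,e}),
\]
where $\gamma_N^{m,e}$ is the uniform measure on the Boltzmann sphere $\{m_N=m,e_N=e\}$. The first term is time independent. Using local CLT comparisons with the four-dimensional Gaussians having the same means (both laws share $m=0$, $e=3$ in expectation, with covariances of order $1/N$), I expect it to be $O(1)$ after the Gaussian upper bound \eqref{con:gaussian-upper-bound}, or at worst $O(\sqrt N)$. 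After dividing by $N$, this contributes at most $C/\sqrt N$. If the local CLT is delicate, I would instead bound it crudely through the exponential moments of $e_N$, which is where the propagation of exponential moments enters.

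\textbf{Step 2: relaxation on each sphere via HWI.} On each sphere, the conditional law $F_N^{m,e}(t)$ solves the same master equation and $\gamma_N^{m,e}$ is its equilibrium. I would apply the HWI inequality on the sphere in normalized form (the sphere has nonnegative Ricci curvature up to the scaling):
\[
\tfrac1N H(F_N^{m,e}|\gamma_N^{m,e})\le \Big(\tfrac1N W_2^2(F_N^{m,e},\gamma_N^{m,e})\Big)^{1/2}\Big(\tfrac1N I(F_N^{m,e}|\gamma_N^{m,e})\Big)^{1/2}.
\]
The two factors are handled separately.
\begin{enumerate}
\item \emph{Fisher information.} The normalized relative Fisher information should stay bounded uniformly in time by a Bernstein/$\Gamma$-calculus computation analogous to Lemma \ref{lem:higher_order_quotient}. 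Its initial value is controlled by the Fisher information bound of $f_0$ (Remark after Theorem \ref{the:finite-time-entropy}) and by second moments, and the rotation structure of $a$ makes the commutators skew, exactly as in Lemma \ref{lem:higher_order_quotient}.
\item \emph{Wasserstein distance.} For the normalized $W_2$, I would invoke the algebraic relaxation toward equilibrium on the Boltzmann sphere, uniform in $N$, in the spirit of Carrapatoso \cite{carrapatoso2016propagation}. This gives a rate $(1+t)^{-2\delta'}$ for any $\delta'$, with constants depending on high moments. The moments are uniformly controlled by Lemma \ref{lem:particle_moment_propagation}, together with exponential moments derived from the conditional bound \eqref{eq:conditional_moment_bound}.
\end{enumerate}
Interpolating the bounded Fisher information against the decaying $W_2$ yields $C_\delta(1+t)^{-\delta}$ for the conditional part.

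\textbf{Step 3: averaging over spheres.} Finally I would take the expectation over $(m_N,e_N)$. The constants in Step 2 depend polynomially or exponentially on $e$, and spheres with atypical $e$ have small probability by Lemma \ref{lem:initial_energy_fluctuation} and the Gaussian tails. On such spheres one falls back on the trivial bound $\log(1+A_0)$ coming from monotonicity.

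The main obstacle is Step 2: obtaining a relaxation rate on the sphere that is genuinely uniform in $N$ and depends only on normalized quantities. Spectral-gap arguments in $L^2$ are useless here, because $\|f_0^{\otimes N}/\gamma_N\|_{L^2(\gamma_N)}^2=(1+A_0)^N$ grows exponentially in $N$. I would therefore first try to adapt Carrapatoso's uniform algebraic entropy/Wasserstein relaxation and couple it with the uniform Fisher information bound through HWI. If only a $W_1$ rate is available, I would upgrade it to $W_2$ by interpolating with uniform high moments, accepting the loss in the exponent, which the arbitrary $\delta$ in the statement permits.
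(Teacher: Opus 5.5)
Your outline is sound, but it takes a genuinely different route: you split the \emph{entropy} along the conserved quantities, whereas the paper splits only the \emph{Wasserstein distance}.

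The paper proceeds in three moves.
\begin{itemize}
\item It applies the flat Gaussian HWI inequality once on $\R^{3N}$: $H_N(F_N|\gamma_N)\le N^{-1/2}\cW_2(F_N,\gamma_N)\,I_N(F_N|\gamma_N)^{1/2}$.
\item It bounds $I_N(F_N|\gamma_N)=\frac1N I(F_N)-3\le I(f_0)$, using monotonicity of the Fisher information \cite{carrillo2025fisher} and energy conservation.
\item It uses the triangle inequality through the renormalization $V^i\mapsto (V^i-m)/S$ onto $\mathcal K^N$. The two renormalization legs satisfy $\cW_2^2/N\le 3\E(S-1)^2+\E|m|^2\le C/N$ by elementary moment bounds. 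The middle leg is the relaxation estimate \cite[Theorem 24]{fournier2017kac}, applied to the single law $\widehat F_N$.
\end{itemize}
So the paper needs nothing about $\mathrm{Law}(m_N,e_N)$ beyond fourth moments. Its $N^{-1/2}$ is the price of multiplying those $O(N^{-1/2})$ legs by a non-decaying Fisher bound.

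Your chain rule instead creates the time-independent term $H(\mathrm{Law}(m_N,e_N)|\cdots)$, which the paper never has to touch. This term is in fact $O(1)$. Once completed, your route would therefore even give $C/N+C_\delta(1+t)^{-\delta}$. That gain is irrelevant downstream, since Theorem \ref{the:finite-time-entropy} is already of order $N^{-1/2}$, and it costs work the paper avoids.

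\textbf{Points to tighten.}

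\emph{The conserved-quantity term.} Moments or exponential moments of $e_N$ cannot bound this relative entropy from above, because you need a lower bound on the differential entropy of $(m_N,e_N)$. So your fallback fails. A clean fix is the entropy power inequality in $\R^4$, applied to the i.i.d.\ pairs $(v^{2k-1}+v^{2k},\,|v^{2k-1}|^2+|v^{2k}|^2)$. These have bounded densities because $f_0\in L^\infty$ with a Gaussian upper bound. This gives $h(m_N,e_N)\ge -2\log N-C$. For the explicit $\chi^2$-type reference density $q$ one has $-\E\log q=-2\log N+O(1)$, so the term is $O(1)$.

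\emph{The Fisher information bound.} Lemma \ref{lem:higher_order_quotient} by itself only gives growth: the $4kE_k$ term comes from the drift. However, in relative coordinates each pair operator in \eqref{eq:master-equation} equals $4\sum_{p<q}\Omega_{pq}^2$ acting on $v^i-v^j$, with no drift and no $B$ part. The same skew-commutator computation then yields $\frac{\ud}{\ud t}I(F_N)\le 0$. You only need the average $\E\, I_{\mathrm{tan}}(F_N^{m,e})\le I(F_N)\le N I(f_0)$, and you should average the sphere-wise HWI by Cauchy--Schwarz.

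\emph{Your Step 3.} The averaging above makes Step 3 unnecessary, which is fortunate, because its fallback is wrong as stated. Monotonicity bounds $H(F_N^{m,e}(t)|\gamma_N^{m,e})$ by the conditional initial entropy. That quantity is controlled by $NH(f_0|\gamma)$ only on average, not sphere by sphere.

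\emph{The relaxation input.} What you need is stronger than what the paper needs. You need $N$-uniform $\cW_2$ relaxation for every conditional initial law, with constants linear in its moments so that they average. The paper only needs it for $\widehat F_N(0)$. Since $\widehat F_N$ is a mixture of the rescaled conditionals, per-sphere bounds imply the paper's bound by convexity, not conversely.
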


\begin{proof}
    \emph{Step 1. Bounding the entropy by the HWI inequality.}
    We apply the HWI inequality for Gaussian variable, see \cite[Corollary 9.3.3]{bakry2013analysis}, and obtain
    \begin{align*}
        H_N(F_N|\gamma_N) \leq \frac{\cW_2(F_N,\gamma_N)}{\sqrt N} \left( I_N(F_N|\gamma_N)\right)^{1/2}.
    \end{align*}
    Here \(\cW_2\) denotes the quadratic Wasserstein distance on \(\R^{3N}\), and
    \begin{equation*}
        I_N(F_N|\gamma_N)= \frac{1}{N} \sum_{i=1}^N \int_{\R^{3N}} F_N \left| \nabla_{v^i} \log \frac{F_N}{\gamma_N} \right|^2 \ud V
    \end{equation*}
    is the normalized relative Fisher information between $F_N$ and $\gamma^{\otimes N}$. Since
    \begin{equation*}
        \nabla_{v^i} \log \left( \frac{F_N}{\gamma_N} \right)=\nabla_{v^i} \log F_N+v^i,
    \end{equation*}
    we deduce the following identity
    \begin{equation*}
        I_N(F_N|\gamma_N)=\frac{1}{N} I(F_N)-6+\frac{1}{N} \sum_{i=1}^N \int_{\R^{3N}} |v^i|^2 F_N \ud V.
    \end{equation*}
    By monotonicity of the Fisher information for the particle system (see \cite{carrillo2025fisher}) and conservation of kinetic energy, we have
    \begin{equation*}
        I_N(F_N|\gamma_N) =\frac{1}{N} I(F_N)-3 \leq I(f_0)-3 \leq I(f_0).
    \end{equation*}
    Therefore, it only remains to estimate the normalized Wasserstein distance
    \begin{equation*}
        \frac{\cW_2(F_N,\gamma_N)}{\sqrt N}.
    \end{equation*}

    \noindent\emph{Step 2. Control of \(\cW_2(F_N,\gamma_N)\).}
    We introduce the Boltzmann sphere
    \begin{equation*}
        \mathcal{K}^N= \left\{ V \in \R^{3N}: \sum_{i=1}^N |v_i|^2 = 3N, \quad \sum_{i=1}^N v_i = 0 \right\}.
    \end{equation*}
    We now renormalize the particle system onto \(\mathcal{K}^N\). Hereinafter we omit the time index $t$ for simplicity. Define the renormalized velocity vector
    \begin{equation*}
        \widehat{\bV}=(\widehat V^1,\cdots,\widehat V^N), \quad \widehat V^i=\frac{V^i-m(\bV)}{S(\bV)},
    \end{equation*}
    where we denote by
    \begin{equation*}
        m(\bV)=\frac{1}{N} \sum_{i=1}^N V^i, \quad S^2(\bV)=\frac{1}{3N} \sum_{i=1}^N |V^i-m(\bV)|^2
    \end{equation*}
    the pointwise conserved average momentum and variance. The renormalized system $\widehat{\bV}$ is supported on the Boltzmann sphere $\mathcal{K}^N$. By pointwise conservation of momentum and kinetic energy, \(m(\bV)\) and \(S(\bV)\) are constant and thus equal to their initial values \(m(\bV_0)\) and \(S(\bV_0)\). We simply write them as \(m\) and \(S\).
    
    Let \(\widehat F_N\) denote the law of \(\widehat{\bV}\), and define \(\widehat\gamma_N\) analogously from \(\gamma^{\otimes N}\). Then by the triangle inequality,
    \begin{equation}\label{ineq:tri_decom}
        \frac{\cW_2(F_N,\gamma_N)}{\sqrt N} \leq \frac{\cW_2(F_N,\widehat F_N)}{\sqrt N}+ \frac{\cW_2(\widehat F_N,\widehat\gamma_N)}{\sqrt N}+\frac{\cW_2(\widehat\gamma_N,\gamma_N)}{\sqrt N}.
    \end{equation}
    The first term on the right-hand side can be estimated directly. The normalization map $\bV \mapsto \widehat{\bV}$ provides a transport plan. Since $S \neq 0$ almost surely, we have
    \begin{align*}
	    \frac{\cW_2^2(F_N,\widehat F_N)}{N} \leq \frac{1}{N} \E \left[ \sum_{i=1}^N |V_t^i-\widehat V_t^i|^2 \right] =\frac{1}{N} \E \left[ \sum_{i=1}^N \left|(1-S^{-1})V_t^i+mS^{-1} \right|^2 \right]=3\E (S-1)^2+\E |m|^2.
    \end{align*}
    Since the random variables \(V_0^i\) are i.i.d. with mean zero and second moment \(3\), and have finite fourth moments, we obtain
    \begin{align*}
	    \E (S-1)^2 \leq \E (S^2-1)^2 \leq \frac{2}{9N^2} \E \left[ \left| \sum_{i=1}^N (|V^i(0)|^2-3) \right|^2 \right]+ \frac{2}{9} \E |m|^4 \leq \frac{C}{N} (M_4+1).
    \end{align*}
    Moreover, we have \(\E[|m|^2]=3/N\) by direct computations. Hence, we have
    \begin{equation*}
        \frac{\cW_2^2(F_N,\widehat F_N)}{N} \leq \frac{C}{N}.
    \end{equation*}
    By the same argument, we have
    \begin{equation*}
        \frac{\cW_2^2(\widehat\gamma_N,\gamma_N)}{N} \leq \frac{C}{N}.
    \end{equation*}

    On the other hand, \(\widehat\gamma_N\) is exactly the uniform probability measure on the Boltzmann sphere \(\mathcal{K}^N\) by \cite[Proof of Lemma 4.3]{rousset2014n}. Therefore, by \cite[Theorem 24]{fournier2017kac}, we obtain
    \begin{equation*}
        \frac{\cW_2^2(\widehat F_N,\widehat\gamma_N)}{N} \leq \frac{C_\delta}{(1+t)^{2\delta}} \left( 1+\int_{\R^3} |v|^{8+8\delta} f_0(v) \ud v \right).
    \end{equation*}
    Combining this estimate with \eqref{ineq:tri_decom}, we conclude that
    \begin{equation*}
        \frac{\cW_2(F_N,\gamma_N)}{\sqrt N} \leq C_\delta \left( \frac{1}{\sqrt N}+\frac{1}{(1+t)^\delta} \right),
    \end{equation*}
    which completes the proof.
\end{proof}

We now turn to the second term, which writes
\begin{equation*}
    \frac{1}{N} \int_{\R^{3N}} F_N \log \frac{f^{\otimes N}}{\gamma_N} \ud V.
\end{equation*}
We first establish a propagation-of-exponential-moments estimate for the particle system \eqref{eq:particle-sde}.

\begin{prop}[Propagation of exponential moments]\label{prop:landau_exponential_moments}
    Assume that $f_0$ is exponentially integrable:
    \begin{equation*}
        \int_{\R^3} e^{\sigma_0 |v|^2} f_0(v) \ud v<\infty
    \end{equation*}
    for some \(\sigma_0>0\). Then there exists \(a_0>0\) such that
    \begin{equation*}
        \sup_{t \geq 0} \E \left[ e^{a_0|V^1(t)|^2} \right]= \sup_{t \geq 0}\int_{\R^3} e^{a_0|v|^2} F_N^1(t,v) \ud v \leq 4.
    \end{equation*}
    More precisely, one may take
    \begin{equation*}
        a_0=\frac{\sigma_0}{2e \left( \E [e^{\sigma_0|V^1(0)|^2}]+1 \right)}.
    \end{equation*}
\end{prop}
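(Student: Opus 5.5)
The plan is to expand the exponential into its Taylor series and use the conditional moment bound \eqref{eq:conditional_moment_bound} from Lemma \ref{lem:particle_moment_propagation} term by term. Since \eqref{eq:conditional_moment_bound} holds uniformly in $t$ and $N$, the resulting bound will be uniform as well. Writing
\begin{equation*}
    \E\left[e^{a_0|V^1(t)|^2}\right]=\sum_{k\geq 0}\frac{a_0^k}{k!}\E\left[|V^1(t)|^{2k}\right],
\end{equation*}
we condition on $\bV(0)$ and bound each moment by $\max\{|V^1(0)|^{2k},(ke_N)^k\}\leq |V^1(0)|^{2k}+k^k e_N^k$. Here we drop the factor $(1+(k-1)/N)^{-k}\leq 1$.

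Summing the first contribution gives $\E[e^{a_0|V^1(0)|^2}]$. Since $a_0\leq\sigma_0/(2e)<\sigma_0$, Jensen's inequality gives $\E[e^{a_0|V^1(0)|^2}]\leq \E[e^{\sigma_0|V^1(0)|^2}]^{a_0/\sigma_0}$. With $K:=\E[e^{\sigma_0|V^1(0)|^2}]$ and $a_0/\sigma_0\leq 1/(2e(K+1))$, we get $K^{a_0/\sigma_0}\leq e^{\log K/(2eK)}\leq 2$, using $\log K\leq K$. So the first part is at most $2$.

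For the second contribution, Stirling's inequality $k^k/k!\leq e^k$ gives the bound $\sum_k (e a_0)^k\E[e_N^k]$. This requires controlling $\E[e_N^k]$. By convexity of $x\mapsto x^k$ and exchangeability, $\E[e_N^k]\leq \E|V^1(0)|^{2k}$. Then $|v|^{2k}\leq k!\,\sigma_0^{-k}e^{\sigma_0|v|^2}$ yields $\E[e_N^k]\leq k!\,\sigma_0^{-k}K$. This is too lossy, because the factor $k!$ kills summability.

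I would therefore switch to a convexity argument. Jensen applied to the average gives $e^{\lambda e_N}\leq \frac1N\sum_i e^{\lambda|V^i(0)|^2}$, so $\E[e^{\lambda e_N}]\leq \E[e^{\lambda|V^1(0)|^2}]$ for any $\lambda\leq\sigma_0$. Then $\E[e_N^k]\leq k!\,\lambda^{-k}\E[e^{\lambda e_N}]\leq k!\,\lambda^{-k}K$ has the same defect. The right step is instead to use $k^k e_N^k\leq e^k k!\,e_N^k$, so that
\begin{equation*}
    \sum_{k}\frac{a_0^k k^k e_N^k}{k!}\leq \sum_k (e a_0 e_N)^k=\frac{1}{1-ea_0e_N}
\end{equation*}
when $ea_0e_N<1$. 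This is still problematic pathwise, since $e_N$ is unbounded.

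A cleaner route is to split on the event $\{e_N\leq 1/(2ea_0)\}$. On this event, $\sum_{k\geq1}(ea_0e_N)^k\leq 2ea_0e_N$. On the complement, I would use the cruder bound $\E[e^{a_0|V^1(t)|^2}\mid\bV(0)]\leq \sum_i e^{a_0|V^i(t)|^2}\leq N e^{a_0Ne_N}$, which is terrible. So I would rather bound the sum by $\E[(ea_0e_N)^k]$ directly. Using $e_N\leq \sigma_0^{-1}\log(\frac1N\sum_i e^{\sigma_0|V^i(0)|^2})$ and the fact that, by Jensen, $\E[e^{s e_N}]\leq K^{s/\sigma_0}$ for $s\leq\sigma_0$, we can write
\begin{equation*}
    \sum_{k\geq1}(ea_0)^k\E[e_N^k]=\E\Big[\frac{ea_0e_N}{1-ea_0e_N}\Big].
\end{equation*}
This needs $e_N<1/(ea_0)$ almost surely, which is false.

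So the genuine obstacle is summing $k^k a_0^k e_N^k/k!$. I expect the intended fix uses $(ke_N)^k$ only through $\E[e_N^k]\leq \E|V^1(0)|^{2k}\leq k!\,(2/\sigma_0)^k\cdot\frac{K}{2^k}$-type bounds with the precise constant $a_0=\sigma_0/(2e(K+1))$. That gives terms like $(1/(K+1))^k\cdot K$ times a $k^k/k!\leq e^k$ factor absorbed by the $2e$. I would pursue that bookkeeping, aiming for a total of $2+2=4$.
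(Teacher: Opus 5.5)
Your setup (Taylor expansion, the conditional bound \eqref{eq:conditional_moment_bound}, and the estimate $\E[e^{a_0|V^1(0)|^2}]\le 2$) agrees with the paper. You correctly located the obstacle, but you never overcome it, and the route you commit to cannot work. The decisive error is discarding the factor $(1+\frac{k-1}{N})^{-k}$. Without it, the conditional bound $\sum_{k\ge1}\frac{(a_0ke_N)^k}{k!}$ has terms at least $(ea_0e_N)^k/(e\sqrt{k})$ by Stirling. It is therefore $+\infty$ on the event $\{e_N\ge 1/(ea_0)\}$. Since $e_N\ge |V^1(0)|^2/N$, this event has positive probability whenever $f_0$ has unbounded support, for instance under the Gaussian lower bound \eqref{eq:initial-gaussian-bounds} in force where the proposition is used. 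So no bookkeeping can rescue the version without that factor.

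Your closing suggestion also double counts. After $\E[e_N^k]\le \E|V^1(0)|^{2k}\le k!\,\sigma_0^{-k}K$, the $k!$ cancels the Taylor $1/k!$. What remains is $K(a_0k/\sigma_0)^k=K\bigl(k/(2e(K+1))\bigr)^k$, which diverges; the inequality $k^k/k!\le e^k$ cannot be spent a second time.

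The missing idea is to use the \emph{independence} of the initial velocities, not only their marginal law. Your Jensen bounds only reach parameters $\lambda\le\sigma_0$, so they only give $\E[e_N^k]\lesssim k!\,\sigma_0^{-k}$. Independence together with $e^{sy}\le 1-s+se^{y}$ for $s\in[0,1]$ gives, for $0\le \ell\le N$,
$\E[e^{\ell\sigma_0 e_N}]=\bigl(\E[e^{\frac{\ell}{N}\sigma_0|V^1(0)|^2}]\bigr)^N\le \bigl(1+\tfrac{\ell}{N}(K-1)\bigr)^N\le e^{\ell(K-1)}$.
Then $y^k\le k!\,\ell^{-k}e^{\ell y}$ yields $\E[(\sigma_0e_N)^k]\le e^{\ell(K-1)}k!\,\ell^{-k}$.

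For $k\le N(K-1)$, take $\ell=k/(K-1)$. This gives $\frac{(a_0k)^k}{k!}\E[e_N^k]\le \bigl(a_0e(K-1)/\sigma_0\bigr)^k\le 2^{-k}$. In other words, $e_N$ has geometrically bounded moments, which is exactly what absorbs $k^k/k!$.

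For $k>N(K-1)$, $\ell$ is capped at $N$, and the factor you dropped becomes essential:
$\bigl(\frac{a_0k}{N\sigma_0}\bigr)^k(1+\frac{k-1}{N})^{-k}=\bigl(\frac{a_0k}{\sigma_0(N+k-1)}\bigr)^k\le (a_0/\sigma_0)^k$.
Since also $e^{N(K-1)}<e^k$, the term is at most $(a_0e/\sigma_0)^k\le 2^{-k}$.

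The two geometric series sum to at most $2$, which together with your initial term gives the bound $4$.
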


\begin{proof}
    For simplicity, we denote by
    \begin{equation*}
        K_0 =\E \left[ e^{\sigma_0|V^1(0)|^2} \right]<\infty,
    \end{equation*}
    and recall that $e_N$ represents the average kinetic energy. For any \(k\ge 1\), we define
    \begin{equation*}
        m_k(t)=\E \left[ |V^1(t)|^{2k} | \bV_0 \right].
    \end{equation*}
    By \eqref{eq:conditional_moment_bound} in Lemma \ref{lem:particle_moment_propagation}, we already have 
    \begin{equation*}
        m_k(t) \leq \max \left\{ |V^1(0)|^{2k}, (ke_N)^k \left( 1+\frac{k-1}{N} \right)^{-k}\right\}.
    \end{equation*}
    We now pass to the exponential moment by Taylor's expansion. For any \(c\in(0,\sigma_0]\), we compute
    \begin{align*}
        \E \left[ e^{c|V^1(t)|^2} |\bV_0 \right] &\, = \sum_{k=0}^\infty \frac{c^k}{k!} m_k(t)\\
        &\, \leq e^{c|V^1(0)|^2} +\sum_{k=1}^\infty \frac{(cke_N)^k}{k!} \left( 1+\frac{k-1}{N} \right)^{-k}.
    \end{align*}
    We take expectation on both sides. The first term on the right-hand side is bounded by the initial assumption. It remains to control the series
    \begin{equation*}
        \sum_{k=1}^\infty \E \left[ \frac{(cke_N)^k}{k!} \left( 1+\frac{k-1}{N} \right)^{-k} \right].
    \end{equation*}
    To this end, note that for \(0 \leq s \leq 1\) and \(y \geq 0\), we have
    \begin{equation*}
        e^{sy} \leq 1-s+se^y
    \end{equation*}
    by convexity of the exponential function. Hence, for any \(0 \leq \ell \leq N\),
    \begin{align*}
        \E \left[ e^{\ell \sigma_0 e_N} \right] &\, = \left( \E \left[ e^{\frac{\ell}{N}\sigma_0 |V^1(0)|^2} \right] \right)^N \\
        &\, \leq \left( 1+\frac{\ell}{N} (K_0-1) \right)^N \leq \exp \left( \ell(K_0-1) \right).
    \end{align*}
    Using the elementary bound \(y^k \leq k! e^{\ell y} \ell^{-k}\), we deduce that
    \begin{equation*}
        \E \left[ (\sigma_0 e_N)^k \right] \leq \exp \left( \ell (K_0-1) \right) k! \ell^{-k},\quad 0 \leq \ell \leq N.
    \end{equation*}
    Therefore, for each $k \geq 1$, we bound the expectation by
    \begin{equation}\label{eq:series_bound} 
        \E \left[ \frac{(cke_N)^k}{k!} \left( 1+\frac{k-1}{N} \right)^{-k} \right] \leq \exp \left( \ell(K_0-1) \right) \left( \frac{ck}{\ell \sigma_0} \right)^k \left(1+\frac{k-1}{N} \right)^{-k}.
    \end{equation}
    Now we need to determine the choice of $\ell$ according to the value of $k$. We split into two cases.

    \noindent\textbf{Case 1: \(k \leq N(K_0-1)\).}
    Choose \(\ell=\frac{k}{K_0-1} \leq N\). From \eqref{eq:series_bound}, we have
    \begin{equation*}
        \E \left[ \frac{(cke_N)^k}{k!} \left( 1+\frac{k-1}{N} \right)^{-k} \right] \leq \left( \frac{ce(K_0-1)}{\sigma_0} \right)^k.
    \end{equation*}

    \noindent\textbf{Case 2: \(k> N(K_0-1)\).}
    Choose \(\ell=N\). Again by \eqref{eq:series_bound}, we have
    \begin{align*}  
        \E \left[ \frac{(cke_N)^k}{k!} \left( 1+\frac{k-1}{N} \right)^{-k} \right] &\, \leq e^{N(K_0-1)} \left( \frac{ck}{N\sigma_0} \right)^k \left( 1+\frac{k-1}{N} \right)^{-k} \\
        &\, \leq e^{N(K_0-1)} \left( \frac{c}{\sigma_0} \right)^k.
    \end{align*}
    Since \(k>N(K_0-1)\), we have \(e^{N(K_0-1)}<e^k\), so the final bound is
    \begin{equation*}
        \E \left[ \frac{(cke_N)^k}{k!} \left( 1+\frac{k-1}{N} \right)^{-k} \right] \leq \left( \frac{ce}{\sigma_0} \right)^k.
    \end{equation*}

    With the two bounds above, it suffices to choose $c=a_0$ appropriately to make the series converge. An explicit choice is given by
    \begin{equation*}
        a_0=\frac{\sigma_0}{2e(K_0+1)}.
    \end{equation*}
    Then obviously we have
    \begin{equation*}
        \frac{a_0e(K_0-1)}{\sigma_0}=\frac{K_0-1}{2(K_0+1)} \leq \frac{1}{2}, \quad \frac{a_0e}{\sigma_0}= \frac{1}{2(K_0+1)} \leq \frac{1}{2}.
    \end{equation*}
    Hence the contribution of each case is bounded by a geometric series:
    \begin{equation*}
        \sum_{1 \leq k \leq N(K_0-1)} \E \left[ \frac{(a_0ke_N)^k}{k!} \left( 1+\frac{k-1}{N} \right)^{-k} \right] \leq \sum_{k \geq 1} 2^{-k} \leq 1,
    \end{equation*}
    and similarly
    \begin{equation*}
        \sum_{k> N(K_0-1)} \E \left[ \frac{(a_0ke_N)^k}{k!} \left( 1+\frac{k-1}{N} \right)^{-k} \right] \leq \sum_{k \geq 1} 2^{-k} \leq 1.
    \end{equation*}
    Therefore, we conclude by
    \begin{equation*}
        \sum_{k=1}^\infty \E \left[ \frac{(a_0ke_N)^k}{k!} \left( 1+\frac{k-1}{N} \right)^{-k} \right] \leq 2.
    \end{equation*}
    It remains to estimate the initial term. Since \(a_0 \leq \sigma_0\), the same convexity inequality gives
    \begin{equation*}
        \E \left[ e^{a_0|V^1(0)|^2} \right]= \E \left[ e^{\frac{a_0}{\sigma_0} \sigma_0|V^1(0)|^2} \right] \leq 1+\frac{a_0}{\sigma_0}(K_0-1) \leq 2.
    \end{equation*}
    Combining the previous bounds together, we finally obtain
    \begin{equation*}
        \sup_{t \geq 0} \E \left[ e^{a_0|V^1(t)|^2} \right] \leq 2+2=4.
    \end{equation*}
    This completes the proof.
\end{proof}

We now complete our estimate of the second term, which is
\begin{equation*}
    -\frac{1}{N} \int_{\R^{3N}} F_N \log \frac{f^{\otimes N}}{\gamma_N} \ud V.
\end{equation*}
By symmetry, it is equal to
\begin{equation*}
    -\int_{\R^3} F_N^1 \log \frac{f}{\gamma} \ud v.
\end{equation*}
We will prove the following estimate.

\begin{lem}\label{lem:cross_term_decay}
    Under the assumptions of Theorem \ref{the:time_uniform}, there exists a constant \(C_0>0\), depending only on the initial bounds in \eqref{eq:initial-gaussian-bounds} and \eqref{eq:relative_L2_initial}, such that for all \(t \geq 0\),
    \begin{equation*}
        -\int_{\R^3} F_N^1 \log \frac{f}{\gamma} \ud v \leq C_0 e^{-C_0^{-1}t}.
    \end{equation*}
\end{lem}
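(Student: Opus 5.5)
We need to bound $-\int F_N^1 \log(f_t/\gamma)\,\ud v$. The plan is to use that $f_t$ converges exponentially fast to $\gamma$ in the weighted space $L^2(\gamma^{-1})$. On the set where $f_t/\gamma$ is close to one, $\log(f_t/\gamma)$ is small. Elsewhere, the Gaussian lower bound \eqref{eq:initial-gaussian-bounds} gives $-\log(f_t/\gamma)\le C(1+|v|^2)$, and this is integrable against $F_N^1$ uniformly in $N,t$ by Proposition \ref{prop:landau_exponential_moments}.

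\textbf{Step 1 (linearized decay).} Write $h_t=f_t/\gamma-1$. For Maxwellian molecules the Landau equation is explicit, as in \eqref{eq:landau_rotation_decomposition}: the operator is $L=B(t):\nabla^2+\sum_{p<q}\Omega_{pq}^2+2v\cdot\nabla+6$, and $B(t)=2\Id+O(e^{-12t})$. We split $L=L_\infty+(B(t)-2\Id):\nabla^2$. Here $L_\infty$ is twice the Fokker--Planck operator plus the rotation part $\sum\Omega_{pq}^2$. The rotation part is symmetric in $L^2(\gamma^{-1})$, since $\Omega_{pq}$ commutes with radial weights and is antisymmetric, so it is nonpositive. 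The Fokker--Planck part has a spectral gap in $L^2(\gamma^{-1})$. The perturbation $(B(t)-2\Id):\nabla^2$ has trace zero and size $O(e^{-12t})$. Its contribution to $\tfrac{d}{dt}\int h^2\gamma$ can be absorbed into the Fokker--Planck dissipation up to an error $Ce^{-12t}\int h^2\gamma$. Grönwall's inequality then gives
\begin{equation*}
\int h_t^2\gamma\,\ud v\le C A_0 e^{-ct}.
\end{equation*}
Mass and momentum conservation remove the kernel. The energy mode is also fixed by the normalization.

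\textbf{Step 2 (splitting the integral).} Fix a threshold and write $-\log(1+h)\le -h+Ch^2$ on $\{|h|\le 1/2\}$. On the complement we use $-\log(f/\gamma)\le C(1+|v|^2)$. Then
\begin{equation*}
-\int F_N^1\log\frac f\gamma \le \int F_N^1 |h| + C\int F_N^1 h^2 + C\int_{\{|h|>1/2\}} F_N^1(1+|v|^2).
\end{equation*}
Each term is controlled by Cauchy--Schwarz in the form
\begin{equation*}
\int F_N^1 |h| \le \Big(\int h^2\gamma\Big)^{1/2}\Big(\int (F_N^1)^2\gamma^{-1}\Big)^{1/2}.
\end{equation*}
This requires $F_N^1\in L^2(\gamma^{-1})$ uniformly, which is not available. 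We therefore instead use a pointwise bound on $h$ obtained from Gaussian-type upper and lower bounds on $f_t$.

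\textbf{Step 3 (pointwise decay).} Parabolic smoothing of $L$, or interpolating the $L^2(\gamma^{-1})$ decay with uniform bounds on $\nabla^k\log f$ from Theorem \ref{the:logarithmic_derivative}, should give $|h_t(v)|\le Ce^{-ct}e^{\kappa|v|^2}$ for a small $\kappa$. We take $\kappa<a_0/2$, with $a_0$ from Proposition \ref{prop:landau_exponential_moments}. Using $|\log(1+h)|\le C|h|(1+|v|^2)$, which follows from both Gaussian bounds, we obtain
\begin{equation*}
-\int F_N^1\log\frac f\gamma\le Ce^{-ct}\int F_N^1 e^{2\kappa|v|^2}\ud v\le 4Ce^{-ct}.
\end{equation*}

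\textbf{Main obstacle.} The hard part is the pointwise estimate of Step 3. Exponential $L^2(\gamma^{-1})$ decay does not control the logarithm pointwise in the tails, and the paper's logarithmic derivative bounds grow in $t$. I would first try Bernstein/maximum-principle comparisons for $f_t/\gamma$ against $1\pm Ce^{-ct}e^{\kappa|v|^2}$. A simpler fallback is a weak-in-$v$ route: interpolate the $L^2$ decay with the exponential moments, accepting a slightly weaker exponential rate.
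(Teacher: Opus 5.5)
There is a genuine gap. Your whole argument rests on the pointwise decay $|h_t(v)|\le Ce^{-ct}e^{\kappa|v|^2}$ with $\kappa<a_0/2$ in Step 3. You only assert that it ``should'' follow, and nothing in the paper supplies it. The bounds of Theorem \ref{the:logarithmic_derivative} grow in $t$ and by themselves give no decay of $f/\gamma-1$. Moreover, with $\kappa<a_0/2$ the bound cannot hold down to $t=0$ in general, since the hypotheses allow $f_0/\gamma$ to grow like $e^{\beta|v|^2}$ with $\beta>a_0/2$. Your fallback of interpolating the $L^2(\gamma^{-1})$ decay with the exponential moments of $F_N^1$ also fails as stated. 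Exponential moments only control the tails of $F_N^1$. They do not prevent $F_N^1$ from concentrating, uniformly in $N$, on a set where $|h_t|$ is large. To pair $F_N^1$ with a function known only to be small in $L^2(\gamma)$, you need an $L^p$ bound on $F_N^1$ for some $p>1$, uniform in $N$ and $t$, and your proposal never produces one.

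The paper supplies exactly these two missing ingredients: a truncation at a radius growing with $t$, and the Fisher information. On $\{|v|<R\}$, the Gaussian lower bound \eqref{eq:initial-gaussian-bounds} and the mean value theorem give $|\log f-\log\gamma|\le Ce^{c_0^{-1}R^2}|f-\gamma|$. Writing $|f-\gamma|=|h|\gamma$, Cauchy--Schwarz then puts the weight $\gamma$, not $\gamma^{-1}$, on $F_N^1$:
\begin{equation*}
\int_{|v|<R}F_N^1|f-\gamma|\ud v\le\Big(\int_{\R^3}(F_N^1)^2\gamma\ud v\Big)^{1/2}\Big(\int_{\R^3}h^2\gamma\ud v\Big)^{1/2}.
\end{equation*}
The first factor is at most $\|F_N^1\|_{L^2}\le\|F_N^1\|_{L^3}^{3/4}\le CI(F_N^1)^{3/4}\le CI(f_0)^{3/4}$. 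This uses Sobolev for $\sqrt{F_N^1}$, the inequality $I(F_N^1)\le I(F_N)/N$, and monotonicity of the Fisher information along Kac's system. The second factor decays like $e^{-2t}$: the paper cites \cite{caja2025contractivity} here, and your Step 1 is a plausible self-contained substitute. The tail $\{|v|\ge R\}$ is handled as you propose, via $|\log(f/\gamma)|\le C(1+|v|^2)$ and Proposition \ref{prop:landau_exponential_moments}, giving $Ce^{-a_0R^2/2}$. The choice $R^2=c_0t+1$ then balances $e^{c_0^{-1}R^2}e^{-2t}$ against $e^{-a_0R^2/2}$.

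The same truncation would also rescue your Step 3. On $\{|v|<R\}$, any bound $|h_t(v)|\le C(1+t)e^{-ct}e^{K|v|^2}$ with an arbitrary fixed $K$ suffices once you take $R^2$ proportional to $t/K$. Such a bound can be obtained by interpolating the local $L^2$ decay on unit balls with the gradient bound $|\nabla h_t|=(f_t/\gamma)|\nabla\log f_t+v|$, using Theorem \ref{the:logarithmic_derivative} and $\log f_t\le C$.
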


\begin{proof}
    We split the integral into two parts according to the value of $v$:
    \begin{equation*}
        -\int_{\R^3} F_N^1 \log \frac{f}{\gamma} \ud v= -\int_{|v| \geq R} F_N^1 \log \frac{f}{\gamma} \ud v -\int_{|v|<R} F_N^1 \log \frac{f}{\gamma} \ud v,
    \end{equation*}
    where \(R \ge 1\) will be determined later. From \eqref{eq:initial-gaussian-bounds} and standard well-posedness results (see for instance \cite{villani1998spatially}), we have $-C-C|v|^2 \leq \log f \leq C$ for any $t$, which implies,
    \begin{equation*}
        \left| \log \frac{f}{\gamma}(v) \right| \leq C(1+|v|^2).
    \end{equation*}
    By the initial assumption \eqref{con:gaussian-upper-bound}, we can apply Proposition \ref{prop:landau_exponential_moments} to obtain the tail estimate
    \begin{align}\label{ineq:FfR_rewrite}
        \left| \int_{|v| \geq R} F_N^1 \log \frac{f}{\gamma} \ud v \right| &\, \leq C \int_{|v|\geq R} |v|^2 F_N^1 \ud v \nonumber\\
        &\, \leq \frac{2C}{a_0} e^{-a_0R^2/2} \int_{|v| \geq R} e^{a_0|v|^2} F_N^1 \ud v \leq \frac{8C}{a_0} e^{-a_0R^2/2}.
    \end{align}

    We next estimate the local term. Since \(f \geq c_0 e^{-c_0^{-1}|v|^2}\) for some \(c_0\in(0,1)\), for any \(R \geq 1\) and \(|v| \leq R\), we have
    \begin{equation*}
        \left| \log f(v)-\log \gamma(v) \right| \leq e^{c_0^{-1}R^2} |f(v)-\gamma(v)|.
    \end{equation*}
    Consequently, by the Cauchy--Schwarz inequality, we have
    \begin{align}
        -\int_{|v|<R} F_N^1 \log \frac{f}{\gamma} \ud v &\, \leq e^{c_0^{-1}R^2} \int_{|v|<R} F_N^1 \left| \frac{f-\gamma}{\gamma} \right| \gamma \ud v \nonumber\\
        &\, \leq e^{c_0^{-1}R^2} \left( \int_{\R^3} (F_N^1)^2 \gamma \ud v \right)^{1/2} \left( \int_{\R^3} \left| \frac{f-\gamma}{\gamma} \right|^2 \gamma \ud v \right)^{1/2}.
    \label{ineq:Ffgamma_rewrite}
    \end{align}
    It remains to bound the two factors on the right-hand side. Applying the fact that $\gamma \leq 1$, the Sobolev embedding applied to $\sqrt{F_N^1}$ and the monotonicity of the Fisher information, we get
    \begin{equation*}
        \int_{\R^3} (F_N^1)^2 \gamma \ud v \leq \|F_N^1\|_{L^2}^2 \leq \|F_N^1\|_{L^3}^{3/2} \leq CI(F_N^1)^{3/2} \leq CI(f_0)^{3/2}.
    \end{equation*}
    For the second factor, we recall that \eqref{eq:relative_L2_initial} gives such a bound for the initial data, and by \cite[Theorem 1]{caja2025contractivity} we get
    \begin{equation*}
        \int_{\R^3} \left| \frac{f-\gamma}{\gamma} \right|^2 \gamma \ud v \leq C_0 e^{-4t},
    \end{equation*}
    where $C_0$ depends only on the initial bound in \eqref{eq:relative_L2_initial}. Substituting these two bounds into \eqref{ineq:Ffgamma_rewrite}, and absorbing polynomial factors into the exponential, we obtain
    \begin{equation*}
        -\int_{|v|<R} F_N^1 \log \frac{f}{\gamma} \ud v \leq C_0 e^{c_0^{-1}R^2} e^{-2t}.
    \end{equation*}
    Together with \eqref{ineq:FfR_rewrite}, this gives, for any \(R \geq 1\),
    \begin{equation*}
        -\int_{\R^3} F_N^1 \log \frac{f}{\gamma} \ud v \leq C_0 e^{c_0^{-1}R^2} e^{-2t}+ \frac{8C}{a_0} e^{-a_0 R^2/2}.
    \end{equation*}
    We now optimize in $R$ by taking \(R^2=c_0 t+1\). Then we have
    \begin{equation*}
        -\int_{\R^3} F_N^1 \log \frac{f}{\gamma} \ud v \leq C_0 e^{-t}+ \frac{8C}{a_0} e^{-a_0c_0 t/2}.
    \end{equation*}
    After enlarging the constant \(C_0\), this completes the proof.
\end{proof}

Combining Lemma \ref{lem:FN_gammaN_uniform} and Lemma \ref{lem:cross_term_decay}, and bounding the exponential term by a negative power, we have proved Theorem \ref{the:time_uniform}.

\begin{proof}[Proof of Corollary \ref{cor:uniform-in-time-poc}]
    Now we prove Corollary \ref{cor:uniform-in-time-poc}.
    
    {\bf Step 1: Interpolation.} Applying Theorem \ref{the:finite-time-entropy} with  $T= N^{\varepsilon/4}-1$, we obtain
    \begin{equation*}
        \sup_{0 \leq t \leq N^{\varepsilon/4}-1} H_N \left( F_N(t)| f_t^{\otimes N} \right) \leq C_\varepsilon N^{-1/2+\varepsilon}.
    \end{equation*}
    On the other hand, applying Theorem \ref{the:time_uniform} with $\delta=2/\varepsilon>0$, we get 
    \begin{equation*}
        \sup_{t \geq N^{\varepsilon/4}-1} H_N \left( F_N(t)| f_t^{\otimes N} \right) \leq C_\varepsilon N^{-1/2}.
	\end{equation*}
    Combining the preceding two estimates and enlarging the constant if necessary, we immediately get \eqref{ineq: entropy uni}.

    {\bf Step 2: Proof of Wasserstein-1 Talagrand inequality.} For any $x,y \in \R^{3N}$, we define the additive product distance
    \begin{equation*}
        d_N(x,y)=\sum_{i=1}^N|x_i-y_i|_{\mathbb R^3},
    \end{equation*}
    and let $\mathbf W_{1}$ denote the Wasserstein-$1$ distance associated with $d_N$. In this step, we are going to prove a Talagrand $T_1$ type inequality, namely 
    \begin{equation*}
        N^{-2 }\mathbf{W}^2_1 \left( F_N(t),f_t^{\otimes N} \right) \leq C H_N \left( F_N(t)| f_t^{\otimes N} \right) \leq C_\varepsilon N^{-1/2+\varepsilon}.
    \end{equation*}
    We first establish a one-particle exponential estimate. Let $h: \R^3 \to \R$ be any $1$-Lipschitz test function. By the initial condition \eqref{con:gaussian-upper-bound} and \cite[Section 6]{villani1998spatially}, there exists $\sigma_0>0$ depending only on $L_0$ such that
    \begin{equation*}
        \int_{\R^3} e^{\sigma_0|v|^2} f_t(v) \ud v \leq C
    \end{equation*}
    uniformly in $t$. We claim that there exists $C_0>0$, depending only on $\sigma_0$, such that
    \begin{equation*}
        \log \int_{\mathbb R^3} e^{\lambda(h- \int h f_t)}\,f_t(v) \ud v \leq C_0 \lambda^2
    \end{equation*}
    for every $\lambda \in \R$. Indeed, we sample $X,Y$ as independent random variables with law $f_t$.

    {\bf Case 1:} If $\lambda^2 \leq \sigma_0$, by Jensen's inequality, we have
    \begin{equation*}
        \int_{\R^3} e^{\lambda (h-\int h f_t)} f_t(v) \ud v = \E \left[ e^{ \E [\lambda (h(X)-h(Y))|X]} \right] \leq \E \left[ e^{\lambda (h(X)-h(Y))} \right].
    \end{equation*}
    By symmetry, we can also exchange $X$ and $Y$ and apply $\cosh x \leq e^{x^2/2}$ to get
    \begin{equation*}
        \leq \E \left[ \cosh \left( \lambda (h(X)-h(Y)) \right) \right] \leq \E \left[ e^{\frac{\lambda^2}{2}(h(X)-h(Y))^2} \right].
    \end{equation*}
    Since $h$ is $1$-Lipschitz, we further bound it by
    \begin{equation*}
        \E \left[ e^{\frac{\lambda^2}{2}|X-Y|^2} \right] \leq \E \left[ e^{\lambda^2 (|X|^2+|Y|^2)} \right] = \left( \E \left[ e^{\lambda^2 |X|^2} \right] \right)^2 \leq \E \left[ e^{\sigma_0|X|^2} \right]^{2\lambda^2/\sigma_0}.
    \end{equation*}
    This gives the claim.

    {\bf Case 2:} If $\lambda^2\ge\sigma_0$, since $h$ is $1$-Lipschitz, we write
    \begin{equation*}
        \left| h(v)-\int_{\R^3} h(w) f_t(w) \ud w \right| \leq |v|+\int_{\R^3} |w|f_t(w) \ud w.
    \end{equation*}
    Using the elementary inequality $|\lambda||v| \leq \sigma_0 |v|^2+\frac{\lambda^2}{4\sigma_0}$, we obtain
    \begin{equation*}
        \log \int_{\R^3} e^{\lambda (h-\int h f_t)} f_t(v) \ud v \leq |\lambda| \int_{\R^3} |v| f_t(v) \ud v +\frac{\lambda^2}{4\sigma_0} +\log \int_{\R^3} e^{\sigma_0|v|^2} f_t(v) \ud v \leq C_0 \lambda^2,
    \end{equation*}
    which gives the claim.

    We now adapt a tensorized version of the previous claim. Let $X_1,\cdots,X_N$ be independent random variables with common law $f_t$, and let $\Phi:\R^{3N} \to \R$ be $1$-Lipschitz with respect to $d_N$. For each $1 \leq i \leq N$, we define
    \begin{align*}
        \mathcal F_i= \sigma(X_1,\cdots,X_i), \quad S_i= \E \left[ \Phi(X_1,\cdots,X_N)|\mathcal F_i \right], \quad D_i= S_i-S_{i-1},
    \end{align*}
    For fixed $X_1,\ldots,X_{i-1}$, we also define
    \begin{equation*}
        h_i(v)=\int_{(\R^3)^{N-i}} \Phi (X_1, \cdots, X_{i-1}, v, v_{i+1}, \cdots, v_N) f_t^{\otimes(N-i)} \ud v_{i+1} \cdots \ud v_N.
    \end{equation*}
    Since $\Phi$ is $1$-Lipschitz with respect to $d_N$, we know $h_i$ is also $1$-Lipschitz. Therefore, by the previous claim, we have
    \begin{equation*}
        \E \left[ e^{\lambda D_i}|\mathcal F_{i-1} \right] =\int_{\R^3} e^{\lambda( h_i(v)-\int_{\R^3} h_i f_t \ud v)} f_t \ud v \leq e^{C_0\lambda^2}.
    \end{equation*}
    Iterating this conditional estimate yields
    \begin{equation*}
        \log \int_{\R^{3N}} e^{\lambda \left( \Phi-\int \Phi f_t^{\otimes N} \right)} f_t^{\otimes N} \ud V \leq C_0 N\lambda^2.
    \end{equation*}

    Now we prove our desired Talagrand inequality. By Kantorovich duality, it suffices to estimate
    \begin{equation*}
        \int_{\R^{3N}} \Phi (F_N(t)-f_t^{\otimes N}) \ud V
    \end{equation*}
    for any $1$-Lipschitz test function $\Phi$. Using the Donsker--Varadhan variational inequality, we obtain
    \begin{equation*}
        \int_{\R^{3N}} \Phi (F_N(t)-f_t^{\otimes N}) \ud V \leq \frac{N}{\eta} H_N \left( F_N(t)| f_t^{\otimes N} \right)+ \frac{1}{\eta} \log \int_{\R^{3N}} e^{\eta \left( \Phi-\int \Phi f_t^{\otimes N} \right)} f_t^{\otimes N} \ud V.
    \end{equation*}
    We take $\eta=\sqrt{ H_N \left( F_N(t)| f_t^{\otimes N} \right)}$. Combining with the previous estimate, we get
    \begin{equation*}
        \int_{\R^{3N}} \Phi (F_N(t)-f_t^{\otimes N}) \ud V \leq C_0 N \sqrt{H_N \left( F_N(t)| f_t^{\otimes N} \right)}.
    \end{equation*}
    Since this inequality holds for any $1$-Lipschitz test function $\Phi$ and $C_0$ is independent of $\Phi$, we conclude our desired Talagrand type inequality.

    {\bf Step 3: Conclusion.} We now proceed to prove \eqref{ineq: was2}. For each $t \geq 0$, let
    \begin{equation*}
        \mathbf{Z}(t)=\left( Z^1(t), \cdots, Z^N(t) \right), \quad \mathbf{W}(t)=\left( W^1(t), \cdots, W^N(t) \right)
    \end{equation*}
	be an optimal coupling of $F_N(t)$ and $f_t^{\otimes N}$ in $\mathbf{W}_1$. Thus by definition, $\mathbf{Z}(t) \overset{\text{law}}{=} \mathbf{V}(t)$ and
    \begin{equation*}
        \E \left| \mathbf{Z}(t)-\mathbf{W}(t) \right| =\mathbf{W}_{1} \left( F_N(t),f_t^{\otimes N} \right).
    \end{equation*}
    Using the triangle inequality, we get
    \begin{align*}
        \E \left[ \mathcal{W}_1 \left( N^{-1} \sum_{i=1}^N \delta_{V^i(t)},f_t \right) \right]= &\, \E \left[ \mathcal{W}_1 \left( N^{-1} \sum_{i=1}^N \delta_{Z^i(t)},f_t \right) \right] \\
        \leq &\,  \E \left[ \mathcal{W}_1 \left( N^{-1} \sum_{i=1}^N \delta_{Z^i(t)},N^{-1} \sum_{i=1}^N \delta_{W^i(t)} \right) \right]+\E \left[ \mathcal{W}_1 \left( N^{-1} \sum_{i=1}^N \delta_{W^i(t)},f_t \right) \right].
    \end{align*}
    The first term can be controlled directly by
    \begin{align*}
        \E \left[ \mathcal{W}_1 \left( N^{-1} \sum_{i=1}^N \delta_{Z^i(t)},N^{-1} \sum_{i=1}^N \delta_{W^i(t)} \right) \right] &\, \leq N^{-1} \sum_{i=1}^N \E |Z^i(t)-W^i(t)|\\
        &\, = N^{-1} \mathbf{W}_1 \left( F_N(t),f_t^{\otimes N} \right) \leq C_0 N^{-1/4+\varepsilon}.
    \end{align*}
    For the second term, since $W^1(t),\cdots,W^N(t)$ are i.i.d. with common law $f_t$, by the classical result of Fournier--Guillin \cite[Theorem 1]{fournier2014particle}, we have
    \begin{equation*}
        \E \left[ \mathcal{W}_1 \left( N^{-1} \sum_{i=1}^N \delta_{W^i(t)},f_t \right) \right] \leq CN^{-1/3}.
    \end{equation*}
    This finishes the proof.
\end{proof}

\subsection*{Acknowledgments}  This work was essentially completed while Chenguang Liu was visiting the Beijing International Center for Mathematical Research (BICMR), Peking University. Chenguang Liu would like to thank BICMR for its hospitality. The work of Xuanrui Feng and Zhenfu Wang was partially supported by the National Key R\&D Program of China (Project No.~2024YFA1015500) and the NSFC (Grant Nos.~12595282 and 12171009).

\bibliography{ref}
\bibliographystyle{abbrv}

\end{document}